\documentclass[11pt]{article}
\usepackage{latexsym,amsfonts,amssymb,amsmath,amsthm}
\usepackage{graphicx}

\usepackage[usenames,dvipsnames]{color}
\usepackage{ulem}

\usepackage{color}

\parindent 0.5cm
\evensidemargin 0cm \oddsidemargin 0cm \topmargin 0cm \textheight 22cm \textwidth 16.2cm \footskip 2cm \headsep
0cm

\begin{document}

\newtheorem{tm}{Theorem}[section]
\newtheorem{prop}[tm]{Proposition}
\newtheorem{defin}[tm]{Definition}
\newtheorem{coro}[tm]{Corollary}
\newtheorem{lem}[tm]{Lemma}
\newtheorem{assumption}[tm]{Assumption}
\newtheorem{rk}[tm]{Remark}
\newtheorem{nota}[tm]{Notation}
\numberwithin{equation}{section}

\newcommand{\stk}[2]{\stackrel{#1}{#2}}
\newcommand{\dwn}[1]{{\scriptstyle #1}\downarrow}
\newcommand{\upa}[1]{{\scriptstyle #1}\uparrow}
\newcommand{\nea}[1]{{\scriptstyle #1}\nearrow}
\newcommand{\sea}[1]{\searrow {\scriptstyle #1}}
\newcommand{\csti}[3]{(#1+1) (#2)^{1/ (#1+1)} (#1)^{- #1
 / (#1+1)} (#3)^{ #1 / (#1 +1)}}
\newcommand{\RR}[1]{\mathbb{#1}}

\newcommand{\rd}{{\mathbb R^d}}
\newcommand{\ep}{\varepsilon}
\newcommand{\rr}{{\mathbb R}}
\newcommand{\alert}[1]{\fbox{#1}}
\newcommand{\eqd}{\sim}
\def\p{\partial}
\def\R{{\mathbb R}}
\def\N{{\mathbb N}}
\def\Q{{\mathbb Q}}
\def\C{{\mathbb C}}
\def\l{{\langle}}
\def\r{\rangle}
\def\t{\tau}
\def\k{\kappa}
\def\a{\alpha}
\def\la{\lambda}
\def\De{\Delta}
\def\de{\delta}
\def\ga{\gamma}
\def\Ga{\Gamma}
\def\ep{\varepsilon}
\def\eps{\varepsilon}
\def\si{\sigma}
\def\Re {{\rm Re}\,}
\def\Im {{\rm Im}\,}
\def\E{{\mathbb E}}
\def\P{{\mathbb P}}
\def\Z{{\mathbb Z}}
\def\D{{\mathbb D}}
\newcommand{\ceil}[1]{\lceil{#1}\rceil}

\title{Global existence and asymptotic behavior  of classical solutions to a parabolic-elliptic chemotaxis system with  logistic source on $\mathbb{R}^{N}$}

\author{
Rachidi Bolaji Salako and Wenxian Shen  \\
Department of Mathematics and Statistics\\
Auburn University\\
Auburn University, AL 36849\\
U.S.A. }

\date{}
\maketitle

\begin{abstract}
In the current paper, we consider the following parabolic-elliptic semilinear Keller-Segel model on $\mathbb{R}^{N}$,$$
\begin{cases}u_{t}=\nabla\cdot(\nabla{u}-\chi u\nabla{v})+au-bu^{2}, \quad x\in\mathbb{R}^N,\,\,{t}>0,\\{0}=(\Delta-I)v+u,\quad x\in\mathbb{R}^N,\,\, t>0,\end{cases}$$where $\chi>0,\ a\geq0,\  b>0$ are constant real numbers and $N$ is a positive integer.  We first prove the local existence and uniqueness of classical solutions $(u(x,t;u_0),v(x,t;u_0))$ with $u(x,0;u_0)=u_0(x)$ for various initial functions $u_0(x)$. Next, under some conditions on the constants $a, b, \chi$ and the dimension $N$, we prove the global existence and boundedness of classical solutions $(u(x,t;u_0),v(x,t;u_0))$ for given initial functions $u_0(x)$.  Finally, we investigate the asymptotic behavior of the global solutions with strictly positive initial functions or nonnegative compactly supported initial functions. Under some conditions on the constants $a, b, \chi$ and the dimension $N$, we show that for every strictly positive initial function $u_0(\cdot)$,$$\lim_{t\to\infty}\sup_{x\in\mathbb{R}^N}\big[|u(x,t;u_0)-\frac{a}{b}|+|v(x,t;u_0)-\frac{a}{b}|\big]=0,$$ and that for every nonnegative initial function $u_0(\cdot)$ with non-empty and compact support ${\rm{supp}}(u_0)$, there are $0<c_{\rm{low}}^*(u_0)\leq{c}_{\rm{up}}^*(u_0)<\infty$ such that $$\lim_{t\to\infty}\sup_{|x|\leq{ct}}\big[|u(x,t;u_0)-\frac{a}{b}|+|v(x,t;u_0)-\frac{a}{b}|\big]=0\quad \forall\,\,{0}<c<c_{\rm{low}}^*(u_0)
 $${and}$$\lim_{t\to\infty}\sup_{|x|\geq{ct}}\big[u(x,t;u_0)+v(x,t;u_0)\big]=0\quad\forall\,\,{c}>c_{\rm{up}}^*(u_0).$$
\end{abstract}

\medskip
\noindent{\bf Key words.} Parabolic-elliptic chemotaxis system, logistic source, classical solution, local existence, global existence, asymptotic behavior.

\medskip
\noindent {\bf 2010 Mathematics Subject Classification.} 35B35, 35B40, 35K57, 35Q92, 92C17.

\section{Introduction and the Statements of Main results}
The movements of many mobile species are influenced by certain chemical substances. Such movements are referred to
chemotaxis.
The origin of  chemotaxis models was introduced by Keller and Segel  (see \cite{KeSe1}, \cite{KeSe2}). The following is a general Keller-Segel model
for the time evolution of both the density $u(x,t)$ of a mobile species and the density $v(x,t)$ of a  chemoattractant,
\begin{equation}\label{IntroEq0}
\begin{cases}
u_{t}=\nabla\cdot (m(u)\nabla u- \chi(u,v)\nabla v) + f(u,v),\quad   x\in\Omega,\,\,\, t>0 \\
\tau v_t=\Delta v + g(u,v),\quad  x\in\Omega,\, \,\, t>0
\end{cases}
\end{equation}
complemented with certain boundary condition on $\p\Omega$ if $\Omega$ is bounded, where $\Omega\subset \R^N$ is an open domain, $\tau\ge 0$ is a non-negative constant linked to the speed of diffusion of the chemical, the function $\chi(u,v)$ represents  the sensitivity with respect to chemotaxis,  and the functions $f$ and $g$ model the growth of the mobile species and the chemoattractant, respectively.

In the last two decades, considerable progress has been made in  the analysis of various particular cases of (\ref{IntroEq0}) on both bounded and unbounded domains. Among the central problems are the existence  of nonnegative solutions of (\ref{IntroEq0}) which are globally defined in time or blow up at a finite time and  the asymptotic  behavior of time global solutions. The features of solutions of (\ref{IntroEq0}) depend on the geometric properties of the functions $m(u)$,  $\chi(u,v)$,
$f(u,v)$, and $g(u,v)$.

When $\tau>0$, \eqref{IntroEq0} is referred to as the parabolic-parabolic semilinear Keller-Segel model. In this case, when \eqref{IntroEq0} is coupled with Neumann boundary condition on bounded domain, several results have been established for different choices of the functions $m(u)$,  $\chi(u,v)$,
$f(u,v)$, and $g(u,v)$. For example when $\tau=1$, $m(u)=1$, $\chi(u,v)=\chi u$, $g(u,v)=u-v$,  $f(u,v)=u(a-bu)$,  and $\frac{b}{\chi}$ is sufficiently large,  it is shown in \cite{win_JDE2014} that  unique global classical solution exists for  every nonnegative initial data $(u_{0},v_0)\in C^{0}(\overline{\Omega})\times W^{1,\infty}(\Omega)$ and that the constant solution $(\frac{a}{b},\frac{a}{b})$ is asymptotically stable. See also \cite{OTYM2002}, \cite{win_CPDE2010} for the study of boundedness and global existence
of classical solutions when $b$ is large. When $b$ is small, among others, Lankeit in \cite{lankeit_eventual} proved the existence of at least one global weak solution with given initial functions.
   The reader is referred to \cite{BBTW} for a recent survey.

In the present paper, we restrict ourselves to  the case that $\tau=0$, which is supposed to model the situation when the chemoattractant diffuses very quickly. System \eqref{IntroEq0} with $\tau=0$ reads as
\begin{equation}\label{IntroEq0-1}
\begin{cases}
u_{t}=\nabla\cdot (m(u)\nabla u- \chi (u,v)\nabla v) + f(u,v),\quad  x\in\Omega,\,\,\, t>0 \\
0=\Delta v + g(u,v),\quad  x\in\Omega,\, \,\, t>0
\end{cases}
\end{equation}
complemented with certain boundary condition on $\p\Omega$ if $\Omega$ is bounded.

Global existence and asymptotic behavior of solutions of \eqref{IntroEq0-1} on bounded domain $\Omega$ complemented with
Neumann boundary conditions,
\begin{equation}
\label{neumann-cond}
\frac{\p u}{\p n}=\frac{\p v}{\p n}=0\ \,\, \text{for}\ \,\, x\in\p \Omega,
\end{equation}
has been studied in many papers. For example, in \cite{TeWi}, the authors studied \eqref{IntroEq0-1}+\eqref{neumann-cond} with
 $m(u)\equiv 1$, $\chi(u,v)=\chi u$, $f(u,v)=au-bu^2$, which is referred to as the logistic source in literature,
 and $g(u,v)=u-v$, where $\chi$, $a$, and $b$ are positive constants. Among others,  the following are proved in \cite{TeWi},

 \medskip

 \noindent $\bullet$ If either $N\le 2$ or $b>\frac{N-2}{N}\chi$, then for any initial data $u_0\in C^{0,\alpha}(\bar\Omega)$ ($\alpha\in (0,1)$) with $u_0(x)\ge 0$, \eqref{IntroEq0-1}+\eqref{neumann-cond} possesses a  unique bounded global classical solution $(u(x,t;u_0),v(x,t;u_0))$ with
 $u(x,0;u_0)=u_0(x)$.

 \smallskip

 \noindent $\bullet$ If $b>2\chi$, then for any  $u_0\in C^{0,\alpha}(\bar\Omega)$ with $u_0(x)\ge 0$ and $u_0(x)\not \equiv 0$,
 $$
 \lim_{t\to\infty} \big[ \|u(\cdot;t;u_0)-\frac{a}{b}\|_{L^\infty(\Omega)}+\|v(\cdot,t;u_0)-\frac{a}{b}\|_{L^\infty(\Omega)}\big]=0.
 $$

 \noindent It should be pointed out  that, for the above choices of $m(u)$, $\chi(u,v)$, $f(u,v)$, and $g(u,v)$,  when $N\ge 3$ and $b\le \frac{N-2}{N}\chi$,  it  remains open whether for any
 given initial data $u_0\in C^{0,\alpha}(\bar\Omega)$,  \eqref{IntroEq0-1}+\eqref{neumann-cond}
 possesses a global classical solution $(u(x,t;u_0),v(x,t;u_0))$ with
 $u(x,0;u_0)=u_0(x)$, or whether finite-time blow-up occurs for some initial data.  The works \cite{lankeit_exceed},  \cite{win_JMAA_veryweak},  \cite{win_JNLS} should be mentioned along this direction.
 It is shown in \cite{lankeit_exceed}, \cite{win_JNLS} that in presence of suitably weak logistic dampening
(that is, small $b$)  certain transient growth phenomena do occur for some
initial data. It is shown in \cite{win_JMAA_veryweak} that if we keep the choices of $m(u)$ and $\chi(u,v)$ as above and let
 $f(u,v)=au-bu^\kappa$ with suitable
$\kappa<2$ (for instance, $\kappa=3/2$) and $g(u,v)=u-\frac{1}{|\Omega|}\int_\Omega u(x)dx$, then  finite-time blow-up is possible.

The reader is referred to \cite{BBTW}, \cite{DiNa}, \cite{GaSaTe}, \cite{WaMuZh}, \cite{Win}, \cite{win_jde}, \cite{win_JMAA_veryweak}, \cite{win_arxiv}, \cite{win_JNLS}, \cite{YoYo}, \cite{ZhMuHuTi},  and references
 therein for other studies of \eqref{IntroEq0-1} on bounded domain with Neumann or Dirichlet  boundary conditions and with $f(u,v)$ being logistic type source function or $0$ and  $m(u)$, $\chi(u,v)$, and $g(u,v)$ being  various kinds of functions.

 There are also several studies of \eqref{IntroEq0-1} when $\Omega$ is the whole space $\R^N$ and $f(u,v)=0$ (see \cite{DiNaRa},\cite{KKAS} \cite{NAGAI_SENBA_YOSHIDA}, \cite{SuKu}, \cite{Sug}). For example,
in the  case of   $m(u)\equiv 1$, $\chi(u,v)=\chi u$,  $f(u,v)=0$, and $g(u,v)=u-v$, where  $\chi$ is a positive constant, it is known that blow-up occurs if
either N=2 and the total initial population mass is large enough, or $N\ge 3$ (see  \cite{BBTW}, \cite{DiNaRa},  \cite{NAGAI_SENBA_YOSHIDA} and  references therein).  However, there is little study of \eqref{IntroEq0-1} when $\Omega=\R^N$
and $f(u,v)\not =0$.

The objective of this paper is to investigate the local/global existence and asymptotic behavior of positive solutions of \eqref{IntroEq0-1} when
$\Omega=\R^N$ and $f(u,v)=au-bu^2$ is a logistic  source function, where $a$ and $b$ are positive constants. We further restrict ourselves to the choices
 $m(u)\equiv 1$, $\chi(u,v)=\chi u$,
 and $g(u,v)=u-v$, where $\chi$ is  positive constant. System \eqref{IntroEq0-1} with these choices on $\R^N$ reads as
\begin{equation}\label{IntroEq1}
\begin{cases}
u_{t}=\Delta u - \nabla\cdot  (\chi u\nabla v) + u(a-bu),\quad  x\in\R^N,\,\,\, t>0 \\
0=\Delta v + u-v,\quad x\in\R^N,\,\,\, t>0.
\end{cases}
\end{equation}

We first investigate the local existence of solutions of \eqref{IntroEq1} for various given initial functions $u_0(x)$. Note that, due to biological interpretations, only nonnegative initial functions will be of interest. We call $(u(x,t),v(x,t))$ a {\it  classical solution} of \eqref{IntroEq1} on
$ [0,T)$ if  $u,v\in C(\R^N\times [0,T))\cap  C^{2,1}(\R^N\times (0,T))$ and satisfies \eqref{IntroEq1} for
$(x,t)\in\R^N\times (0,T)$ in the classical sense. A classical solution $(u(x,t),v(x,t))$  of \eqref{IntroEq1} on
$ [0,T)$ is called {\it nonnegative} if $u(x,t)\ge 0$ and $v(x,t)\ge 0$ for all $(x,t)\in\R^N\times [0,T)$.  A {\it global classical solution}   of \eqref{IntroEq1} is a classical solution  on
$ [0,\infty)$.

Let
\begin{equation}
\label{unif-cont-space}
C_{\rm unif}^b(\R^N)=\{u\in C(\R^N)\,|\, u(x)\,\,\text{is uniformly continuous in}\,\, x\in\R^N\,\, {\rm and}\,\, \sup_{x\in\R^N}|u(x)|<\infty\}
\end{equation}
equipped with the norm $\|u\|_\infty=\sup_{x\in\R^N}|u(x)|$.  For given $0<\nu<1$ and $0<\theta<1$, let
\begin{equation}
\label{holder-cont-space}
C^{\nu}_{\rm unif}(\R^N)=\{u\in C_{\rm unif}^b(\R^N)\,|\, \sup_{x,y\in\R^N,x\not =y}\frac{|u(x)-u(y)|}{|x-y|^\nu}<\infty\}
\end{equation}
equipped with the norm $\|u\|_{\infty,\nu}=\sup_{x\in\R^N}|u(x)|+\sup_{x,y\in\R^N,x\not =y}\frac{|u(x)-u(y)|}{|x-y|^\nu}$, and
\begin{align*}
&C^{\theta}((t_1,t_2),C_{\rm unif}^\nu(\R^N))\\
&=\{ u(\cdot)\in C((t_1,t_2),C_{\rm unif}^{\nu}(\R^N))\,|\, u(t)\,\, \text{is locally H\"{o}lder continuous with exponent}\,\, \theta\}.
\end{align*}

We have the following result on the local existence and uniqueness of solution of \eqref{IntroEq1} for initial data belonging to $C^{b}_{\rm unif}(\R^N)$.

\begin{tm} \label{Local existence1}
For any $u_0 \in C_{\rm unif}^{b}(\R^N)$ with  $u_0 \geq 0$,  there exists $T_{\max}^\infty(u_0) \in (0,\infty]$  such that \eqref{IntroEq1}  has a unique non-negative classical solution $(u(x,t;u_0),v(x,t;u_0))$ on $[0,T_{\max}^\infty(u_0))$ satisfying that $\lim_{t\to 0^{+}}u(\cdot,t;u_0)=u_0$ in the
$C_{\rm unif}^b(\R^N)$-norm,
\begin{equation}
\label{local-1-eq1}
u(\cdot,\cdot;u_0) \in C([0, T_{\max}^\infty(u_0) ), C_{\rm unif}^b(\R^N) )\cap C^1((0,T_{\max}^\infty(u_0)),C_{\rm unif}^b(\R^N))
\end{equation}
and
\begin{equation}
\label{local-1-eq2}
u(\cdot,\cdot;u_0),\,\, \partial_{x_i} u(\cdot,\cdot),\,\, \partial^2_{x_i x_j} u(\cdot,\cdot),\,\, \partial_t u(\cdot,\cdot;u_0)\in C^\theta((0,T_{\max}^\infty(u_0)),C^\nu_{\rm unif}(\R^N))
\end{equation}
for all $i,j=1,2,\cdots,N$, $0<\theta\ll 1$, and  $0<\nu\ll 1$.
Moreover, if $T_{\max}^\infty(u_0)< \infty,$ then
$\limsup_{t \to T_{\max}^\infty(u_0)}   \left\| u(\cdot,t;u_0) \right\|_{\infty}  =\infty.$
\end{tm}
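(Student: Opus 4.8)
The plan is to eliminate $v$, recast \eqref{IntroEq1} as a single integral equation for $u$, and solve it by a contraction mapping argument in $C([0,T],C_{\rm unif}^b(\R^N))$, exploiting the smoothing properties of the heat semigroup; regularity, positivity, and the blow-up alternative are then obtained as corollaries. First I would solve the elliptic equation: since the second equation reads $(I-\Delta)v=u$, the chemoattractant is determined by $u$ through the Bessel potential operator, $v=(I-\Delta)^{-1}u$. Writing this as convolution with the Bessel kernel, whose gradient is integrable on $\R^N$, yields the key mapping bounds $\|(I-\Delta)^{-1}u\|_\infty\le C\|u\|_\infty$ and $\|\nabla(I-\Delta)^{-1}u\|_\infty\le C\|u\|_\infty$ on $C_{\rm unif}^b(\R^N)$, and positivity of the kernel forces $v\ge 0$ whenever $u\ge 0$. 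Substituting $v$ back and keeping the chemotaxis term in divergence form, the system collapses to the scalar problem $u_t=\Delta u-\chi\nabla\cdot(u\nabla v)+au-bu^2$ with $v=(I-\Delta)^{-1}u$, whose mild formulation is
$$u(t)=e^{t\Delta}u_0-\chi\int_0^t\nabla\cdot e^{(t-s)\Delta}\big(u(s)\nabla v(s)\big)\,ds+\int_0^t e^{(t-s)\Delta}\,u(s)\big(a-bu(s)\big)\,ds.$$

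Next I would run the fixed point. Define $\Phi$ by the right-hand side above on $X_{T,R}=\{u\in C([0,T],C_{\rm unif}^b(\R^N)):\|u\|\le R,\ u(0)=u_0\}$ with $R=2\|u_0\|_\infty+1$. Here $e^{t\Delta}$ is the heat semigroup, which is analytic on $C_{\rm unif}^b(\R^N)$ and obeys $\|e^{t\Delta}f\|_\infty\le\|f\|_\infty$ together with the smoothing bound $\|\nabla e^{t\Delta}f\|_\infty\le Ct^{-1/2}\|f\|_\infty$. The decisive estimate is on the chemotaxis integral: pulling the gradient through the semigroup and combining the smoothing bound with the elliptic estimates gives
$$\Big\|\chi\int_0^t\nabla\cdot e^{(t-s)\Delta}\big(u(s)\nabla v(s)\big)\,ds\Big\|_\infty\le C\chi\int_0^t(t-s)^{-1/2}\|u(s)\|_\infty^2\,ds\le 2C\chi R^2\sqrt{t},$$
while the logistic integral is bounded by $C(aR+bR^2)t$. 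Choosing $T$ small, depending only on $R$ and hence only on $\|u_0\|_\infty$, makes $\Phi$ map $X_{T,R}$ into itself; the difference estimate, resting on $\|u\nabla v-\tilde u\nabla\tilde v\|_\infty\le CR\|u-\tilde u\|_\infty$ (split as $(u-\tilde u)\nabla v+\tilde u\,\nabla(v-\tilde v)$), makes $\Phi$ a contraction. This produces a unique mild solution on a maximal interval $[0,T_{\max}^\infty(u_0))$.

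Then I would upgrade regularity and settle the remaining claims. Standard parabolic smoothing for the analytic semigroup $e^{t\Delta}$ on $C_{\rm unif}^b(\R^N)$, together with the Hölder continuity in time of the nonlinearity (which follows once $u$ is continuous and the elliptic bounds are applied to $\nabla v$), yields \eqref{local-1-eq1}--\eqref{local-1-eq2} by a bootstrap, so that $u$ and its first and second spatial derivatives and its time derivative all lie in $C^\theta((0,T_{\max}^\infty(u_0)),C^\nu_{\rm unif}(\R^N))$ and $u$ is a classical solution. Positivity of $u$ follows from the parabolic maximum principle applied to the linear equation $u_t=\Delta u-\chi\nabla v\cdot\nabla u+[a-\chi v+(\chi-b)u]u$ (obtained from $\Delta v=v-u$), with $w\equiv 0$ as a subsolution and $u_0\ge 0$, using boundedness of the coefficients; then $v\ge 0$ by positivity of the Bessel kernel. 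Finally, the blow-up alternative is the usual continuation argument: since the existence time depends only on $\|u_0\|_\infty$, if $T_{\max}^\infty(u_0)<\infty$ while $\limsup_{t\to T_{\max}^\infty(u_0)}\|u(\cdot,t;u_0)\|_\infty<\infty$, one could restart from a time just below $T_{\max}^\infty(u_0)$ and extend the solution past $T_{\max}^\infty(u_0)$, contradicting maximality.

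The main obstacle will be the chemotaxis term $\nabla\cdot(u\nabla v)$, which threatens a loss of derivatives. The resolution is twofold: keep it in divergence form and move the single spatial derivative onto the semigroup, gaining the integrable singularity $(t-s)^{-1/2}$ rather than a derivative on $u$; and establish the sharp elliptic gradient bound $\|\nabla(I-\Delta)^{-1}u\|_\infty\le C\|u\|_\infty$ uniformly on $\R^N$ for merely bounded, uniformly continuous data, so that the product $u\nabla v$ is controlled purely by $\|u\|_\infty^2$. Verifying this elliptic estimate from the integrability of the Bessel kernel and its gradient, in the space $C_{\rm unif}^b(\R^N)$ rather than on a bounded domain, is the technical heart of the argument.
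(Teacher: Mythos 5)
Your proposal follows essentially the same route as the paper's proof: reduce to the scalar mild formulation with $v=(I-\Delta)^{-1}u$, establish the two key bounds $\|T(t)\nabla\cdot w\|_\infty\le C t^{-1/2}e^{-t}\|w\|_\infty$ and $\|\nabla(\Delta-I)^{-1}u\|_\infty\le\sqrt N\,\|u\|_\infty$ (Lemmas \ref{L_Infty bound 2} and \ref{lemma2}), run the contraction in $C([0,T],C_{\rm unif}^b(\R^N))$ with $T$ depending only on $\|u_0\|_\infty$, upgrade to a classical solution via linear parabolic theory, and obtain nonnegativity by comparison and the blow-up alternative by continuation, exactly as the paper does. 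The only caveat is your parenthetical claim that H\"older continuity in time of the nonlinearity "follows once $u$ is continuous": continuity alone does not suffice, and, as in Claim 2 of the paper's proof, one must first use the integral equation and the semigroup smoothing to show that $t\mapsto u(t)$ is locally H\"older continuous with values in a fractional power space $X^\beta\subset C^\nu_{\rm unif}(\R^N)$ before invoking the classical theory for linear parabolic equations with H\"older coefficients.
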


For given $p\ge 1$ and $\alpha \in(0,\ 1)$,  let $X^\alpha$ be the fractional power space of $I-\Delta$ on $X=L^p(\R^N)$. We
obtain the following results on the local existence and uniqueness of solutions of \eqref{IntroEq1} for $u_{0}\in X^{\alpha}$.

 \begin{tm} \label{Local existence2}
 Assume that $p>N$ and $\alpha\in(\frac{1}{2},1)$.
 For every nonnegative $u_{0}\in X^{\alpha}$,  there is a positive number $T_{\max}^\alpha(u_0)\in (0,\infty]$ such that \eqref{IntroEq1} has a unique nonnegative classical solution $(u(x,t;u_0),v(x,t;u_0))$ on $ \mathbb{R}^{N}\times [0,T_{\max}^\alpha(u_0))$ satisfying that
 $\lim_{t\to 0+}u(\cdot,t;u_0)=u_0$ in the $X^\alpha$-norm,
 \begin{equation}
 \label{local-2-eq1}
 u(\cdot,\cdot;u_0) \in C([0, T_{\max}^\alpha(u_0) ), X^\alpha) \cap C^1((0,T_{\max}^\alpha(u_0)),L^p(\R^N)),
  \end{equation}

  \begin{equation}
  \label{local-2-eq2}
 u(\cdot,\cdot;u_0)\in C((0,T_{\max}^\alpha(u_0)),X^\beta)\cap  C^1((0,T_{\max}^\alpha(u_0)),C_{\rm unif}^b(\R^N)),
 \end{equation}
 and
 \begin{equation}
 \label{local-2-eq3}
u(\cdot,\cdot;u_0),\,\,  \partial_{x_i} u(\cdot,\cdot;u_0),\,\, \partial^2_{x_ix_j} u(\cdot,\cdot;u_0),\,\, \partial_t u(\cdot,\cdot;u_0)\in C^\theta ((0,T_{\max}^\alpha(u_0)),C^\nu_{\rm unif}(\R^N))
 \end{equation}
 for all $0\le \beta<1$, $i,j=1,2,\cdots$, $0<\theta\ll 1$, and $0<\nu\ll 1$.
 Moreover, if   $T_{\max}^\alpha(u_0) <+\infty$, then
 $\lim_{t\to T_{\max}^\alpha(u_0))}\|u(\cdot,t;u_0)\|_{X^{\alpha}} =\infty.$
\end{tm}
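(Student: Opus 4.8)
The plan is to recast \eqref{IntroEq1} as an abstract semilinear parabolic equation on $X=L^p(\R^N)$ and then invoke the classical local existence theory for abstract semilinear parabolic equations generated by sectorial operators. Let $A=I-\Delta$, which is sectorial on $X$, so that $-A$ generates an analytic semigroup $\{e^{-At}\}_{t\ge 0}$ and the fractional power space $X^\alpha=D(A^\alpha)$ coincides with the Bessel potential space $H^{2\alpha,p}(\R^N)$. Because the second equation is elliptic, it can be solved explicitly: $0=(\Delta-I)v+u$ gives $v=(I-\Delta)^{-1}u=A^{-1}u$. Substituting this and writing $\Delta=I-A$, I would rewrite the $u$-equation as
$$
\frac{du}{dt}+Au=F(u),\qquad F(u):=(1+a)u-bu^2-\chi\,\nabla\cdot\big(u\,\nabla A^{-1}u\big),
$$
and seek a mild solution $u(t)=e^{-At}u_0+\int_0^t e^{-A(t-s)}F(u(s))\,ds$ in $C([0,T],X^\alpha)$; continuity at $t=0$ in the $X^\alpha$-norm is built into this class.

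The decisive step is to verify that $F:X^\alpha\to X$ is well defined and locally Lipschitz, and this is exactly where the hypotheses $p>N$ and $\alpha\in(\tfrac12,1)$ enter. I would use three embeddings: since $\alpha>N/(2p)$, one has $X^\alpha\hookrightarrow L^\infty(\R^N)$, which controls the logistic term $u^2$ and the term $u\,\Delta v=u(v-u)$ in $L^p$; since $\alpha>\tfrac12$, one has $X^\alpha\hookrightarrow W^{1,p}(\R^N)$, so $\nabla u\in L^p$; and since $p>N$, the elliptic regularity $A^{-1}:L^p\to W^{2,p}\hookrightarrow C^1_{\rm unif}(\R^N)$ yields $\|\nabla A^{-1}u\|_\infty\le C\|u\|_{L^p}$. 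Writing $\nabla\cdot(u\nabla v)=\nabla u\cdot\nabla v+u(v-u)$ and pairing $\nabla u\in L^p$ with $\nabla v\in L^\infty$ then shows $F(u)\in L^p$; the local Lipschitz bound follows from the same estimates applied to the bilinear differences, e.g. $\nabla u\cdot\nabla v_u-\nabla w\cdot\nabla v_w=\nabla(u-w)\cdot\nabla v_u+\nabla w\cdot\nabla(v_u-v_w)$ with $v_u=A^{-1}u$. I expect this chemotaxis term to be the main obstacle, as it is the only place where a derivative of $u$ meets a derivative of $v$ and where all three embeddings must cooperate.

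With $F$ locally Lipschitz, the abstract theory delivers a unique maximal mild solution on $[0,T_{\max}^\alpha(u_0))$, together with the blow-up alternative: $T_{\max}^\alpha<\infty$ forces $\|u(\cdot,t;u_0)\|_{X^\alpha}\to\infty$ as $t\to T_{\max}^\alpha$. Nonnegativity I would obtain separately. Viewing $v=A^{-1}u$ as a given coefficient, $u$ solves the scalar linear equation $u_t=\Delta u-\chi\nabla v\cdot\nabla u+\big(a-\chi(v-u)-bu\big)u$ whose coefficients are bounded on $[0,T]$ (since $u\in C([0,T],X^\alpha)\hookrightarrow L^\infty$ and $\nabla v\in L^\infty$); the parabolic comparison principle with the subsolution $\underline u\equiv 0$ gives $u\ge 0$ when $u_0\ge 0$, and then $v=A^{-1}u\ge 0$ because the Bessel kernel of $(I-\Delta)^{-1}$ is positive.

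Finally, the regularity assertions \eqref{local-2-eq1}--\eqref{local-2-eq3} follow by bootstrapping. The smoothing estimates $\|A^\beta e^{-At}\|_{L(X)}\le C t^{-\beta}$ together with Hölder continuity of $s\mapsto F(u(s))$ upgrade the mild solution to $u\in C((0,T_{\max}^\alpha),X^\beta)$ for every $\beta<1$ and to $u\in C^1((0,T_{\max}^\alpha),L^p)$. Since $X^\alpha\hookrightarrow C^b_{\rm unif}(\R^N)$, for any $t_0\in(0,T_{\max}^\alpha)$ the function $u(\cdot,t_0;u_0)$ is an admissible nonnegative initial datum for Theorem~\ref{Local existence1}; as classical solutions are unique, the solution furnished there coincides with the present one for $t>t_0$, and this transfers the interior Schauder/Hölder regularity \eqref{local-1-eq2}, i.e. $u$ and $\partial_{x_i}u$, $\partial^2_{x_ix_j}u$, $\partial_t u$ lie in $C^\theta((0,T_{\max}^\alpha),C^\nu_{\rm unif}(\R^N))$, and yields $u\in C^1((0,T_{\max}^\alpha),C^b_{\rm unif}(\R^N))$, thereby establishing \eqref{local-2-eq2}--\eqref{local-2-eq3}.
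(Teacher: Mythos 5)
Your proposal is correct and follows essentially the same route as the paper: solve the elliptic equation as $v=(I-\Delta)^{-1}u$, show the resulting nonlinearity $F:X^\alpha\to L^p(\R^N)$ is locally Lipschitz using the embeddings of $X^\alpha$ together with elliptic regularity, invoke Henry's semilinear semigroup theory for the unique maximal mild solution, the blow-up alternative and the $C((0,T_{\max}^\alpha),X^\beta)$ smoothing, and then obtain the classical regularity \eqref{local-2-eq2}--\eqref{local-2-eq3} and nonnegativity by passing through Theorem \ref{Local existence1}, since $X^\alpha\hookrightarrow C_{\rm unif}^b(\R^N)$. The only small deviation is in estimating the chemotaxis term: you pair $\nabla u\in L^p$ with $\nabla(\Delta-I)^{-1}u\in L^\infty$, whereas the paper pairs $\nabla u\in L^\infty$ (via $X^\alpha\subset C^{1+\delta}$) with $\nabla v\in L^p$, so your version in fact runs under the stated hypotheses $\alpha\in(\tfrac12,1)$, $p>N$ alone, without the additional constraint $\frac{(2\alpha-1-\delta)p}{N}>1$ that the paper fixes at the beginning of its argument.
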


Since  $X^{\alpha}\subset C^{b}_{\rm unif}(\R^N)$ for $p>N$ and $\alpha\in(\frac{1}{2},1)$, the existence of local classical solution for initial data in $X^{\alpha}$ is guaranteed by Theorem \ref{Local existence1}. However $u(\cdot,\cdot;u_0) \in C([0, T_{\max}^\alpha(u_0) ), X^\alpha)\cap C^{1}((0,T_{\max}^\alpha(u_0)),L^p(\R^N))\cap C((0,T_{\max}^\alpha(u_0)),X^\beta) $  in Theorem \ref{Local existence2}, which is very important for later use, is  not included in Theorem \ref{Local existence1}. The proof of Theorem \ref{Local existence1}  is based on the contraction mapping theorem and a technical result proved in Lemma \ref{L_Infty bound 2}, while the proof of Theorem \ref{Local existence2} is based on
 the semigroup method. Theorem \ref{Local existence2} is of particular interest because it helps to take advantage of the integration by parts theorems, thus, helps to get a weaker condition on the parameters $\chi, b$  and $N$ to ensure the global existence of classical solutions (see Theorem \ref{Main Theorem 1}). Furthermore, Theorem \ref{Local existence2} will be used to get some extension results for $L^{p}-$ integrable initial data, which are not necessarily continuous, as stated in the next theorem and  Theorem \ref{Main Theorem 2}.

 Since functions of $L^{p}(\R^N)$ are not always continuous, the definition of solution to \eqref{IntroEq1} should be modified. For a nonnegative initial data $u_{0}\in L^{p}(\R^N)$, by a  solution of \eqref{IntroEq1} on $[0,T)$ with initial data $u_0$ we mean nonnegative functions $u(x,t)$, $v(x,t)$
 satisfying that $u(\cdot,\cdot), v(\cdot,\cdot)\in  C^{2,1}(\R^N\times(0, T))$, \eqref{IntroEq1} holds for $(x,t)\in\R^N\times (0,T)$ in classical sense, and $\lim_{t\to 0^+}u(\cdot,t)=u_0(\cdot)$ in the $L^p(\R^N)$-norm.

\begin{tm}\label{Local Existence3}
For every $p>N\ with \ p\geq 2$ and $u_{0}\in L^{p}(\R^N)$ with  $u_{0}\geq 0$, there is a positive number $T_{\max}^p(u_0)\in (0,\infty]$ such that \eqref{IntroEq1} has a unique non-negative solution $(u(x,t;u_{0}),v(x,t;u_{0}))$ on $[0,T_{\max}^p(u_0))$ satisfying that $\lim_{t\rightarrow 0^{+}}u(\cdot,t;u_{0})=u_{0}(\cdot)$ in the $L^{p}(\R^N)$-norm,
\begin{equation}
\label{local-3-eq1}
  u(\cdot,\cdot;u_0)\in C([0, T_{\max}^p(u_0) ), L^p(\R^N)) \cap C^1((0,T_{\max}^p(u_0)),L^p(\R^N)),
  \end{equation}
  \begin{equation}
  \label{local-3-eq2}
  u(\cdot,\cdot;u_0)\in C((0,T_{\max}^p(u_0)),X^\beta)\cap C^1((0,T_{\max}^p(u_0)),C_{\rm unif}^b(\R^N)),
  \end{equation}
  and
  \begin{equation}
  \label{local-3-eq3}
  u(\cdot,\cdot;u_0),\,\, \partial_{x_i} u(\cdot,\cdot),\,\,\partial^2_{x_ix_j} u(\cdot,\cdot;u_0),\,\,  \partial_t u(\cdot,\cdot;u_0)\in C^\theta ((0,T_{\max}^p(u_0)),C^\nu_{\rm unif}(\R^N))
  \end{equation}
 for all $0\le \beta<1$, $i,j=1,2,\cdots,N$, $0<\theta\ll 1$, and $0<\nu\ll 1$.
  Moreover,
  if $T_{\max}^p(u_0) <+\infty$, then
   $\lim_{t\to T_{\max}^p(u_0))}\|u(\cdot,t;u_0)\|_{L^{p}(\R^N)} =\infty.$
\end{tm}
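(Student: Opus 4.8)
The plan is to obtain Theorem~\ref{Local Existence3} by \emph{approximation}, using Theorem~\ref{Local existence2} as a black box for smooth data and passing to the limit in the $L^p$-topology; the decisive structural fact that makes this work under the standing assumptions $p>N$, $p\ge 2$ is that the local time of existence can be made to depend on the datum only through $\|u_0\|_{L^p(\R^N)}$. Write $A=I-\Delta$ on $L^p(\R^N)$, and recall the analytic, positivity-preserving semigroup $e^{-tA}$ with the smoothing bounds $\|A^\sigma e^{-tA}\|_{L^p\to L^p}\lesssim t^{-\sigma}$ and the $L^p$-boundedness of $A^{-1/2}\nabla$ and $A^{-1/2}\nabla\!\cdot$. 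Since $v=(I-\Delta)^{-1}u=A^{-1}u$ gains two derivatives and $p>N$, one has the crucial cheap bound $\|\nabla v\|_\infty\lesssim\|u\|_{L^p}$, and moreover $X^{1/2}=W^{1,p}\hookrightarrow C^b_{\rm unif}(\R^N)$. Given $u_0\in L^p(\R^N)$ with $u_0\ge 0$, set $u_{0,n}=e^{-A/n}u_0$; these lie in $X^\beta$ for every $\beta<1$, are nonnegative by positivity of the semigroup, and converge to $u_0$ in $L^p(\R^N)$. Theorem~\ref{Local existence2} then yields nonnegative classical solutions $u_n=u(\cdot,\cdot;u_{0,n})$ with the regularity \eqref{local-2-eq1}--\eqref{local-2-eq3}.

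Next I would derive a uniform time of existence and uniform bounds. Casting the problem in its mild form
\[
u(t)=e^{-tA}u_0+\int_0^t e^{-(t-s)A}\big[(1+a)u(s)-\chi\,\nabla\!\cdot\!\big(u(s)\nabla v(s)\big)-b\,u(s)^2\big]\,ds,
\]
I would work in the time-weighted norm
\[
\|u\|_{*,T}=\sup_{0<t\le T}\|u(t)\|_{L^p}+\sup_{0<t\le T}t^{1/2}\|u(t)\|_{X^{1/2}}.
\]
The linear Duhamel term is bounded by $\lesssim\|u_0\|_{L^p}$; the logistic term is handled via $\|u^2\|_{L^p}\le\|u\|_\infty\|u\|_{L^p}\lesssim\|u\|_{X^{1/2}}\|u\|_{L^p}$, producing a weight $s^{-1/2}$; and the gradient term is estimated by factoring $A^{1/2}$ through the divergence and using $\|u\nabla v\|_{X^{1/2}}\lesssim\|u\|_{X^{1/2}}\|\nabla v\|_\infty\lesssim\|u\|_{X^{1/2}}\|u\|_{L^p}$, which yields only the integrable singularities $(t-s)^{-1/2}s^{-1/2}$. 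A continuity (bootstrap) argument then produces $T_*=T_*(\|u_0\|_{L^p})>0$ and $M$ with $\|u_n\|_{*,T_*}\le M$ for all large $n$.

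To conclude, I would run the same singular-integral estimates on the difference $u_n-u_m$: the quadratic nonlinearities contribute a factor $(\|u_n\|_{*,T_*}+\|u_m\|_{*,T_*})\|u_n-u_m\|_{*,T_*}$, so that for $T_*$ small one gets $\|u_n-u_m\|_{*,T_*}\le C\|u_{0,n}-u_{0,m}\|_{L^p}$. Hence $(u_n)$ is Cauchy and converges to a nonnegative mild solution $u$ with $u\in C([0,T_*],L^p(\R^N))$ and $\lim_{t\to0^+}u(t)=u_0$ in $L^p(\R^N)$. For each $t>0$ we have $u(t)\in X^{1/2}\subset C^b_{\rm unif}(\R^N)$; a standard bootstrap from the $\ast$-bound upgrades $u$ to $C((0,T_*],X^\beta)$ for every $\beta<1$, and applying Theorem~\ref{Local existence2} from any positive initial time identifies $u$ with the classical $X^\beta$-solution on $[t_0,T_*)$, delivering \eqref{local-3-eq1}--\eqref{local-3-eq3}. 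Uniqueness in this solution class follows from the same difference estimate together with Gronwall's inequality, and the blow-up alternative follows because the existence time depends only on $\|u_0\|_{L^p}$: if $\|u(t)\|_{L^p}$ stayed bounded as $t\uparrow T_{\max}^p(u_0)<\infty$, one could restart from $t$ close to $T_{\max}^p(u_0)$ and extend the solution a fixed amount beyond it, a contradiction.

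The hard part will be the second step, namely closing the a priori and difference estimates for the gradient nonlinearity $\nabla\!\cdot(u\nabla v)$ and the quadratic term $bu^2$ with only $L^p$-control of the datum, so that the existence time is governed solely by $\|u_0\|_{L^p}$. This is exactly where the regularizing effect of $v=(I-\Delta)^{-1}u$ is indispensable: it makes $\|\nabla v\|_\infty\lesssim\|u\|_{L^p}$ for $p>N$, so each quadratic term carries one ``cheap'' factor, while the time-weighted $X^{1/2}$-norm supplies the $L^\infty$-control of $u$ needed for $bu^2$, leaving only integrable time singularities in the Duhamel integrals.
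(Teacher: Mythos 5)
Your proposal is correct in outline, but it follows a genuinely different route from the paper. The paper does \emph{not} regularize the data and pass to the limit for existence: it runs a direct contraction mapping argument in $C([0,T],L^p(\R^N))$ with the plain sup-norm, using the smoothing estimate for $T(t)\nabla\cdot$ (Lemma \ref{L_Infty bound}), the elliptic bound $\|(\Delta-I)^{-1}w\|_{C^{1,b}_{\rm unif}(\R^N)}\le C\|w\|_{L^p(\R^N)}$ for $p>N$, and the H\"older estimate $\|u^2\|_{L^{p/2}}\le\|u\|_{L^p}^2$ combined with the $L^{p/2}\to L^p$ heat smoothing $(t-s)^{-N/(2p)}$ (this is exactly where $p\ge 2$ enters); no time-weighted $X^{1/2}$ norm is needed. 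Regularity is then obtained as in Theorem \ref{Local existence1}, by comparing with the classical solution of the linearized equation \eqref{aux-eq-for-thm1} (Friedman's theory) and invoking Theorem \ref{more-on-existence-of-mild-solution} for $C((0,T_{\max}),X^\beta)$, and approximation by data in $C_{\rm unif}^b(\R^N)\cap L^p(\R^N)$ is used only to prove non-negativity. Your scheme instead mollifies $u_0$, applies Theorem \ref{Local existence2} to each $u_{0,n}$, closes uniform bounds in the weighted norm $\sup_t\|u(t)\|_{L^p}+\sup_t t^{1/2}\|u(t)\|_{X^{1/2}}$ (using $\|u\|_\infty\lesssim\|u\|_{X^{1/2}}$ for the quadratic term, rather than $p\ge2$), and passes to the limit; this buys non-negativity for free and makes the existence time manifestly a function of $\|u_0\|_{L^p}$ alone, at the cost of extra machinery: you must also show the approximate classical solutions persist up to the common time $T_*$ (an analogue of Theorem \ref{Lp-Bound1}), and your bootstrap to $C((0,T_*],X^\beta)$, $\beta<1$, needs the commutation trick $A^{\beta}e^{-\tau A}\nabla\cdot=A^{\beta-1/2}e^{-\tau A}\nabla\cdot A^{1/2}$ spelled out. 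Two small cautions: the intermediate inequality $\|u\nabla v\|_{X^{1/2}}\lesssim\|u\|_{X^{1/2}}\|\nabla v\|_\infty$ is not literally correct (the term $u\,D^2v$ must be bounded by $\|u\|_\infty\|D^2v\|_{L^p}\lesssim\|u\|_{X^{1/2}}\|u\|_{L^p}$, which is the bound you actually use), and for uniqueness in the theorem's class you cannot directly invoke the weighted difference estimate at $t=0$, since an arbitrary solution satisfying \eqref{local-3-eq1}--\eqref{local-3-eq3} is not known a priori to obey $t^{1/2}\|u(t)\|_{X^{1/2}}\le C$ near $0$; the paper avoids this by writing the Duhamel identity from $t_1>0$, applying Lemma \ref{Lem2}, and letting $t_1\to0$, and you should do the same.
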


The following theorem establishes the equality of the maximal existence intervals of the classical solutions of \eqref{IntroEq1}
in different phase spaces.

\begin{tm}
\label{Lp-Bound1}
Assume that $p>N$, $\alpha\in(\frac{1}{2},1)$, and  $X^\alpha$ is the fractional power space of $I-\Delta$ on $X=L^p(\R^N)$.
Then the following hold,
\begin{itemize}
\item[(1)] if $u_0\in X^\alpha$, then $T_{\max}^\infty(u_0)=T_{\max}^\alpha(u_0)$;

\item[(2)] if $u_0\in X^\alpha$ and $p\ge 2$, then $T_{\max}^\alpha(u_0)=T_{\max}^p(u_0)$;

\item[(3)] if $p\ge 2$ and $u_0\in C_{\rm unif}^b(\R^n)\cap L^p(\R^n)$, then $T_{\max}^\infty(u_0)=T_{\max}^p(u_0)$;

\item[(4)] if  $p\ge 2$ and $u_{0}\in L^p(\R^N)\cap L^{1}(\R^N)$, then  $u(\cdot,\cdot,u_{0})\in C([0,T_{\max}^p(u_0)),L^{1}(\R^N))$,
\end{itemize}
where $T_{\max}^\infty(u_0)$, $T_{\max}^\alpha(u_0)$, and $T_{\max}^p(u_0)$ are as in Theorems \ref{Local existence1},
\ref{Local existence2}, and \ref{Local Existence3}, respectively.
\end{tm}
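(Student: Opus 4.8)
**The plan is to prove each of the four statements by establishing two-sided inequalities between the maximal existence times, using the blow-up criteria from Theorems 1.1–1.3 together with the embeddings and regularity results already proved.**

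For (1), I would argue as follows. Since $X^\alpha \subset C^b_{\rm unif}(\R^N)$ continuously for $p>N$ and $\alpha\in(\tfrac12,1)$, any $u_0\in X^\alpha$ lies in $C^b_{\rm unif}(\R^N)$, so both $T_{\max}^\infty(u_0)$ and $T_{\max}^\alpha(u_0)$ are defined. Uniqueness of classical solutions (granted by Theorems 1.1 and 1.2) forces the two solutions to coincide on the common interval $[0,\min\{T_{\max}^\infty,T_{\max}^\alpha\})$; hence it suffices to rule out either time being strictly smaller. I expect $T_{\max}^\alpha(u_0)\le T_{\max}^\infty(u_0)$ to be the easier inclusion: the $X^\alpha$-solution is in particular a $C^b_{\rm unif}$-solution (using the embedding on \eqref{local-2-eq2}), so it cannot terminate before $T_{\max}^\infty$. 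For the reverse, suppose $T_{\max}^\alpha(u_0) < T_{\max}^\infty(u_0)$. Then on the compact interval $[0,T_{\max}^\alpha]$ the $C^b_{\rm unif}$-norm of $u(\cdot,t;u_0)$ stays bounded (as $T_{\max}^\alpha < T_{\max}^\infty$ and the $L^\infty$ blow-up criterion from Theorem 1.1 has not triggered). The key step is then to upgrade this $L^\infty$-bound to an $X^\alpha$-bound and contradict the $X^\alpha$ blow-up criterion of Theorem 1.2. I would do this via the variation-of-constants (Duhamel) representation $u(t)=e^{-(I-\Delta)(t-t_0)}u(t_0)+\int_{t_0}^t e^{-(I-\Delta)(t-s)}F(u(s))\,ds$, where $F$ collects the chemotaxis and logistic terms, and apply the standard analytic-semigroup smoothing estimate $\|(I-\Delta)^\alpha e^{-(I-\Delta)t}\|\le C t^{-\alpha}e^{-\delta t}$ to show the right-hand side remains bounded in $X^\alpha$ up to $T_{\max}^\alpha$. \textbf{This bootstrap from an $L^\infty$-bound to an $X^\alpha$-bound is the main obstacle}, because one must control the nonlinear term $\nabla\!\cdot(\chi u\nabla v)$ in the appropriate fractional norm; here the elliptic equation $v=(I-\Delta)^{-1}u$ lets me trade derivatives on $v$ for regularity gained from the resolvent, keeping everything within the scale of fractional power spaces.

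For (2), again uniqueness identifies the two solutions on their common interval, and since $X^\alpha\subset L^p$ the inclusion $T_{\max}^\alpha(u_0)\le T_{\max}^p(u_0)$ is immediate. For the reverse, assuming $T_{\max}^\alpha < T_{\max}^p$, the $L^p$ blow-up criterion of Theorem 1.3 guarantees $\|u(\cdot,t)\|_{L^p}$ stays bounded on $[0,T_{\max}^\alpha]$; I would then run the same Duhamel bootstrap as in (1), now starting from an $L^p$-bound and using the smoothing estimate $\|(I-\Delta)^\alpha e^{-(I-\Delta)t}\|_{L^p\to L^p}\le C t^{-\alpha}e^{-\delta t}$ to recover an $X^\alpha$-bound, contradicting Theorem 1.2's blow-up criterion. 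Statement (3) is then essentially a corollary: for $u_0\in C^b_{\rm unif}\cap L^p$ one has both solutions agreeing by uniqueness, and the two blow-up criteria (in $L^\infty$ and in $L^p$) can be played against each other through the same smoothing argument, so neither maximal time can be the smaller one.

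For (4), I would show the $L^1$-norm propagates by working directly with the evolution equation. Testing the $u$-equation against a suitable approximation of $\mathrm{sgn}(u)$ (or simply integrating, since $u\ge 0$) gives $\frac{d}{dt}\int_{\R^N} u\,dx = a\int u - b\int u^2 - \chi\int \nabla\!\cdot(u\nabla v)\,dx$; the divergence term integrates to zero (formally, and rigorously after justifying the decay at infinity coming from $u(t)\in X^\beta\subset C^b_{\rm unif}$ with spatial decay inherited from $L^p\cap L^1$ data), leaving $\frac{d}{dt}\|u(t)\|_{L^1}\le a\|u(t)\|_{L^1}$. Gr\"onwall then yields $\|u(t)\|_{L^1}\le e^{at}\|u_0\|_{L^1}$, locally bounded on $[0,T_{\max}^p)$, and continuity of $t\mapsto u(t)$ in $L^1$ follows from the Duhamel formula together with the $L^1$-contractivity of $e^{-(I-\Delta)t}$ and the now-controlled nonlinearity. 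The delicate point here is justifying the vanishing of the divergence integral on the unbounded domain, which I would handle by a cutoff-and-limit argument using the integrability of $u$ and the bound on $\nabla v = \nabla (I-\Delta)^{-1}u$.
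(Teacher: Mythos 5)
Your overall strategy for (1) and (3) (uniqueness on the common interval, assume strict inequality, use the surviving blow-up criterion to get an a priori bound, bootstrap through Duhamel and semigroup smoothing, contradict the other blow-up criterion) is exactly the paper's strategy, but the key step of (1), as you describe it, has a genuine gap. If you keep the chemotactic term in divergence form and start only from the $L^\infty$ (or $L^p$) bound, the relevant estimate is $\|(I-\Delta)^{\alpha}T(t-s)\nabla\cdot(u\nabla v)\|_{L^p}\le C(t-s)^{-\alpha-\frac12}e^{-(t-s)}\|u\nabla v\|_{L^p}$, and since $\alpha>\frac12$ the exponent $\alpha+\frac12$ exceeds $1$: the time integral does not converge and the singular Gronwall inequality (Lemma \ref{Lem2}, which requires exponents strictly less than $1$) cannot close the argument. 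Trading derivatives onto $v$ via $v=(I-\Delta)^{-1}u$ does not help, because the problematic derivative falls on $u$ (equivalently on the product), not on $v$. The paper's proof resolves precisely this point: it expands $\nabla\cdot(u\nabla v)=\nabla u\cdot\nabla v+u\Delta v=\nabla u\cdot\nabla v+u(v-u)$ and uses that $\sup_{0\le\tau\le T^\alpha_{\max}}\|u(\tau)\|_{C^1}<\infty$ — available from the interior regularity \eqref{local-1-eq2} of the $C^b_{\rm unif}$ solution exactly because $T^\alpha_{\max}<T^\infty_{\max}$ — together with the elliptic estimate $\|\nabla v\|_{L^p}\le C\|u\|_{L^p}\le C\|u\|_{X^\alpha}$, so every semigroup factor is only $(t-s)^{-\alpha}$ and Lemma \ref{Lem2} applies. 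Without this (or an equivalent multi-step bootstrap giving a gradient bound on $u$ first), your reduction to "the standard smoothing estimate" does not go through; the same objection applies to your proposed proof of (2).

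For (2) and (4) you also take genuinely different routes from the paper, and in both cases the paper's route is simpler or safer. For (2) no bootstrap is needed at all: the $L^p$ solution of Theorem \ref{Local Existence3} is already known to lie in $C((0,T^p_{\max}),X^\beta)$ for every $\beta<1$ by \eqref{local-3-eq2}, so if $T^\alpha_{\max}<T^p_{\max}$ the $X^\alpha$-norm stays bounded up to $T^\alpha_{\max}$, contradicting Theorem \ref{Local existence2}; the same fact gives the direction $T^\infty_{\max}\ge T^p_{\max}$ in (3). For (4) the paper never integrates the PDE over $\R^N$: it runs the contraction mapping in $C([0,T],L^1(\R^N)\cap L^p(\R^N))$, handling the quadratic term in $L^1$ via $\|u\|_{L^2}\le\|u\|_{L^1}^{\lambda}\|u\|_{L^p}^{1-\lambda}$, and then continues up to $T^p_{\max}$ by the Gronwall-type argument of part (1) with $p=1$, $\alpha=0$. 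Your energy argument $\frac{d}{dt}\|u(t)\|_{L^1}\le a\|u(t)\|_{L^1}$ is plausible but circular as stated: to differentiate the $L^1$-norm and discard $\int\nabla\cdot(u\nabla v)$ you must already know $u(t)\in L^1$ with enough decay, and "spatial decay inherited from $L^p\cap L^1$ data" is not automatic for this system; the cutoff error terms involve $\int u\,|\nabla v|\,|\nabla\varphi_R|$, whose vanishing as $R\to\infty$ again uses the very $L^1$ control you are trying to establish. Working with the mild formulation, as the paper does, avoids this issue entirely.
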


Next, we study the global existence of classical solutions of \eqref{IntroEq1}.
The following are the main  results on the global existence.

\begin{tm} \label{Main Theorem 0}
Suppose that $\chi\leq b.$ Then for every nonnegative $u_{0} \in C_{\rm unif}^b(\R^{N})$,  \eqref{IntroEq1} has a unique global  classical solution $(u(x,t;u_0),v(x,t;u_0))$ satisfying that $u(\cdot,\cdot;u_0)\in C([0,\infty),C_{\rm unif}^b(\R^N))$. Furthermore if $\chi<b,$ the solution is globally bounded.
\end{tm}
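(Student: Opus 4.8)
The plan is to obtain an a priori pointwise bound on $u$ by a comparison argument and then to invoke the blow-up criterion of Theorem~\ref{Local existence1}. First I would eliminate $v$ from the $u$-equation: expanding the divergence in \eqref{IntroEq1} and substituting $\Delta v = v - u$ from the second equation shows that $u$ satisfies the scalar equation
\begin{equation}
\label{plan-rewritten-eq}
u_t - \Delta u + \chi \nabla v \cdot \nabla u = u\big[a - \chi v - (b-\chi)u\big], \qquad x\in\R^N,\ 0<t<T_{\max}^\infty(u_0).
\end{equation}
Since $u\ge 0$, the elliptic equation gives $v=(I-\Delta)^{-1}u\ge 0$ because the Bessel kernel is positive, and the smoothing properties of $(I-\Delta)^{-1}$ ensure that $v$ and $\nabla v$ are bounded on $\R^N$ whenever $u$ is. As $\chi\le b$ and $v\ge 0$, the right-hand side of \eqref{plan-rewritten-eq} is dominated by $u\big[a-(b-\chi)u\big]$, which has the structure of a logistic reaction with nonpositive quadratic coefficient.

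Next I would compare $u$ with the spatially homogeneous solution of the associated logistic ODE. Let $\phi$ solve $\phi'=a\phi-(b-\chi)\phi^2$ with $\phi(0)=\|u_0\|_\infty$. Regarding $\phi=\phi(t)$ as a function on $\R^N\times[0,\infty)$ that is constant in $x$, it satisfies $\phi_t-\Delta\phi+\chi\nabla v\cdot\nabla\phi=a\phi-(b-\chi)\phi^2$. Subtracting, the difference $w=u-\phi$ obeys the linear differential inequality
\begin{equation}
\label{plan-w-ineq}
w_t-\Delta w+\chi\nabla v\cdot\nabla w-c(x,t)\,w\le 0, \qquad c(x,t):=a-(b-\chi)(u+\phi),
\end{equation}
with $w(\cdot,0)=u_0-\|u_0\|_\infty\le 0$. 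On every compact interval $[0,T]\subset[0,T_{\max}^\infty(u_0))$ the coefficients $\chi\nabla v$ and $c(x,t)$ are bounded, because $u$, and hence $\phi$, $v$ and $\nabla v$, are bounded there by Theorem~\ref{Local existence1}.

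The key step is then to run the maximum principle for \eqref{plan-w-ineq} on the unbounded domain $\R^N$, which is the main technical obstacle: the spatial supremum of $u$ need not be attained, so one cannot merely evaluate the equation at a maximum point. I would resolve this by the comparison principle for bounded subsolutions of uniformly parabolic equations with bounded coefficients. After the reduction $\tilde w=e^{-\lambda t}w$ with $\lambda\ge\sup_{[0,T]\times\R^N}c$, which turns \eqref{plan-w-ineq} into an inequality with nonnegative zeroth-order coefficient, one subtracts an auxiliary weight that grows at spatial infinity; since $\tilde w$ is bounded, the supremum of the resulting function over $\R^N\times[0,T]$ is attained at a finite point, and evaluating the inequality there (then letting the weight tend to $0$) forces $\tilde w\le 0$. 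Hence $w\le 0$, i.e.
\begin{equation*}
u(x,t)\le\phi(t)\qquad\text{for all } x\in\R^N,\ 0\le t\le T,
\end{equation*}
and since $T<T_{\max}^\infty(u_0)$ is arbitrary the bound holds throughout $[0,T_{\max}^\infty(u_0))$.

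Finally I would read off the two conclusions from the behavior of $\phi$. If $\chi<b$, the logistic ODE gives $\phi(t)\le\max\{\|u_0\|_\infty,\ a/(b-\chi)\}$ for all $t\ge 0$, so $\sup_{0\le t<T_{\max}^\infty(u_0)}\|u(\cdot,t)\|_\infty<\infty$; by the blow-up criterion of Theorem~\ref{Local existence1} this forces $T_{\max}^\infty(u_0)=\infty$ and yields global boundedness. If $\chi=b$, then $\phi(t)=\|u_0\|_\infty e^{at}$ is finite on every finite interval, so $\limsup_{t\to T}\|u(\cdot,t)\|_\infty<\infty$ for each finite $T$; this again rules out finite-time blow-up and gives global existence, now without a uniform-in-time bound.
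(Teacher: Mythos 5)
Your proposal is correct and follows essentially the same route as the paper: bound the right-hand side using $v\ge 0$ and $\chi\le b$, compare $u$ with the solution of the logistic ODE $\phi'=a\phi-(b-\chi)\phi^2$, $\phi(0)=\|u_0\|_\infty$, and conclude via the blow-up criterion of Theorem \ref{Local existence1}, with boundedness for $\chi<b$ from the ODE's convergence to $a/(b-\chi)$. The only difference is that you spell out the Phragm\'en--Lindel\"of-type justification of the comparison principle on the unbounded domain $\R^N$, which the paper simply invokes; this is a welcome (and correct) elaboration rather than a different argument.
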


\begin{tm}\label{Main Theorem 1}
Suppose that $u_{0}\in L^{1}(\R^N)$ satisfies the hypothesis of Theorem \ref{Local existence2} and that
\begin{equation}\label{Eq1_Main Theorem}
\frac{N}{2}<\frac{\chi}{(\chi-b)_{+}}.
\end{equation}
Then \eqref{IntroEq1} has a unique global  classical solution  $(u(x,t;u_0),v(x,t;u_0))$ satisfying that
$u(\cdot,\cdot;u_0)\in C([0,\infty), X^\alpha)$. Furthermore, it holds that
 \begin{equation}\label{Eq2_Main Theorem}
\|u(t)\|_{L^{\infty}}\leq C_{1}t^{-\frac{N}{2p}}e^{-t}\|u_{0}\|_{L^{p}} + C_{2} \left[ \|u_{0}\|_{L^{p}} + {K}_{p}^{\frac{1}{p}}\|u_{0}\|_{1}^{\frac{\tilde{\tilde{\lambda}}_{p}}{p}}\|u_{0}\|_{p}^{\frac{\tilde{\lambda}_{p}}{p}}t^{\frac{1}{p}}e^{({\lambda}_{p}-1)at} \right]e^{at}
\end{equation}
for any $t>0$,
where ${K}_{p}, \lambda_{p} ,\tilde{\lambda}_{p}, \tilde{\tilde{\lambda}},\ C_{1}$ and $C_{2}$ are positive constants depending on $N ,\ p,\ a,\ b,$ and $\chi.$
\end{tm}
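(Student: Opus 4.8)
The plan is first to dispose of the trivial regime and then to reduce the whole theorem to the a priori bound \eqref{Eq2_Main Theorem}. If $\chi\le b$, then $(\chi-b)_+=0$, condition \eqref{Eq1_Main Theorem} holds vacuously, and global existence is already granted by Theorem \ref{Main Theorem 0}; so I would assume $\chi>b$, in which case \eqref{Eq1_Main Theorem} reads $\frac{N}{2}<\frac{\chi}{\chi-b}$, i.e. the Tello--Winkler threshold $b>\frac{N-2}{N}\chi$. By the blow-up criteria in Theorems \ref{Local existence2}--\ref{Lp-Bound1}, the solution extends globally as soon as $\|u(\cdot,t)\|_\infty$ stays finite on finite time intervals. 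Since the right-hand side of \eqref{Eq2_Main Theorem} is finite for every finite $t$, establishing \eqref{Eq2_Main Theorem} yields global existence and the estimate simultaneously. So the entire task becomes the a priori bound.

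\textbf{The $L^1$ and the key $L^{p_0}$ estimate.} Because $u_0\in L^1(\R^N)$, Theorem \ref{Lp-Bound1}(4) keeps $u(\cdot,t)\in L^1(\R^N)$, so integrating the first equation of \eqref{IntroEq1} over $\R^N$ (the divergence terms drop, with no boundary contribution thanks to the regularity/integrability from Theorems \ref{Local existence2}--\ref{Lp-Bound1}) gives $\frac{d}{dt}\|u\|_{L^1}\le a\|u\|_{L^1}$, hence $\|u(t)\|_{L^1}\le e^{at}\|u_0\|_{L^1}$. The heart of the proof is then an $L^{p_0}$ estimate. Multiplying the $u$-equation by $u^{p_0-1}$, integrating, and rewriting the chemotaxis term by means of the elliptic equation in the form $\Delta v=v-u$ produces
\[\frac{d}{dt}\int_{\R^N}u^{p_0}+p_0(p_0-1)\int_{\R^N}u^{p_0-2}|\nabla u|^2+(p_0-1)\chi\int_{\R^N}u^{p_0}v\le\big[(p_0-1)\chi-bp_0\big]\int_{\R^N}u^{p_0+1}+ap_0\int_{\R^N}u^{p_0}.\]
The decisive use of \eqref{Eq1_Main Theorem} is the choice of $p_0$ with $\frac{N}{2}<p_0<\frac{\chi}{\chi-b}$, which is possible exactly under the hypothesis and forces $(p_0-1)\chi-bp_0<0$. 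Writing $y=\int_{\R^N}u^{p_0}$, dropping the nonnegative diffusion and $\int u^{p_0}v$ terms, and bounding $\int u^{p_0+1}$ from below by interpolating $L^{p_0}$ between $L^1$ and $L^{p_0+1}$, namely $\|u\|_{p_0}\le\|u\|_1^{1/p_0^2}\|u\|_{p_0+1}^{(p_0^2-1)/p_0^2}$, which yields $\int u^{p_0+1}\ge y^{p_0/(p_0-1)}\|u\|_1^{-1/(p_0-1)}$, I would combine this with the $L^1$ bound to obtain the Bernoulli differential inequality
\[y'(t)\le ap_0\,y(t)-c_0\,e^{-\frac{at}{p_0-1}}\|u_0\|_1^{-\frac{1}{p_0-1}}\,y(t)^{1+\frac{1}{p_0-1}},\qquad c_0=bp_0-(p_0-1)\chi>0.\]
Explicit integration of this inequality bounds $\|u(t)\|_{p_0}$ by an interpolated expression in $\|u_0\|_1$ and $\|u_0\|_{p_0}$ with polynomial and exponential time weights, which is precisely the mechanism producing the constants $K_p,\lambda_p,\tilde\lambda_p,\tilde{\tilde\lambda}$ in \eqref{Eq2_Main Theorem}.

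\textbf{Bootstrap to $L^\infty$.} Next I would pass from the $L^{p_0}$ bound ($p_0>N/2$) to the $L^\infty$ bound via the variation-of-constants formula for the shifted generator $A=I-\Delta$, writing $u(t)=e^{-tA}u_0+\int_0^t e^{-(t-s)A}\big[(a+1)u-bu^2-\chi\nabla\cdot(u\nabla v)\big](s)\,ds$. The homogeneous term contributes exactly $\|e^{-tA}u_0\|_\infty\le C_1 t^{-N/(2p)}e^{-t}\|u_0\|_{L^p}$, which is the first term of \eqref{Eq2_Main Theorem}. For the Duhamel integral I would use the smoothing estimates $\|e^{-tA}f\|_\infty\le Ct^{-N/(2q)}e^{-t}\|f\|_q$ and $\|e^{-tA}\nabla\cdot F\|_\infty\le Ct^{-\frac12-N/(2q)}e^{-t}\|F\|_q$ together with the elliptic bound $\|\nabla v\|_q\le C\|u\|_q$ (boundedness of $\nabla(I-\Delta)^{-1}$ on $L^q$). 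Feeding in the $L^{p_0}$ bound and iterating finitely many times, each step raising the integrability exponent and being time-integrable near $s=t$ once $q>N$ so that $\frac12+\frac{N}{2q}<1$, upgrades the control to $L^\infty$ and assembles the second bracket of \eqref{Eq2_Main Theorem}.

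\textbf{Main obstacle.} The genuinely delicate point is the $L^{p_0}$ estimate, not the bootstrap: everything hinges on being able to select $p_0>N/2$ with $(p_0-1)\chi<bp_0$, which is exactly the content of \eqref{Eq1_Main Theorem}, and on turning the resulting differential inequality into a truly damped Bernoulli inequality through the $L^1$-interpolation lower bound on $\int u^{p_0+1}$. I expect the tempting shortcut of absorbing $\int u^{p_0+1}$ into the diffusion term by Gagliardo--Nirenberg to fail, since it yields a superlinear \emph{undamped} term in $y$ and is blind to the threshold; the sign of the $\int u^{p_0+1}$ coefficient must be exploited directly. The remaining work, namely the quantitative integration of the Bernoulli inequality and the exponent bookkeeping in the semigroup iteration that generate the explicit constants $C_1,C_2,K_p,\lambda_p,\tilde\lambda_p,\tilde{\tilde\lambda}$, is routine, and the integration-by-parts steps on $\R^N$ are legitimate precisely because Theorems \ref{Local existence2} and \ref{Lp-Bound1} supply the requisite decay and integrability of $u$ and its derivatives.
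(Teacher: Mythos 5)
Your first half essentially reproduces the paper's starting point: choosing an exponent $p_0$ with $\max\{1,\tfrac N2\}<p_0<\tfrac{\chi}{(\chi-b)_+}$ and exploiting the sign $\chi(p_0-1)-bp_0<0$ is exactly the paper's Lemma \ref{Lp bound} applied with $r=q_1$ (your Bernoulli refinement via the $L^1$-interpolation is correct but not needed; simply dropping the damped term already gives $\|u(t)\|_{L^{p_0}}\le e^{at}\|u_0\|_{L^{p_0}}$, which is all the paper uses). Where you genuinely depart is the upgrade to $L^\infty$. The paper first raises the energy estimate to the large exponent $r=p>N$ via the variant Gagliardo--Nirenberg inequality of Lemma \ref{Pre1}, interpolating against $L^{q_1}$: because $q_1>\tfrac N2$, the exponent $\theta(r+1)/r$ is strictly less than $1$, so Young's inequality absorbs $\int u^{r+1}$ into the diffusion term and leaves only a source controlled by the already-known $L^{q_1}$ bound. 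So your objection that this route produces an undamped superlinear term and is blind to the threshold misreads it: the threshold enters precisely through $q_1>\tfrac N2$. It is also this Step-1 Gronwall computation, followed by a single Duhamel step from $L^p$ with $p>N$, that generates the specific factors $t^{1/p}e^{(\lambda_p-1)at}$ in \eqref{Eq2_Main Theorem}.

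The genuine gap is in your bootstrap from $L^{p_0}$ to $L^\infty$. With only the stated elliptic bound $\|\nabla v\|_{L^q}\le C\|u\|_{L^q}$, the information $u(s)\in L^{p_0}$ gives $u\nabla v\in L^{p_0/2}$, and the smoothing estimate for $T(t-s)\nabla\cdot$ from $L^{p_0/2}$ into $L^{q'}$ carries the singularity $(t-s)^{-\frac12-\frac N2\left(\frac{2}{p_0}-\frac{1}{q'}\right)}$; integrability near $s=t$ forces $\frac{1}{q'}>\frac{2}{p_0}-\frac1N$, which is compatible with a gain of integrability ($q'>p_0$) only when $p_0>N$. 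But under \eqref{Eq1_Main Theorem} one may well have $\frac{\chi}{(\chi-b)_+}\le N$ (the case not already covered by Corollary \ref{corol} or by choosing $p_0>N$), and then every admissible $p_0$ satisfies $p_0\le N$, so your very first iteration step loses integrability rather than gains it and the scheme stalls exactly in the regime the theorem is about. To close the iteration you must use the full elliptic regularity $\|v\|_{W^{2,q}(\R^N)}\le C\|u\|_{L^q(\R^N)}$ (as in \eqref{Eq72}) combined with the Sobolev embedding, i.e. $\|\nabla v\|_{L^{Nq/(N-q)}}\le C\|u\|_{L^q}$, which restores the gain for every $p_0>\tfrac N2$, or else replace the semigroup iteration by the paper's Lemma \ref{Pre1} energy step up to $L^p$. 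As written, the passage from $L^{p_0}$ to $L^\infty$ --- the heart of the proof --- is not established; two smaller points are that for $\chi\le b$ the estimate \eqref{Eq2_Main Theorem} still has to be proved (Theorem \ref{Main Theorem 0} only gives global existence), and that your iteration, even once repaired, would need an additional argument (or interpolation bookkeeping) to land on the precise form of the bracket in \eqref{Eq2_Main Theorem}.
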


\begin{tm}\label{Main Theorem 2} Let $N$ be a positive integer and $p$ be a positive real number with $p>N$ and $p\geq 2$. Suppose that \eqref{Eq1_Main Theorem} holds. Then for every nonnegative initial data $u_{0}\in L^{1}(\R^{N})\cap L^{p}(\R^{N})$, \eqref{IntroEq1} has a unique  global classical  solution $(u(x,t;u_0),v(x,t;u_0))$  satisfying that $u(\cdot,\cdot;u_0)\in C([0,\infty),L^p(\R^N))\cap C([0,\infty),L^1(\R^N))$ and  that \eqref{Eq2_Main Theorem} holds.
\end{tm}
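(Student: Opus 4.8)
The plan is to avoid deriving any genuinely new a priori bound and instead to bootstrap from Theorem \ref{Main Theorem 1} by exploiting the parabolic smoothing built into the local $L^p$-theory. First I would apply Theorem \ref{Local Existence3} to the nonnegative datum $u_0\in L^1(\R^N)\cap L^p(\R^N)$ to obtain the unique nonnegative local solution $(u(\cdot,\cdot;u_0),v(\cdot,\cdot;u_0))$ on $[0,T_{\max}^p(u_0))$. By the regularity \eqref{local-3-eq2}, for every fixed $t_0\in(0,T_{\max}^p(u_0))$ and every $0\le\beta<1$ we have $u(\cdot,t_0;u_0)\in X^\beta$; in particular $u(\cdot,t_0;u_0)\in X^\alpha$ for $\alpha\in(\frac{1}{2},1)$. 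Moreover, since $u_0\in L^1(\R^N)\cap L^p(\R^N)$, Theorem \ref{Lp-Bound1}(4) yields $u(\cdot,\cdot;u_0)\in C([0,T_{\max}^p(u_0)),L^1(\R^N))$, so $u(\cdot,t_0;u_0)\in L^1(\R^N)$ as well, and the solution is nonnegative.

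Thus the profile $w_0:=u(\cdot,t_0;u_0)$ satisfies every hypothesis of Theorem \ref{Main Theorem 1}: it is nonnegative, it lies in $X^\alpha\cap L^1(\R^N)$, and \eqref{Eq1_Main Theorem} holds by assumption. Because \eqref{IntroEq1} is autonomous, the global $X^\alpha$-solution emanating from $w_0$ produced by Theorem \ref{Main Theorem 1} coincides, after the time shift $t\mapsto t-t_0$, with $u(\cdot,\cdot;u_0)$ on their common interval of existence; here I would invoke the uniqueness statements of Theorems \ref{Local existence2} and \ref{Local Existence3} together with the identification of maximal times in Theorem \ref{Lp-Bound1} to ensure the two notions of solution agree. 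Consequently $u(\cdot,\cdot;u_0)$ extends to $[t_0,\infty)$, which forces $T_{\max}^p(u_0)=\infty$; combined with Theorems \ref{Local Existence3} and \ref{Lp-Bound1}(4) this gives $u(\cdot,\cdot;u_0)\in C([0,\infty),L^p(\R^N))\cap C([0,\infty),L^1(\R^N))$.

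It remains to recover \eqref{Eq2_Main Theorem} with the \emph{original} data $u_0$. Applying Theorem \ref{Main Theorem 1} to $w_0$ and translating in time gives, for every $t>t_0$,
\begin{equation*}
\|u(t)\|_{L^\infty}\le C_1(t-t_0)^{-\frac{N}{2p}}e^{-(t-t_0)}\|u(t_0)\|_{L^p}+C_2\Big[\|u(t_0)\|_{L^p}+K_p^{\frac{1}{p}}\|u(t_0)\|_{1}^{\frac{\tilde{\tilde{\lambda}}_p}{p}}\|u(t_0)\|_{p}^{\frac{\tilde{\lambda}_p}{p}}(t-t_0)^{\frac{1}{p}}e^{(\lambda_p-1)a(t-t_0)}\Big]e^{a(t-t_0)}.
\end{equation*}
Fixing $t>0$ and letting $t_0\to0^+$, the continuity of $u(\cdot,\cdot;u_0)$ in $L^p$ at $t=0$ (Theorem \ref{Local Existence3}) and in $L^1$ at $t=0$ (Theorem \ref{Lp-Bound1}(4)) give $\|u(t_0)\|_{L^p}\to\|u_0\|_{L^p}$ and $\|u(t_0)\|_{1}\to\|u_0\|_{1}$, while each explicit factor of $t_0$ converges to its value at $t_0=0$; passing to the limit then recovers \eqref{Eq2_Main Theorem} for all $t>0$.

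The main obstacle is precisely this last passage to the limit. It works only because the constants $C_1,C_2,K_p,\lambda_p,\tilde{\lambda}_p,\tilde{\tilde{\lambda}}_p$ furnished by Theorem \ref{Main Theorem 1} depend on $N,p,a,b,\chi$ alone, hence are independent of the base point $t_0$, and because the singular weight $(t-t_0)^{-N/2p}$ stays controlled as $t_0\downarrow0$ for each fixed $t>0$; I would check both points carefully. Should the uniqueness and time-translation bookkeeping prove awkward, an alternative is to approximate $u_0$ in $L^1(\R^N)\cap L^p(\R^N)$ by nonnegative $u_{0,n}\in C_c^\infty(\R^N)\subset X^\alpha\cap L^1(\R^N)$, apply Theorem \ref{Main Theorem 1} to each $u_{0,n}$, and pass to the limit using the uniform bound \eqref{Eq2_Main Theorem} together with the continuous dependence inherent in the contraction/semigroup construction behind Theorem \ref{Local Existence3}; in that route the obstacle migrates to establishing $L^p$-continuous dependence uniformly down to $t=0$.
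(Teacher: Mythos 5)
Your primary argument is correct, but it is a genuinely different route from the paper's. The paper proves Theorem \ref{Main Theorem 2} by mollification: it sets $u_{0n}=\varphi_{1/n}\ast u_0$, notes $u_{0n}\in X^\alpha\cap L^1(\R^N)$ with $\|u_{0n}\|_{L^q}\le\|u_0\|_{L^q}$ and $u_{0n}\to u_0$ in $L^q$ for $q\in[1,p]$, applies Theorem \ref{Main Theorem 1} to each $u_{0n}$, and then uses the continuous-dependence arguments behind Theorem \ref{Local Existence3} together with the uniform-in-$n$ bounds from Lemma \ref{Lp bound} and \eqref{Step 1:} to conclude $T_{\max}^p(u_0)=\infty$ and to pass \eqref{Eq2_Main Theorem} to the limit $m\to\infty$ — this is exactly the fallback you sketch at the end. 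Your main route instead exploits instantaneous smoothing: by \eqref{local-3-eq2} and Theorem \ref{Lp-Bound1}(4), the profile $w_0=u(\cdot,t_0;u_0)$ at any $t_0>0$ lies in $X^\alpha\cap L^1(\R^N)$ and is nonnegative, so Theorem \ref{Main Theorem 1} applies forward from $t_0$; uniqueness (Theorems \ref{Local existence2}, \ref{Local Existence3} and the identification of maximal times in Theorem \ref{Lp-Bound1}) glues the two solutions, giving globality, and the estimate \eqref{Eq2_Main Theorem} is recovered by letting $t_0\to0^+$, which is legitimate precisely because the constants $C_1,C_2,K_p,\lambda_p,\tilde\lambda_p,\tilde{\tilde\lambda}_p$ depend only on $N,p,a,b,\chi$ and because $\|u(\cdot,t_0)\|_{L^p}\to\|u_0\|_{L^p}$, $\|u(\cdot,t_0)\|_{L^1}\to\|u_0\|_{L^1}$ by the continuity at $t=0$ you cite. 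What each approach buys: yours avoids the somewhat delicate continuous dependence of solutions on initial data in $L^p\cap L^1$ (which the paper invokes only as ``the arguments of Theorem \ref{Local Existence3}''), at the cost of the time-shift and uniqueness bookkeeping that you correctly flag; the paper's approximation route avoids the $t_0\to0^+$ limit but needs the stability of the solution map near $t=0$ with respect to the datum.
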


We point out that if the domain is bounded, it allows us to take advantage of the fact that the domain has finite measure to obtain that the solution is globally bounded. However, in the present case where domain has infinite size, no such trick can be used. This makes the study of this problem on unbounded domain more complicated. We also point out that the global solution in Theorem \ref{Main Theorem 1} or Theorem \ref{Main Theorem 2} may not be bounded in $L^p(\R^N)$-norm (see the remarks after Theorem \ref{Main Theorem 4}).
We remark that Theorem  \ref{Main Theorem 0} requires less assumption on the initial data and more assumption on the parameters ($\chi,  b, N$) while Theorem \ref{Main Theorem 1} requires more assumption on the initial data and less assumption on the parameters. Theorem \ref{Main Theorem 2} generalizes the known results when \eqref{IntroEq1} is studied on bounded domain with Neumann boundary condition.   In the case of bounded domains, under \eqref{Eq1_Main Theorem}, it follows from our results that \eqref{IntroEq1}  has a unique global classical solution with given initial function $u_{0}\in L^{p}(\Omega)$. It is obvious that $C^{0}(\overline{\Omega})\subset L^{p}({\Omega})$ for every $p\geq 1$ when $\Omega$ is bounded;  then our results cover initial data in $C^{0}(\overline{\Omega})$.

When \eqref{Eq1_Main Theorem} does not hold, we do not know yet if \eqref{IntroEq1}  has a global classical solution for given initial function $u_0$ as in Theorem \ref{Main Theorem 1} or \ref{Main Theorem 2}. As it is mentioned in the above, this problem is still open for bounded domain case as well. For the whole space case, when $\frac{N}{2}<\frac{\chi}{(\chi-b)_{+}}<\infty$ (hence $b<\chi$),  it also remains open whether \eqref{IntroEq1}  has a global classical solution for given initial function $u_0$ as in Theorem \ref{Main Theorem 0}.

Finally, we  explore the asymptotic behavior of global classical solutions of \eqref{IntroEq1} and obtain the following main results.

\begin{tm}\label{Main Theorem 3}
Suppose that $u_{0}\in C_{\rm unif}^{b}(\R^N)$ with $\inf_{x\in\R^N}u_{0}(x)>0.$ If
\begin{equation}
  \quad b>2\chi,
\end{equation}
then the   unique global classical solution $(u(x,t;u_0),v(x,t;u_0))$ of \eqref{IntroEq1} with $u(x,0;u_0)=u_0(x)$  satisfies that
\begin{equation}
\|u(\cdot,t;u_0)-\frac{a}{b}\|_{\infty} + \|v(\cdot,t;u_0)-\frac{a}{b}\|_{\infty}\rightarrow 0 \ \text{as} \ t\rightarrow \infty.
\end{equation}
\end{tm}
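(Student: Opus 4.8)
The plan is to reduce the convergence to an ODE ``squeezing'' argument for the spatial supremum and infimum of $u$, driven by the logistic reaction, with the hypothesis $b>2\chi$ supplying exactly the contraction needed. First I would record the structural facts. Since $b>2\chi$ forces $\chi<b$, Theorem \ref{Main Theorem 0} guarantees a globally defined and globally bounded classical solution, and Theorem \ref{Local existence1} provides uniform continuity of $u$ together with bounded, uniformly continuous $\nabla u$, $\Delta u$, and $u_t$ on $[\tau,\infty)$ for each $\tau>0$. Solving the second equation as $v=(I-\Delta)^{-1}u$ and using positivity and unit mass of the Bessel kernel yields the pointwise ordering $\inf_x u(\cdot,t)\le v(x,t)\le \sup_x u(\cdot,t)$ and the bound $\|\nabla v(\cdot,t)\|_\infty\le C\|u(\cdot,t)\|_\infty$. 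Substituting $\Delta v=v-u$ into the first equation rewrites it as
\[
u_t=\Delta u-\chi\nabla v\cdot\nabla u+u\big(a-(b-\chi)u-\chi v\big),
\]
a semilinear equation with bounded drift $\chi\nabla v$ and reaction Lipschitz in $u$.

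Write $\bar u(t)=\sup_x u(x,t)$ and $\underline u(t)=\inf_x u(x,t)$, both locally Lipschitz in $t$ because $u_t$ is bounded. Comparing $u$ with spatially constant super/subsolutions of the rewritten equation --- using $v\ge\underline u(t)$ in the reaction to bound it from above and $v\le\bar u(t)$ to bound it from below --- I would establish, in the sense of Dini derivatives,
\[
D^{+}\bar u\le \bar u\big(a-(b-\chi)\bar u-\chi\underline u\big),\qquad
D_{+}\underline u\ge \underline u\big(a-(b-\chi)\underline u-\chi\bar u\big).
\]
From the first inequality and $\underline u\ge 0$, a scalar logistic comparison gives $\limsup_{t}\bar u\le a/(b-\chi)$; feeding this into the second and starting from $\underline u(0)=\inf_x u_0>0$, the bracket stays positive for small $\underline u$ precisely because $b>2\chi$, so $\liminf_t\underline u\ge a(b-2\chi)/(b-\chi)^2>0$. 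Thus $m:=\liminf_t\underline u$ and $M:=\limsup_t\bar u$ satisfy $0<m\le M<\infty$.

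Now I would apply the fluctuation lemma: there are times $t_k\to\infty$ with $\bar u(t_k)\to M$ and $D^+\bar u(t_k)\to 0$, and times $s_k\to\infty$ with $\underline u(s_k)\to m$ and $D_+\underline u(s_k)\to 0$. Passing to the limit in the two differential inequalities (using $\underline u(t_k)\ge m+o(1)$, $\bar u(s_k)\le M+o(1)$, and $m,M>0$) yields
\[
(b-\chi)M+\chi m\le a,\qquad (b-\chi)m+\chi M\ge a.
\]
Subtracting gives $(b-2\chi)(M-m)\le 0$, so $b>2\chi$ forces $M\le m$, hence $M=m$; either relation then gives $bM=a$, i.e. $M=m=a/b$. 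Because $\underline u(t)\le\bar u(t)$ for all $t$, this pins down $\lim_t\bar u=\lim_t\underline u=a/b$, and the sandwich $\underline u(t)\le u(x,t)\le\bar u(t)$ together with $\underline u(t)\le v(x,t)\le\bar u(t)$ upgrades this to $\|u(\cdot,t)-\frac ab\|_\infty+\|v(\cdot,t)-\frac ab\|_\infty\to 0$.

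The main obstacle is the rigorous justification of the two Dini differential inequalities on the unbounded domain $\R^N$, where the spatial supremum and infimum need not be attained. This is where the uniform continuity and uniform boundedness of $u$, $\nabla u$, $\Delta u$ from Theorem \ref{Local existence1}, the boundedness of the drift $\nabla v$, and a comparison principle for bounded solutions of semilinear parabolic equations on $\R^N$ must be combined carefully; once these inequalities and the strict positivity of the infimum are secured, the remaining ODE squeezing is elementary.
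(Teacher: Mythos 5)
Your endgame is exactly the paper's: two linear inequalities in $\overline{u}:=\limsup_t\sup_x u$ and $\underline{u}:=\liminf_t\inf_x u$, namely $(b-\chi)\overline{u}+\chi\underline{u}\le a$ and $(b-\chi)\underline{u}+\chi\overline{u}\ge a$, subtracted against each other so that $b>2\chi$ forces $\overline{u}=\underline{u}=\frac{a}{b}$, with the elliptic sandwich $\inf_x u\le v\le\sup_x u$ carrying the conclusion over to $v$. The preparatory facts you list (global bounded solution from Theorem \ref{Main Theorem 0}, $\|\nabla v\|_\infty\le\sqrt N\|u\|_\infty$, strict positivity of $\inf_x u(\cdot,t)$ for finite $t$ and the role of $b>2\chi$ in keeping $a-\chi\overline{u}>0$) also mirror the paper's Lemma \ref{Asymp1Lem0}.

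The genuine gap is the step you yourself flag as ``the main obstacle'': the two Dini differential inequalities
$D^{+}\bar u(t)\le \bar u\,(a-(b-\chi)\bar u-\chi\underline u)$ and $D_{+}\underline u(t)\ge \underline u\,(a-(b-\chi)\underline u-\chi\bar u)$
for $\bar u(t)=\sup_{x\in\R^N}u(x,t)$, $\underline u(t)=\inf_{x\in\R^N}u(x,t)$. On $\R^N$ the supremum and infimum need not be attained, and at near-extremal points one cannot simply discard $\Delta u$ and $\chi\nabla v\cdot\nabla u$; making this rigorous requires a nontrivial localization/approximate-maximum argument (plus a Lipschitz-adapted fluctuation lemma, since $\bar u,\underline u$ are only Lipschitz), none of which is supplied. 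The paper avoids this difficulty entirely: for each $\varepsilon>0$ it uses $\underline u-\varepsilon\le v(x,t)\le\overline u+\varepsilon$ for $t\ge t_\varepsilon$ to freeze the $v$-dependence at asymptotic \emph{constants}, and then compares $u$ with the solutions $\overline w,\underline w$ of the autonomous logistic ODEs
$\overline w'=\overline w\,[a-\chi(\underline u-\varepsilon)-(b-\chi)\overline w]$, $\underline w'=\underline w\,[a-\chi(\overline u+\varepsilon)-(b-\chi)\underline w]$
started from $\sup_x u(\cdot,t_\varepsilon)$ and $\inf_x u(\cdot,t_\varepsilon)$, via the parabolic comparison principle; since the comparison functions are constant in space, no differentiation of $\sup_x u$ or $\inf_x u$ is ever needed, and letting $t\to\infty$ then $\varepsilon\to0$ yields the two inequalities directly (Lemma \ref{Asymp1Lem1}). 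I recommend you replace the Dini/fluctuation machinery by this comparison-with-spatially-constant-supersolutions argument (or else actually prove the Dini inequalities); with the paper's device your fluctuation lemma becomes unnecessary and the rest of your argument goes through unchanged.
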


\medskip

\begin{tm}\label{Main Theorem 4}
 Assume that
\begin{equation}\label{A_Eqtm2}
\chi< \frac{2b}{3+\sqrt{aN+1}}.
\end{equation}
\begin{itemize}
\item[(1)]
Suppose that $u_{0}\in C_{\rm unif}^b(\R^N)$  is  nonnegative and ${\rm supp}(u_0)$ is non-empty.
 There is $c_{\rm low}^*(u_0)>0$ such that  the   unique global classical solution $(u(x,t;u_0)$, $v(x,t;u_0))$ of \eqref{IntroEq1}  satisfies that
\begin{equation}\label{A_Eq8}
\lim_{t\rightarrow\infty}\left[\sup_{|x|\leq ct}|u(x,t;u_0)-\frac{a}{b}|+\sup_{|x|\leq ct}|v(x,t;u_0)-\frac{a}{b}|\right]=0\quad \forall\,\, 0\leq c < c_{\rm low}^{\ast}.
\end{equation}

\item[(2)]
Suppose that $u_{0}\in C_{\rm unif}^b(\R^N)$  is  nonnegative and ${\rm supp}(u_0)$ is non-empty and compact. There is $c_{\rm up}^*(u_0)\ge c_{\rm low}^*(u_0)$ such that  the   unique global classical solution $(u(x,t;u_0)$, $v(x,t;u_0))$ of \eqref{IntroEq1}  satisfies that
\begin{equation}\label{A_Eq9}
\lim_{t\rightarrow\infty}\left[\sup_{|x|\geq ct}u(x,t;u_0)+ \sup_{|x|\geq ct}v(x,t;u_0)\right]=0\quad \forall\,\, c > c_{\rm up}^{\ast}.
\end{equation}
\end{itemize}
\end{tm}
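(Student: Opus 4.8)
I treat \eqref{IntroEq1} as a single nonlocal equation for $u$ and run comparison arguments in moving frames, the crucial point being the a priori control of $v$ and $\nabla v$ by $u$. First I would rewrite the chemotaxis term using the elliptic equation $\Delta v=v-u$, so that $u$ solves
\begin{equation*}
u_t=\Delta u-\chi\,\nabla v\cdot\nabla u+u\big(a-\chi v-(b-\chi)u\big).
\end{equation*}
Representing $v=G*u$ with $G$ the Bessel kernel of $I-\Delta$ gives $0\le v\le\|u\|_\infty$ and $\|\nabla v\|_\infty\le C\|u\|_\infty$, since $G\ge0$, $\int_{\R^N}G=1$, and $\nabla G\in L^1(\R^N)$. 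Note that the hypothesis $\chi<\frac{2b}{3+\sqrt{aN+1}}$ forces $\chi<b$ (indeed $3+\sqrt{aN+1}\ge4$), so $b-\chi>0$. Combined with the global boundedness already established (Theorems \ref{Main Theorem 0}--\ref{Main Theorem 2} and the estimate \eqref{Eq2_Main Theorem}), these bounds give uniform constants $V,K$ with $v\le V$, $|\nabla v|\le K$, which let me freeze $v,\nabla v$ as coefficients and apply the parabolic comparison principle to the scalar operator $w\mapsto w_t-\Delta w+\chi\,\nabla v\cdot\nabla w$.

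For part (2) I would build a radially decaying supersolution. Since $v\ge0$ and $b\ge\chi$, the true reaction is $\le au$, so $u$ is a subsolution of $w_t-\Delta w+\chi\,\nabla v\cdot\nabla w-aw=0$. Taking $\bar u(x,t)=A\,e^{-\mu(|x|-ct)}$, a direct computation using $\Delta\bar u=\mu^2\bar u-\mu\frac{N-1}{|x|}\bar u$ (the curvature term only helps) and $|\nabla v|\le K$ shows $\bar u$ is a supersolution of the same equation as soon as $c\ge\mu+\frac{a}{\mu}+\chi K$. Optimizing in $\mu$ gives the finite threshold; defining $c^*_{\rm up}(u_0)$ as the infimum of admissible speeds, choosing $A$ large so that $\bar u(\cdot,0)\ge u_0$ (possible since ${\rm supp}(u_0)$ is compact), and comparing $u$ with $\bar u$ yields \eqref{A_Eq9}, after which $v=G*u$ inherits the decay.

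For part (1) I would first establish uniform persistence on expanding balls and then upgrade it to convergence. Wherever $v\le V$ with $a-\chi V>0$, the reformulated equation gives the Fisher--KPP-type inequality $u_t\ge\Delta u-\chi\,\nabla v\cdot\nabla u+u\big(a-\chi V-(b-\chi)u\big)$; constructing a compactly supported subsolution from the principal Dirichlet eigenfunction on a large ball and translating it with small speed $c$ shows $\liminf_{t\to\infty}\inf_{|x|\le ct}u>0$ for every $c$ below some $c^*_{\rm low}(u_0)>0$. It is precisely here that the exact constant in $\chi<\frac{2b}{3+\sqrt{aN+1}}$ is used: it forces the effective growth rate to dominate both the drift bound $K$ and the eigenvalue correction, so that the subsolution survives with strictly positive speed. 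Once $u$ is trapped between two positive constants on $\{|x|\le ct\}$, the logistic damping together with the control of $|v-u|$ through $G$ lets me squeeze $u$, and hence $v$, to $\frac{a}{b}$ by a localized version of the argument behind Theorem \ref{Main Theorem 3} (whose hypothesis $b>2\chi$ is implied, since $\frac{2b}{3+\sqrt{aN+1}}\le\frac{b}{2}$), giving \eqref{A_Eq8}.

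The main obstacle is the nonlocal coupling through $v$: the comparison principle applies to the scalar $u$-equation only after $v$ and $\nabla v$ have been frozen as coefficients, so the sub/supersolution constructions and the a priori bounds must be propagated self-consistently. The delicate point is that the exact constant in $\chi<\frac{2b}{3+\sqrt{aN+1}}$ is what simultaneously keeps the advection controlled in the supersolution and leaves a strictly positive lower spreading speed available for the subsolution.
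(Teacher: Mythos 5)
Your treatment of part (2) is correct and in fact more elementary than the paper's: where the paper compares $u$ with a solution of a KPP equation with drift and invokes the spreading result of Berestycki--Nadin to produce the set $\overline S$ whose diameter serves as $c^*_{\rm up}$, you use that $v\ge 0$ and $b>\chi$ to bound the reaction by $au$, dominate the compactly supported $u_0$ by planar exponentials $Ae^{-\mu(e\cdot x-ct)}$ for every unit vector $e$, and compare; with $\|\nabla v\|_\infty\le\sqrt N\,\|u\|_\infty$ (the paper's Lemma \ref{lemma2}) and the uniform bound on $u$ this closes for $c\ge 2\sqrt a+\chi K$, and the decay of $v$ then follows as in the paper's Lemma \ref{Asym of v}(ii). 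Likewise, your persistence step in part (1) identifies exactly the same role of the hypothesis \eqref{A_Eqtm2} as the paper's Lemma \ref{aux-asymptotic-lemma1}, namely that asymptotically $2\sqrt{a-\chi v}>\chi\|\nabla v\|$; the paper gets the lower spreading speed by checking that a generalized principal eigenvalue is negative and citing Berestycki--Hamel--Nadin (Lemma \ref{MainLemma4}), whereas you propose an explicit moving-bump subsolution, which is a legitimate, more self-contained route (modulo the care needed because the drift $\chi\nabla v(x,t)$ is space-time dependent, so the bump must be a subsolution for an arbitrary drift bounded by $\chi K$).

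The genuine gap is the passage from persistence to the convergence \eqref{A_Eq8}. You propose ``a localized version of the argument behind Theorem \ref{Main Theorem 3},'' but that argument is intrinsically global: it uses $\inf_{x\in\R^N}u(\cdot,t)>0$, the two-sided bound $\inf_{\R^N}u\le v\le\sup_{\R^N}u$, and comparison of $u$ with spatially homogeneous logistic ODE solutions, all of which require the differential inequalities to hold for every $x\in\R^N$. Once $u$ is only known to be bounded below on the expanding cone $\{|x|\le ct\}$, two things break: $v$ is nonlocal, so its lower bound on the cone needs a separate argument (manageable, by splitting the Bessel convolution), and, more seriously, the ODE comparison cannot be run on the cone because $u$ on the moving boundary $|x|=ct$ is only bounded by $\tfrac a{b-\chi}$, not small, so the squeeze $\overline u=\underline u=\tfrac ab$ does not localize for free. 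The paper resolves precisely this by a shift-and-compactness argument: assuming \eqref{A_Eq8} fails along $(x_n,t_n)$ with $|x_n|\le ct_n$, it extracts (via local Schauder estimates and Arzel\`a--Ascoli) an entire solution $(\tilde u,\tilde v)$ with $\inf_{\R^{N+1}}\tilde u>0$, applies the Theorem \ref{Main Theorem 3} machinery to $\tilde u$ to force $\tilde u\equiv\tfrac ab$, and reaches a contradiction, after which Lemma \ref{Asym of v}(i) transfers the limit to $v$. Your proposal contains no substitute for this step; to complete it you would need either this compactness/entire-solution argument or a genuine cone-localized comparison (for instance, an oscillation-contraction iteration over nested cones with weighted supersolutions handling the cone boundary), neither of which is sketched.
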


We remark that $(\frac{a}{b},\frac{a}{b})$ is usually called a trivial equilibrium solution of \eqref{IntroEq1}. By Theorem \ref{Main Theorem 3}, when the logistic damping constant $b$ is large, the trivial equilibrium $(\frac{a}{b},\frac{a}{b})$ is globally stable with strictly positive initial data.
As mentioned in the above, for \eqref{IntroEq0} in the bounded domain with
$m(u)=1$, $\chi(u,v)=\chi u$, $f(u,v)=au-bu^2$, and $g(u,v)=u-v$, the global stability of the trivial solution $(\frac{a}{b},\frac{a}{b})$ has been obtained in
\cite{win_JDE2014} when $\tau>0$,   and  in \cite{TeWi} when  $\tau=0$.
 It is worthwhile mentioning that, when $b$ is not large, there may be lots of nontrivial equilibria - at least in bounded domains, quite a few have been detected (see \cite{kuto_PHYSD}, \cite{TeWi}).

\smallskip

We also remark that it is not required that ${\rm supp}(u_0)$ is compact in Theorem \ref{Main Theorem 4}(1). Hence it applies to nonnegative $u_0$
in Theorems \ref{Main Theorem 1} and \ref{Main Theorem 2}. Then by \eqref{A_Eq8},  if $p>N$,\ $p\geq 2$,\ $\chi<\frac{2b}{3+\sqrt{1+Na}}$ and  $u_{0}\in L^{1}(\R^N)\cap L^{p}(\R^N)\setminus\{0\}$, then
\begin{equation}
\label{remark-on-boundedness}
\lim_{t\to \infty}\|u(\cdot,t;u_{0})\|_{L^{p}(\R^N)}=\infty.
\end{equation}

The limit properties in \eqref{A_Eq8} and \eqref{A_Eq9} reflect the spreading feature of the mobile species. In the absence of the chemotaxis
(i.e. $\chi=0$), the first equation in  \eqref{IntroEq1} becomes the following scalar reaction diffusion equation,
\begin{equation}
\label{fisher-eq}
u_{t}=\Delta u + u(a-bu),\quad  x\in\R^N,\,\, t>0,
\end{equation}
which is referred to as Fisher or KPP equations due to  the pioneering works by Fisher (\cite{Fis}) and Kolmogorov, Petrowsky, Piscunov
(\cite{KPP}) on the spreading properties of \eqref{fisher-eq}.
It follows from the works \cite{Fis}, \cite{KPP}, and \cite{Wei1}  that $c^*_{\rm low}(u_0)$ and $c^*_{\rm up}(u_0)$  in Theorem  \ref{Main Theorem 4} can be chosen so that $c^{\ast}_{\rm low}(u_0)=c^{\ast}_{\rm up}(u_0)=2\sqrt a$ for any nonnegative $u_0\in C_{\rm unif}^b(\R^N)$ with ${\rm supp}(u_0)$ being not empty and compact
 ($c^*:=2\sqrt a$ is called the {\it spatial spreading speed} of \eqref{fisher-eq} in literature), and that \eqref{fisher-eq} has traveling wave solutions $u(t,x)=\phi(x-ct)$  connecting $\frac{a}{b}$ and $0$ (i.e.
$(\phi(-\infty)=\frac{a}{b},\phi(\infty)=0)$) for all speeds $c\geq c^*$ and has no such traveling wave
solutions of slower speed.
 Since the pioneering works by  Fisher \cite{Fis} and Kolmogorov, Petrowsky,
Piscunov \cite{KPP},  a huge amount research has been carried out toward the spreading properties of reaction diffusion equations of the form,
\begin{equation}
\label{general-fisher-eq}
u_t=\Delta u+u f(t,x,u),\quad x\in\R^N,
\end{equation}
where $f(t,x,u)<0$ for $u\gg 1$,  $\p_u f(t,x,u)<0$ for $u\ge 0$ (see \cite{Berestycki1, BeHaNa1, BeHaNa2, Henri1, Fre, FrGa, LiZh, LiZh1, Nad, NoRuXi, NoXi1, She1, She2, Wei1, Wei2, Zla}, etc.).

  When $\chi> 0$, up to our best knowledge, the spreading properties of \eqref{IntroEq1} is studied for the first time in this paper. It remains open whether $c^*_{\rm low}(u_0)$ and $c^*_{\rm up}(u_0)$ in \eqref{A_Eq8} and \eqref{A_Eq9} can be chosen so that $c^{\ast}_{\rm low}(u_0)$ and $c^{\ast}_{\rm up}(u_0)$ are independent of
 $u_0$; whether  $c_{\rm low}^*(u_0)=c^*_{\rm up}(u_0)$; and what is the relation between $c^*_{\rm low}(u_0)$, $c^*_{\rm up}(u_0)$ and $2\sqrt a$. These questions are very important in the understanding of the spreading feature of \eqref{IntroEq1} because they are related to the issue whether the chemotaxis speeds up or slows down the spreading of the species. We plan to study these problems in our future works. Another interesting question about \eqref{IntroEq1} is the existence of traveling wave solutions connecting $(\frac{a}{b},\frac{a}{b})$ and $(0,0)$. We also plan to study the existence of such solutions in our future works.

\medskip

The rest of the paper is organized as follows. In section 2, we collect some important results from literature that will be needed in the proofs
of our main results. Section 3 is devoted to the proofs of the local existence theorems (i.e. Theorems \ref{Local existence1} to \ref{Lp-Bound1}). In section 4,
we  prove the global existence  theorems (i.e. Theorems \ref{Main Theorem 0} to \ref{Main Theorem 2}). Finally in section 6, we present the asymptotic behavior of classical solutions and prove Theorems \ref{Main Theorem 3} and \ref{Main Theorem 4}.

\medskip

\noindent {\bf Acknowledgment.} The authors would like to thank Professors J. Ignacio Tello and Michael Winkler
 for valuable discussions, suggestions,  and references. 

\section{Preliminaries}

In this section, we shall prepare several lemmas which will be used often in the next sections. We start by stating some standard definitions from semigroup theory. The reader is referred to \cite{Dan Henry}, \cite{A. Pazy} for the details.

Let X be a Banach space and $\{T(t)\}_{t\geq 0}$ be a $C_{0}-$semigroup on $X$ generated by $A.$ It is well known that $A$ is closed and densely  defined linear operator on $X.$ Furthermore, there are constants
 $M>0$ and $\omega\in \mathbb{R}$ such that $\|T(t)\|\leq Me^{t\omega}$ for every $t\geq 0$ and $(\omega, \infty)\subset \rho(A)$ with
$$
\|(\lambda I-A)^{-1}\|\leq \frac{M}{\lambda-\omega}\ \ \forall\ \lambda>\omega,
$$
where $\rho(A)$ denotes the resolvent set of $A.$
Moreover for every $t>0$ and every continuous function $w \in C( [0 , t] : X) $, the map
\begin{equation} \label{cont-func}
[0, t]\ni s \mapsto T(t-s)w(s)\in X
\end{equation}
is continuous.

For our purpose, we will be concerned with the spaces $C_{\rm unif}^b(\R^{N})$ and $L^{p}(\mathbb{R}^{N})$ for $1\leq p<\infty$,  and the analytic semigroup $T(t)$ generated by  $A=\Delta-I$ on $X=C_{\rm unif}^b(\R^{N})$ or $X=L^{p}(\mathbb{R}^{N})$.  Observe  that
\begin{equation}
\label{semigroup-eq}
(T(t)u)(x)=e^{-t}(G(\cdot,t)\ast u)(x)= \int_{\R^{N}}e^{-t}G(x-y,t)u(y)dy
\end{equation}
for every $u\in X$, $t\ge 0$,  and $x\in\R^N$, where $X=C_{\rm unif}^b(\R^{N})$ or $X=L^{p}(\mathbb{R}^{N})$,  and $G(x,t)$ is the heat kernel defined by
\begin{equation}
\label{heat-kernel}
G(x,t)=\frac{1}{(4\pi t)^{\frac{N}{2}}}e^{-\frac{|x|^{2}}{4t}}.
\end{equation}

Let $X=C_{\rm unif}^b(\R^{N})$ or $X=L^{p}(\mathbb{R}^{N})$ and
 $X^{\alpha}={\rm Dom}((I-\Delta)^{\alpha})$ be the fractional power spaces of $I-\Delta$ on $X$ ($\alpha\in [0,\infty)$).
 Note that $X^0=X$ and $X^1={\rm Dom}(I-\Delta)$.
It is well known that $\Delta$  generates a contraction $C_{0}-$semigroup   defined by the heat kernel, $\{G(t)\}_{t\geq 0},$ on $X$ with spectrum $\sigma(\Delta)=(-\infty, 0]$ (see \cite{Dan Henry}). Thus, the Hille-Yosida theorem implies that the resolvent operator $R(\lambda)$ associated with $\Delta$ is the Laplace transform of $\{G(\cdot,t)\}_{t}.$ Thus the operator $\Delta -I$ is invertible  with
\begin{equation}\label{Eq_Inv}
(I-\Delta)^{-1}u=\int_{0}^{\infty}e^{-t}G(\cdot,t)\ast udt=\int_{0}^{\infty}T(t)udt
\end{equation}
for all $u\in X$. Furthermore the restriction operator $(\Delta-I)^{-1}|_{X^{\alpha}}: X^{\alpha} \rightarrow X^{\alpha}$ is a bounded linear map.

Our approach to prove Theorems \ref{Local existence1}, \ref{Local existence2} and \ref{Local Existence3} is first to prove the existence
 of a mild solution with giving initial function $u_0$ and then to prove the mild solution is a classical solution, which will be partially achieved
  by the tools from semigroup theory. Hence it is necessary to collect some results that will be used from the semigroup theory. In this regards, we recall the following theorems on the existence of  mild and classical solutions of
  \begin{equation}
  \label{aux-mild-solu-eq}
  \begin{cases}
  u_t=(\Delta-1)u+\tilde F(t,u),\quad t>t_0\cr
  u(t_0)=u_0.
  \end{cases}
  \end{equation}
  For given $u_0\in X^\alpha$, $u(t)$ is called a {\it mild solution} of \eqref{aux-mild-solu-eq} on $[t_0,T)$ if
  $u\in C([t_0,T),X^\alpha)$ and
  $$
  u(t)=T(t)u_0+\int_{t_0}^t T(t-s)\tilde F(s,u(s))ds\quad {\rm for}\quad s\in (t_0,T).
  $$

\begin{tm}\label{Existence of Mild Soltion} (\cite[Theorem 3.3.3, Theorem 3.3.4, Lemma 3.5.1]{Dan Henry}) Assume that  $0 \leq \alpha <1$, $U$ an open subset of $\R\times X^{\alpha}$, and $\tilde F: U\rightarrow X$  is locally H\"older continuous and locally  Lipschitz continuous in $u$. Then for any $(t_0,u_{0})\in U$
there exists $T = T(u_{0}) > t_0$ such that \eqref{IVP'} has a unique mild solution $u(t;t_0,u_0)$
on $[t_0,T)$ with initial value $u(t_0;t_0,u_0) =u_{0}$. Moreover, $u(\cdot;t_0,u_0)\in C^1((t_0,T),X)$; $u(t;t_0,u_0)\in {\rm Dom}(\Delta-I)$ for $t\in (t_0,T)$;  \eqref{aux-mild-solu-eq}
holds in $X$ for $t\in [t_0,T)$; and the mappings
$$(t_0,T)\ni t\mapsto u(\cdot;t_0,u_0)\in X^\alpha,\quad (t_0,T)\ni t\mapsto \partial u_t(\cdot;t_0,u_0)\in X^\gamma$$ are locally H\"older continuous for $0<\gamma\ll 1$.
\end{tm}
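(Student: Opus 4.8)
The plan is to run the classical sectorial-semigroup argument for abstract semilinear parabolic equations. Since $A=\Delta-I$ generates an analytic semigroup $T(t)$ on $X$ with $\sigma(A)\subset(-\infty,-1]$, the fractional powers $(I-\Delta)^\beta$ are well defined and obey the smoothing bounds
$$\|(I-\Delta)^\beta T(t)\|\le C_\beta\, t^{-\beta}e^{-t},\qquad t>0,\ \beta\ge 0,$$
together with the Hölder-type estimate $\|(T(t)-I)(I-\Delta)^{-\beta}\|\le C\,t^\beta$ for $0<\beta\le 1$. These two families of inequalities are essentially the only analytic facts the argument needs, and I would quote them from the sectoriality of $A$.

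First I would establish existence and uniqueness of the mild solution by the contraction mapping theorem. Fix $(t_0,u_0)\in U$, choose $r>0$, $\tau_0>0$ so that $\tilde F$ is bounded by some $K$ and Lipschitz in $u$ with constant $L$ on the cylinder $\{(s,w):|s-t_0|\le\tau_0,\ \|w-u_0\|_\alpha\le r\}$, and define
$$(\Phi u)(t)=T(t-t_0)u_0+\int_{t_0}^t T(t-s)\tilde F(s,u(s))\,ds$$
on the closed ball of radius $r$ about the constant map $u_0$ in $C([t_0,t_0+\delta],X^\alpha)$. Applying $(I-\Delta)^\alpha$ under the integral and using the smoothing bound gives $\|(\Phi u)(t)-u_0\|_\alpha\le\|(T(t-t_0)-I)u_0\|_\alpha+C_\alpha K\int_{t_0}^t(t-s)^{-\alpha}\,ds$, and the integral is finite precisely because $\alpha<1$; both terms are driven below $r$ by shrinking $\delta$. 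The same computation with the Lipschitz bound shows $\Phi$ is a contraction for $\delta$ small, so Banach's theorem yields a unique mild solution $u\in C([t_0,t_0+\delta],X^\alpha)$, which one continues to a maximal interval $[t_0,T)$ in the standard way.

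Next I would upgrade the mild solution to a classical one. The key point is that the local Hölder continuity of $\tilde F$ in $t$ and its Lipschitz continuity in $u$, combined with the Hölder continuity of $t\mapsto u(t)\in X^\alpha$ (which itself follows from applying the smoothing bounds to the variation-of-constants formula), force $g(t):=\tilde F(t,u(t))$ to be locally Hölder continuous with values in $X$. Once the forcing term is Hölder, the standard regularity theorem for \eqref{aux-mild-solu-eq} applies: the convolution $\int_{t_0}^t T(t-s)g(s)\,ds$ lands in ${\rm Dom}(A)$ for $t>t_0$, is continuously differentiable into $X$, and the equation $u_t=Au+g(t)$ holds in $X$ on $(t_0,T)$. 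The smoothing bounds simultaneously place $u(t)$ in $X^\beta$ for every $\beta<1$, and the local Hölder continuity of $t\mapsto u_t(t)\in X^\gamma$ for small $\gamma$ follows by differentiating the representation formula and estimating the resulting singular integral.

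The step I expect to be the main obstacle is exactly this last upgrade, namely showing that the inhomogeneous term is differentiable and lands in ${\rm Dom}(A)$. The difficulty is that $AT(t-s)$ is not integrable up to $s=t$, so one cannot differentiate under the integral naively; the standard remedy is to subtract and add $g(t)$, writing the singular part as $\int_{t_0}^t AT(t-s)\bigl[g(s)-g(t)\bigr]\,ds+(T(t-t_0)-I)g(t)$, where the first integrand is now controlled by $\|AT(t-s)\|\,|s-t|^\theta\lesssim (t-s)^{\theta-1}$ and hence integrable. Establishing the requisite Hölder continuity of $g$, and therefore of $u$ in $X^\alpha$, is the technical heart of the matter; everything else reduces to bookkeeping with the semigroup estimates above.
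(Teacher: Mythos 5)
Your proposal addresses a statement the paper does not actually prove: Theorem \ref{Existence of Mild Soltion} is quoted verbatim from Henry's monograph (\cite[Theorems 3.3.3, 3.3.4, Lemma 3.5.1]{Dan Henry}), so there is no in-paper argument to compare against. Your sketch — contraction mapping in $C([t_0,t_0+\delta],X^\alpha)$ using the bound $\|(I-\Delta)^\alpha T(t)\|\le C_\alpha t^{-\alpha}e^{-t}$, followed by the H\"older bootstrap on $g(t)=\tilde F(t,u(t))$ and the decomposition $\int_{t_0}^t AT(t-s)\bigl[g(s)-g(t)\bigr]\,ds+(T(t-t_0)-I)g(t)$ to differentiate the singular convolution — is exactly the classical argument in that reference and is correct in outline.
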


\begin{tm}\label{Existence of Mild Soltion and maximal time}(\cite[Theorem 3.3.4]{Dan Henry})
Assume that $\tilde F$ is in as the previous theorem, and also assume that for every closed bounded set $B \subset U,$ the image $\tilde F(B)$ is bounded in $X$. If $u(t;t_0,u_0)$ is a solution of \eqref{aux-mild-solu-eq} on $[t_0,t_{1})$ and $t_{1}$ is maximal in the sense that there is no solution of $\eqref{IVP'}$ on $[t_0,t_2)$ if $t_2 > t_1$ (when $t_1<\infty$), then either $t_{1}= +\infty$ or else there exists a sequence $t_{n}\rightarrow t_{1}$ as $n\rightarrow +\infty$ such that $u(t_n;t_0,u_0)\rightarrow \partial U.$ (If $U$ is unbounded, the point at infinity is included in $\partial U.)$
\end{tm}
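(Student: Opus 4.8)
The plan is to prove the statement by contraposition, via the standard continuation principle for semilinear parabolic equations: assuming $t_1<\infty$, I will show that if the asserted sequence does \emph{not} exist, then the solution can be extended to some $[t_0,t_2)$ with $t_2>t_1$, contradicting the maximality of $t_1$. The non-existence of a sequence $t_n\to t_1$ with $u(t_n;t_0,u_0)\to\partial U$ (where $\partial U$ includes the point at infinity) means exactly that the orbit neither accumulates at the topological boundary of $U$ nor escapes to infinity as $t\to t_1^-$. Hence there are $R,\rho>0$ and $\tau<t_1$ such that $(t,u(t;t_0,u_0))$ lies, for every $t\in[\tau,t_1)$, in the set
\[
B:=\{(t,x):\ \tau\le t\le t_1,\ \|x\|_{X^\alpha}\le R,\ {\rm dist}\big((t,x),\partial U\big)\ge\rho\}\subset U,
\]
which is closed and bounded in $\R\times X^\alpha$. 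The added hypothesis that $\tilde F$ maps closed bounded subsets of $U$ into bounded subsets of $X$ then yields $K:=\sup_{t\in[\tau,t_1)}\|\tilde F(t,u(t;t_0,u_0))\|_X<\infty$; this boundedness is precisely the ingredient that the mere local-existence theorem lacks.

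Next I would record the smoothing estimates for the analytic semigroup $T(t)$ generated by $\Delta-I$. Because $\sigma(\Delta-I)\subset(-\infty,-1]$, there are constants $M_\beta>0$ with $\|(I-\Delta)^\beta T(t)\|_{X\to X}\le M_\beta\,t^{-\beta}e^{-t}$ for $t>0$, together with the companion continuity bound $\|(T(h)-I)(I-\Delta)^{-\gamma}\|_{X\to X}\le C\,h^{\gamma}$ for $0\le\gamma\le 1$. Writing the mild solution in integral form,
\[
u(t)=T(t-t_0)u_0+\int_{t_0}^{t}T(t-s)\tilde F(s,u(s))\,ds,
\]
I would prove that $u(t)$ is Cauchy in the $X^\alpha$-norm as $t\to t_1^-$. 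For $\tau\le s<t<t_1$ I split $u(t)-u(s)$ into the linear difference $[T(t-t_0)-T(s-t_0)]u_0$, the tail integral $\int_s^t T(t-\theta)\tilde F\,d\theta$, and the main difference $\int_{t_0}^s[T(t-\theta)-T(s-\theta)]\tilde F\,d\theta$. Using $\|\tilde F\|_X\le K$ on $[\tau,t_1)$, the tail term is bounded in $X^\alpha$ by $M_\alpha K\,(t-s)^{1-\alpha}/(1-\alpha)$; the main term, after writing $T(t-\theta)-T(s-\theta)=(T(t-s)-I)T(s-\theta)$ and inserting $(I-\Delta)^{-\gamma}(I-\Delta)^{\gamma}$, is bounded by a constant times $(t-s)^\gamma\int_{t_0}^s(s-\theta)^{-\alpha-\gamma}\,d\theta$ with $\gamma$ chosen so that $\alpha+\gamma<1$; and the linear term is handled by analyticity of $T$ on $(t_0,t_1]$. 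All three tend to $0$ as $s,t\to t_1$, so $u_1:=\lim_{t\to t_1^-}u(t)$ exists in $X^\alpha$, and since $B$ is closed, $(t_1,u_1)\in B\subset U$.

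Finally, I would invoke Theorem \ref{Existence of Mild Soltion} at the interior point $(t_1,u_1)\in U$ to obtain a mild solution $w$ on $[t_1,t_1+\delta)$ with $w(t_1)=u_1$. A limiting argument in the integral equation shows $u$ itself satisfies it up to $t=t_1$, so the concatenation of $u$ (extended by its limit) with $w$ solves the integral equation on $[t_0,t_1+\delta)$; uniqueness makes the two pieces consistent and produces a genuine solution of \eqref{aux-mild-solu-eq} past $t_1$, contradicting maximality. The main obstacle is the middle step: establishing that the limit $\lim_{t\to t_1^-}u(t)$ exists in the strong $X^\alpha$-norm, not merely in $X$. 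This is where the integrable singularity $t^{-\alpha}$ (available only because $\alpha<1$) and the fractional-power continuity estimate for $T(h)-I$ must be combined carefully, and it is exactly the boundedness of $\tilde F$ along the orbit—supplied by the extra hypothesis on $\tilde F(B)$—that makes the relevant integrals finite and uniformly small.
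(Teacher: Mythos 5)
The paper does not actually prove this statement---it is quoted directly from Henry (\cite[Theorem 3.3.4]{Dan Henry})---and your argument is essentially the standard continuation proof given there: negate the conclusion to trap the orbit $\{(t,u(t;t_0,u_0)):\,t\in[\tau,t_1)\}$ in a closed bounded set $B\subset U$ at positive distance from $\partial U$, use the extra hypothesis to bound $\tilde F$ along the orbit, show $u(t)$ is Cauchy in $X^{\alpha}$ as $t\to t_1^{-}$ via the Duhamel splitting together with $\|(I-\Delta)^{\beta}T(t)\|\le M_\beta t^{-\beta}e^{-t}$ and $\|(T(h)-I)(I-\Delta)^{-\gamma}\|\le Ch^{\gamma}$ (with $\alpha+\gamma<1$), and then restart with Theorem \ref{Existence of Mild Soltion} at $(t_1,u_1)$ to contradict maximality; this is correct. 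Two cosmetic repairs: your set $B$ should be intersected with $\overline{U}$ (or replaced by the closure of the orbit segment), since ``distance $\ge\rho$ from $\partial U$'' alone does not force $B\subset U$, and the bound $K$ must also cover $[t_0,\tau]$ (available by continuity of $u$ and the boundedness hypothesis on the compact orbit piece) or else the variation-of-constants formula should be restarted at $\tau$, because your main-difference integral runs from $t_0$.
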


\begin{tm}
\label{more-on-existence-of-mild-solution}(\cite[Theorem 7.1.3]{Dan Henry})
Assume that $0\le \alpha <1$, $\tilde F(t,u)=B(t)u$, and $[t_0,T]\ni t\mapsto B(t)\in \mathcal{L}(X^\alpha,X)$ is H\"older continuous. Then for any $u_0\in X$, there is a unique $u(\cdot;t_0,u_0)\in C([t_0,T],X)$ such that $u(t_0;t_0,u_0)=u_0$; $u(t;t_0,u_0)\in {\rm Dom}(I-\Delta)$; $u(\cdot;t_0,u_0)\in C^1((t_0,T],X)$; and $u(t;t_0,u_0)$ satisfies \eqref{aux-mild-solu-eq}
in $X$ for $t\in [t_0,T]$. Moreover, the mapping $(t_0,T]\ni t\to u(t;t_0,u_0)\in X^\beta$ is locally H\"older continuous for any $0\le \beta<1$.
\end{tm}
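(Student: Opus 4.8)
The plan is to treat \eqref{aux-mild-solu-eq} with $\tilde F(t,u)=B(t)u$ as a linear, non-autonomous perturbation of the analytic semigroup $T(t)$ generated by the sectorial operator $A=\Delta-I$, and to construct the solution through the variation of constants formula
\begin{equation*}
u(t)=T(t-t_0)u_0+\int_{t_0}^t T(t-s)B(s)u(s)\,ds .
\end{equation*}
Since $A$ is sectorial, $T(t)$ is analytic and enjoys the smoothing estimates $\|T(t)\|_{\mathcal L(X,X^\gamma)}\le C_\gamma\, t^{-\gamma}e^{-\delta t}$ for $t>0$ and $0\le\gamma<1$, with some $\delta>0$ coming from the $-I$ term. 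The difficulty compared with the nonlinear local theory of Theorem \ref{Existence of Mild Soltion} is that $u_0$ is only assumed to lie in $X$, so that $T(t-t_0)u_0$ enters $X^\alpha$ with a singular factor $(t-t_0)^{-\alpha}$, while $B(s)u(s)$ is only defined once $u(s)\in X^\alpha$.

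First I would set up a fixed point in the weighted space
\begin{equation*}
Y=\Big\{u\in C((t_0,T_1],X^\alpha)\cap C([t_0,T_1],X):\ u(t_0)=u_0,\ \sup_{t_0<t\le T_1}(t-t_0)^\alpha\|u(t)\|_{X^\alpha}<\infty\Big\},
\end{equation*}
and show that $\Phi u(t)=T(t-t_0)u_0+\int_{t_0}^t T(t-s)B(s)u(s)\,ds$ is a contraction on $Y$ once $T_1-t_0$ is small. The key estimate uses $\|T(t-s)\|_{\mathcal L(X,X^\alpha)}\le C(t-s)^{-\alpha}$ together with the boundedness of $B$, leading to the Beta-type integral $\int_{t_0}^t (t-s)^{-\alpha}(s-t_0)^{-\alpha}\,ds=c_\alpha\,(t-t_0)^{1-2\alpha}$, which is finite precisely because $\alpha<1$; since $(t-t_0)^{1-2\alpha}=(t-t_0)^{-\alpha}(t-t_0)^{1-\alpha}$ and $1-\alpha>0$, the integral term is dominated by the weight $(t-t_0)^{-\alpha}$ with a prefactor that is small for small $T_1-t_0$. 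This gives a unique mild solution on a short interval. Because the perturbation is \emph{linear}, the weighted norm obeys a Gronwall inequality with no superlinear feedback, so the a priori bound on $\|u(t)\|_{X^\alpha}$ cannot blow up; since $\|B(s)\|_{\mathcal L(X^\alpha,X)}$ is bounded uniformly on $[t_0,T]$, the short-time steps have a uniform length and can be concatenated to cover all of $[t_0,T]$, yielding global existence and uniqueness in $C([t_0,T],X)$.

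It then remains to upgrade the mild solution to a classical one. I would show that $t\mapsto B(t)u(t)$ is locally H\"older continuous from $(t_0,T]$ into $X$: on compact subsets of $(t_0,T]$ the map $t\mapsto u(t)$ is H\"older continuous into $X^\alpha$ (estimate the increments $u(t+h)-u(t)$ via the analyticity of $T$ and the mild formula), and $B$ is H\"older by hypothesis, so the composition is H\"older. With a locally H\"older continuous forcing term, the standard regularity theory for analytic semigroups (see \cite{Dan Henry}) gives $u(t)\in{\rm Dom}(A)$ for $t>t_0$, $u\in C^1((t_0,T],X)$, and $u_t=Au+B(t)u$ in $X$. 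The local H\"older continuity of $t\mapsto u(t)\in X^\beta$ for $0\le\beta<1$ follows from $u(t)\in{\rm Dom}(A)=X^1\subset X^\beta$ together with the increment estimates measured in the $X^\beta$ norm. The hard part will be the bookkeeping near $t=t_0$: closing the weighted fixed point in spite of the $(t-t_0)^{-\alpha}$ singularity, and then proving the H\"older continuity of $u$ into $X^\alpha$ with constants that are locally uniform away from $t_0$, which is exactly the input needed for the classical regularity step.
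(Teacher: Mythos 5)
The paper never proves this statement: it is quoted verbatim from Henry's book (\cite[Theorem 7.1.3]{Dan Henry}) and used as a black box (e.g.\ in the proof of Theorem \ref{Local Existence3}), so there is no internal proof to compare against. Your outline is essentially the standard textbook argument behind Henry's theorem: variation of constants, a contraction in the weighted space with norm $\sup_{t_0<t\le T_1}(t-t_0)^{\alpha}\|u(t)\|_{X^{\alpha}}$, where the Beta integral $\int_{t_0}^{t}(t-s)^{-\alpha}(s-t_0)^{-\alpha}\,ds=c_{\alpha}(t-t_0)^{1-2\alpha}$ closes the estimate precisely because $\alpha<1$; continuation over $[t_0,T]$ using linearity and the uniform bound on $\|B(t)\|_{\mathcal L(X^{\alpha},X)}$; then interior smoothing ($u(t)\in X^{\gamma}$ for every $\gamma<1$ when $t>t_0$) giving local H\"older continuity of $t\mapsto u(t)\in X^{\alpha}$ away from $t_0$, hence local H\"older continuity of $t\mapsto B(t)u(t)$ into $X$, after which the linear regularity theory for analytic semigroups upgrades the mild solution to a classical one and yields the $X^{\beta}$-H\"older statement. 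The one step you should tighten is the uniqueness class: the contraction gives uniqueness only among mild solutions obeying the weighted bound, whereas the theorem asserts uniqueness among all $u\in C([t_0,T],X)$ that are classical on $(t_0,T]$, and such a competitor is not a priori known to satisfy $\|u(t)\|_{X^{\alpha}}=O((t-t_0)^{-\alpha})$. The standard fix is to take two solutions with the same data, write the integral equation for their difference starting from an interior time $\epsilon>t_0$ (legitimate since the difference lies in ${\rm Dom}(I-\Delta)\subset X^{\alpha}$ there), apply the singular Gronwall inequality (the paper's Lemma \ref{Lem2}) to get $\|u_1(t)-u_2(t)\|_{X^{\alpha}}\le C(t-\epsilon)^{-\alpha}\|u_1(\epsilon)-u_2(\epsilon)\|_{X}$ with $C$ independent of $\epsilon$, and then let $\epsilon\to t_0^{+}$ using continuity into $X$ at $t_0$. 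With that addition your proposal is complete and follows the same route as Henry's proof.
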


 For given $u_0\in X^\alpha$ ($0\le \alpha<1$) and $T>0$, $[0,T)\ni t\mapsto  (u(\cdot,t;u_0),v(\cdot,t;u_0))\in X^\alpha\times X^\alpha$ is
 called a {\it mild solution} of \eqref{IntroEq1} on $[0,T)$ with initial function $u_0$  if $u(\cdot,\cdot;u_0),v(\cdot,\cdot;u_0)\in C([0,T),X^\alpha)$, $v(\cdot,t;u_0)=-(\Delta-I)^{-1}(u(\cdot,t;u_0))$, and $u(t):=u(\cdot,t;u_0)$ satisfies
$$
u(t)=T(t)u_{0} +\int_{0}^{t}T(t-s)\Big(\chi\nabla\cdot ( u(s)\nabla(\Delta-I)^{-1}(u(s)))+(a+1) u(s) -b u^{2}(s)\Big)ds
$$
for $t\in [0,T)$.
Note that, if $(u\cdot,\cdot;u_0),v(\cdot,\cdot;u_0))$ is a mild solution of \eqref{IntroEq1} with initial function $u_0\in X^\alpha$, then $u(t):=u(\cdot,t;u_0)$ is a mild solution of the Cauchy problem (initial value problem)
\begin{equation}\label{IVP'}
\begin{cases}
u_{t}=(\Delta-I) u + F(u) \qquad \text{in} \,\, (0, T) \\
u(0)=u_{0}\quad {\rm in}\quad X^\alpha,
\end{cases}
\end{equation}
  where $F(u)=\chi \nabla\cdot (u\nabla(\Delta-I)^{-1}u )+(a +1)u -b u^{2}$. Conversely, for given $u_0\in X^\alpha$, if $u(t)$ is a mild solution of \eqref{IVP'}, then $(u(\cdot,t;u_0),v(\cdot,t;u_0))$ is a mild solution of \eqref{IntroEq1} with initial function $u_0$, where $u(\cdot,t;u_0)=u(t)$ and
$v(\cdot,t;u_0)=-(\Delta-I)^{-1}(u(t))$.

We next present some important embedding results on the fractional spaces in the case that $X=L^{p}(\R^N)$ (see \cite{Dan Henry}). Let $A=\Delta-I$. We have that ${\rm Dom}(A)=W^{2,p}(\R^N)$ ($1\leq p<\infty$)\  and the following continuous imbeddings
\begin{eqnarray}\label{Fractional power Imbedding}
X^{\alpha} \subset C^{\nu} \quad \text{if} \quad 0\leq \nu < 2\alpha -\frac{N}{p};\\
X^{\alpha}\subset W^{1,q}(\R^N) \quad \text{if}\ \alpha>\frac{1}{2} \ \text{and}\ \frac{1}{q}>\frac{1}{p}-\frac{(2\alpha-1)}{N};\\
X^{\alpha}\subset L^{q}\quad \text{if} \ \frac{1}{q}>\frac{1}{p}-\frac{2\alpha}{N}, \ \ q\geq p.
\end{eqnarray}
We have that $X^{\frac{1}{2}}=W^{1,p}(\R^{N}).$   Furthermore, there is a constant $C_{\alpha}$ such that
\begin{equation}\label{Eq_00}
\|(T(t)-I)u\|_{L^{p}}\leq C_{\alpha}t^{\alpha}\|u\|_{X^{\alpha}} \quad \text{for all}\ u\in X^{\alpha}.
\end{equation}
Using the $L^{p}-L^{q}$ estimates  for the convolution product, concretely,
\begin{equation}\label{Convolution Product Lp estimates}
\|f\ast g\|_{L^{q}(\R^{N}}\leq \|f\|_{L^{r}(\R^{N})}\|g\|_{L^{p}(\R^{N})} \ \ \ \forall \ 1\leq p, q, r \leq +\infty \ \ \text{and}\ \ \frac{1}{q}+1=\frac{1}{p}+\frac{1}{r},
\end{equation}
 we can easily show that the analytic semigroup $\{T(t)\}_{t\geq 0} $ generated by $\Delta-I$ on $L^p(\R^N)$ enjoys the following $L^{p}-L^q$ a priori estimates:
 \begin{equation}\label{Lp Estimates}
 \begin{cases}
 \|T(t)u \|_{L^{q}(\mathbb{R}^{N})}\leq C t^{-(\frac{1}{p}-\frac{1}{q})\frac{N}{2}}e^{-t}\|u\|_{L^{p}(\mathbb{R}^{N})} \ \ \text{for}\ \ 1\leq p\leq q\leq +\infty\cr\cr
 \|(I-\Delta)^{\alpha} T(t)u \|_{L^q}\leq C_{\alpha} t^{-\alpha-(\frac{1}{p}-\frac{1}{q})\frac{N}{2}}e^{-t}\|u\|_{L^{p}(\mathbb{R}^{N})} \ \ \text{for}\ \ 1\leq p\leq q\leq +\infty
 \end{cases}
 \end{equation}
 for every $t>0$ and $\alpha\geq 0$, where $C$ and $C_{\alpha}$ are constant depending only on $p,q$ and $N.$ In fact the first inequality in $(\ref{Lp Estimates})$ is a direct consequence of $(\ref{Convolution Product Lp estimates})$, while the second is a result of the combination of Theorem 1.4.3 in \cite{Dan Henry} and the first inequality.

We now state a result that will be needed in the proof of time global existence theorem. The result is a variant of Gagliardo-Nirenberg inequality.

\medskip

 \begin{lem}\label{Pre1} (\cite[Lemma 2.4]{Sug}) Let $N\geq1$, $a_0>2$,  $u\in L^{q_{1}}(\mathbb{R}^{N})$ with $q_{1}\geq 1$ and $u^{\frac{r}{2}}\in H^{1}(\mathbb{R}^{N})$
with $r>1.$ If $q_{1}\in [1, r], q_{2}\in[\frac{r}{2}, \ a_0\frac{r}{2}]$ and
\begin{equation}
\begin{cases}
1\leq q_{1}\leq q_{2}\leq \infty \ \ \text{when N=1,}\\
1\leq q_{1}\leq q_{2} < \infty \ \ \text{when N=2,}\\
1\leq q_{1}\leq q_{2}\leq \frac{Nr}{N-2} \ \ \text{when}\  N\geq 3,
\end{cases}
\end{equation}
then, it holds that
\begin{equation}
\|u\|_{L^{q_{2}}(\mathbb{R}^{N})}\leq C^{\frac{2}{r}}\|u\|^{1-\theta}_{L^{q_{1}}(\mathbb{R}^{N})}\|\nabla u^{\frac{r}{2}}\|^{\frac{2\theta}{r}}_{L^{2}(\mathbb{R}^{N})}
\end{equation}
with
$$
\theta=\frac{r}{2}\left( \frac{1}{q_{1}}-\frac{1}{q_{2}} \right)\left(\frac{1}{N}-\frac{1}{2}+\frac{r}{2q_{1}} \right)^{-1},
$$
where
\begin{equation*}
\begin{cases}
C \ \text{depends only on}\,\,  N\,\, {\rm  and}\,\, a_0 \ \  \text{when } q_{1}\geq \frac{r}{2},\\
C=C_{0}^{\frac{1}{\beta}}\,\, \ \text{with}\,\, C_{0}\,\, \text{depending only on}\,\,  N\,\,{\rm and}\,\, a_0 \,\,\text{when }1\leq q_{1}< \frac{r}{2},
\end{cases}
\end{equation*}
and
\begin{equation*}
\beta=\frac{q_{2}-\frac{r}{2}}{q_{2}-q_{1}}\left[ \frac{2q_{1}}{r} +\left( 1-\frac{2q_{1}}{r}\right)\frac{2N}{N+2}\right].
\end{equation*}
\end{lem}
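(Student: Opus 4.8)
The plan is to reduce the stated inequality to the classical Gagliardo--Nirenberg inequality for $H^1$ functions via the substitution $w=u^{r/2}$. First I would record the elementary identity $\|u\|_{L^{q}(\R^N)}=\|w\|_{L^{2q/r}(\R^N)}^{2/r}$, valid for every $q>0$ (since $\int|w|^{2q/r}=\int|u|^{q}$), so that with $s:=2q_2/r$ and $s_1:=2q_1/r$ one has $\|u\|_{L^{q_2}}=\|w\|_{L^{s}}^{2/r}$ and $\|u\|_{L^{q_1}}=\|w\|_{L^{s_1}}^{2/r}$, while $\nabla w=\nabla u^{r/2}\in L^2$ by hypothesis. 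The admissibility ranges in the statement translate exactly into those for which the classical inequality holds: the bound $q_2\le a_0 r/2$ becomes $s\le a_0$, and for $N\ge 3$ the bound $q_2\le \frac{Nr}{N-2}$ becomes $s\le \frac{2N}{N-2}=2^\ast$, the critical Sobolev exponent, with the corresponding relaxations ($s\le\infty$ for $N=1$, $s<\infty$ for $N=2$).

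The core step is to apply the classical Gagliardo--Nirenberg inequality
\begin{equation*}
\|w\|_{L^{s}(\R^N)}\le C\,\|\nabla w\|_{L^2(\R^N)}^{\theta}\,\|w\|_{L^{s_1}(\R^N)}^{1-\theta},\qquad \frac{1}{s}=\theta\Big(\frac12-\frac1N\Big)+(1-\theta)\frac{1}{s_1},
\end{equation*}
and to solve the scaling relation for $\theta$. Substituting $1/s=r/(2q_2)$ and $1/s_1=r/(2q_1)$ and rearranging gives $-\frac r2\big(\frac1{q_1}-\frac1{q_2}\big)=-\theta\big(\frac1N-\frac12+\frac r{2q_1}\big)$, hence precisely $\theta=\frac r2\big(\frac1{q_1}-\frac1{q_2}\big)\big(\frac1N-\frac12+\frac r{2q_1}\big)^{-1}$, the exponent in the statement. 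Raising the displayed inequality to the power $2/r$ and using the norm identities then yields the asserted estimate with constant $C^{2/r}$. When $q_1\ge r/2$, i.e. $s_1\ge 1$, this argument is immediate; and since in this regime $q_1\in[r/2,r]$ forces $s_1\in[1,2]$ and $s\in[1,a_0]$, the exponents range over a fixed compact set bounded away from the critical exponent, so the Gagliardo--Nirenberg constant $C$ depends only on $N$ and $a_0$. This is the first case of the stated constant.

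The main obstacle is the case $1\le q_1<r/2$, where $s_1=2q_1/r<1$ and the classical inequality cannot be invoked directly because $L^{s_1}$ is not a normed space. Here I would instead apply the classical inequality at base exponent $1$ (that is, at $q=r/2$, where $s_1=1$), giving $\|w\|_{L^s}\le C_0\,\|\nabla w\|_{L^2}^{\theta'}\,\|w\|_{L^1}^{1-\theta'}$, and then interpolate $\|u\|_{L^{r/2}}=\|w\|_{L^1}^{2/r}$ between $\|u\|_{L^{q_1}}$ and $\|u\|_{L^{q_2}}$ by H\"older, which is legitimate because $q_1<r/2\le q_2$; this produces a factor $\|u\|_{L^{q_2}}^{(1-\mu)(1-\theta')}$ on the right, with $\mu\in(0,1)$ determined by $\frac2r=\frac{\mu}{q_1}+\frac{1-\mu}{q_2}$. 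I would then absorb this factor into the left-hand side: the resulting exponent $\beta=1-(1-\mu)(1-\theta')$ on $\|u\|_{L^{q_2}}$ is positive, so raising to the power $1/\beta$ turns the constant into $C_0^{1/\beta}$ and the remaining exponents into their claimed values. The two routine points to verify are that this absorption exponent coincides with the $\beta$ in the statement and that the exponent of $\|\nabla u^{r/2}\|_{L^2}$ is again $2\theta/r$; the latter is forced by scaling invariance, which guarantees that both cases produce the same $\theta$ regardless of the derivation.
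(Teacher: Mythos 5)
The paper offers no proof of this lemma to compare against: it is quoted directly from Sugiyama \cite[Lemma 2.4]{Sug} and used as a black box. Your self-contained derivation is essentially correct and is the natural route to such a statement: the substitution $w=u^{r/2}$ reduces the claim to the classical Gagliardo--Nirenberg inequality, the scaling relation does yield the stated $\theta$, and in the regime $1\le q_1<r/2$ your absorption exponent $1-(1-\mu)(1-\theta')$ really does coincide with the stated $\beta$, with $\theta'/\beta=\theta$ and $\mu(1-\theta')/\beta=1-\theta$, so the final exponents and the constant $C_0^{1/\beta}$ come out exactly as claimed (I checked this bookkeeping). Three points should be tightened. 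First, for $N\ge 3$ the hypotheses allow $q_2=\frac{Nr}{N-2}$, i.e. $s=\frac{2N}{N-2}$, so the exponents are not ``bounded away from the critical exponent''; this is harmless, since at the endpoint the interpolation inequality degenerates into the Sobolev inequality ($\theta=1$), and uniformity of the constant in $s\in[1,\frac{2N}{N-2}]$, $s_1\in[1,2]$ is cleanest to see by writing $\|w\|_{L^{s}}\le\|w\|_{L^{s_1}}^{1-\theta}\|w\|_{L^{2N/(N-2)}}^{\theta}$ and applying Sobolev, with the $a_0$-dependence entering only through the subcritical ranges when $N=1,2$. Second, the absorption step divides by a power of $\|u\|_{L^{q_2}}$, so you should first record that this norm is finite and treat the zero case trivially: $u^{r/2}\in L^{2}$ gives $u\in L^{r}$, interpolation with $u\in L^{q_1}$ gives $u\in L^{r/2}$, and your first inequality then bounds $\|u\|_{L^{q_2}}$. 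Third, when $q_1<r/2$ and $q_2=r/2$ your $\theta'$, and hence $\beta$, vanish and the absorption fails; but this is exactly the degenerate boundary case in which the stated constant $C_0^{1/\beta}$ is itself undefined, so restricting to $q_2>r/2$ there leaves you no worse off than the cited lemma.
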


We end this section by stating an important result that will be needed to complete the proof of the main theorem.

\begin{lem}(\cite[Exercise $4^{*}$, page 190]{Dan Henry})
\label{Lem2}
Assume that $a_{1},a_{2},\alpha,\beta$ are nonnegative constants , with $0\leq \alpha, \ \beta<1,$ and $0<T<\infty$.  There exists a constant $M(a_{2},\beta,T)<\infty$ so that for any integrable function $u : [0,\ T]\rightarrow \R$ satisfying that
$$
0\leq u(t)\leq a_{1}t^{-\alpha} +a_{2}\int_{0}^{t}(t-s)^{-\beta}u(s)ds
$$
for a.e t in $[0, \ T],$ we have
$$
0\leq u(t)\ \leq\ \frac{a_{1} M}{1-\alpha}t^{-\alpha}, \ \ \text{a.e. on }\ 0< t< T.
$$
\end{lem}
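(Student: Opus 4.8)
The plan is to prove this singular Gronwall inequality by Picard-type iteration of the integral operator, exploiting its positivity. I introduce the linear operator $B$ acting on nonnegative functions by $(Bw)(t)=a_2\int_0^t (t-s)^{-\beta}w(s)\,ds$. Since $a_2\ge 0$ and the kernel $(t-s)^{-\beta}$ is nonnegative, $B$ is monotone: $0\le w_1\le w_2$ forces $Bw_1\le Bw_2$. Writing $\phi_0(t):=t^{-\alpha}$, the hypothesis reads $0\le u\le a_1\phi_0+Bu$ a.e.\ on $[0,T]$. Applying $B$ repeatedly and using monotonicity and nonnegativity, I would obtain for every $n\ge 1$ the iterated inequality
$$
u(t)\le a_1\sum_{k=0}^{n-1}(B^k\phi_0)(t)+(B^n u)(t),
$$
in which all terms are nonnegative (the convolution defining $B^n u$ is well defined a.e.\ since its kernel is locally integrable and $u\in L^1$). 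The proof then splits into two parts: (i) summing the explicit series $\sum_k B^k\phi_0$, and (ii) showing the remainder $B^n u$ vanishes as $n\to\infty$.

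For step (i), the key computation is the Beta-function identity $\int_0^t (t-s)^{-\beta}s^{-\sigma}\,ds=\frac{\Gamma(1-\sigma)\Gamma(1-\beta)}{\Gamma(2-\sigma-\beta)}\,t^{1-\beta-\sigma}$, valid because $\sigma,\beta<1$, obtained by the substitution $s=t\tau$. Writing $\gamma:=1-\beta\in(0,1]$, a straightforward induction then yields the closed form
$$
(B^k\phi_0)(t)=a_2^k\,\frac{\Gamma(1-\alpha)\,\Gamma(\gamma)^k}{\Gamma(1-\alpha+k\gamma)}\,t^{-\alpha+k\gamma}.
$$
On $[0,T]$ one has $t^{k\gamma}\le T^{k\gamma}$, so the $k$-th term is at most $\Gamma(1-\alpha)\,t^{-\alpha}\,\dfrac{(a_2\Gamma(\gamma)T^\gamma)^k}{\Gamma(1-\alpha+k\gamma)}$. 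Since $\Gamma(1-\alpha+k\gamma)$ grows super-exponentially in $k$, the series is a convergent Mittag-Leffler-type sum. To make its total independent of $\alpha$, I would use $\Gamma(1-\alpha)=\Gamma(2-\alpha)/(1-\alpha)\le 1/(1-\alpha)$ together with the uniform lower bounds $\Gamma(1-\alpha+k\gamma)\ge \min_{x>0}\Gamma(x)$ for the finitely many small $k$ and $\Gamma(1-\alpha+k\gamma)\ge\Gamma(k\gamma)$ once $k\gamma$ is past the minimizer of $\Gamma$. This gives $a_1\sum_{k=0}^{\infty}(B^k\phi_0)(t)\le \frac{a_1 M}{1-\alpha}\,t^{-\alpha}$ with $M=M(a_2,\beta,T)$ depending only on $a_2,\beta,T$, which is exactly the asserted bound.

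For step (ii), I would record the iterated kernel of $B$, namely $(B^n u)(t)=a_2^n\frac{\Gamma(\gamma)^n}{\Gamma(n\gamma)}\int_0^t (t-s)^{n\gamma-1}u(s)\,ds$, proved by the same induction and Beta identity. Once $n$ is large enough that $n\gamma\ge 1$, the kernel satisfies $(t-s)^{n\gamma-1}\le T^{n\gamma-1}$, whence $0\le (B^n u)(t)\le \frac{(a_2\Gamma(\gamma))^n T^{n\gamma-1}}{\Gamma(n\gamma)}\,\|u\|_{L^1(0,T)}\to 0$ uniformly on $[0,T]$, because $\Gamma(n\gamma)$ outgrows the geometric factor. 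Letting $n\to\infty$ in the iterated inequality then yields $u(t)\le \frac{a_1 M}{1-\alpha}t^{-\alpha}$ a.e.

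I expect the only genuine difficulty to be the bookkeeping: verifying the two closed forms by induction and, above all, extracting the factor $\frac{1}{1-\alpha}$ so that the final constant $M$ depends on $a_2,\beta,T$ alone and not on $\alpha$ or $a_1$. Once the super-exponential growth of the Gamma function is in hand, both the convergence of the series and the limit passage $B^n u\to 0$ are routine.
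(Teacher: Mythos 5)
Your proposal is correct: the iteration $u\le a_1\sum_{k=0}^{n-1}B^k\phi_0+B^nu$, the Beta-function closed forms for $B^k\phi_0$ and for the iterated kernel, the Mittag-Leffler-type summation, and the extraction of the factor $\frac{1}{1-\alpha}$ via $\Gamma(1-\alpha)=\Gamma(2-\alpha)/(1-\alpha)\le 1/(1-\alpha)$ together with a lower bound on $\Gamma$ uniform in $\alpha$ all check out, so the constant $M$ indeed depends only on $a_2,\beta,T$. The paper itself gives no proof of this lemma (it is quoted from Henry's book, Exercise $4^{*}$, page 190), and your argument is exactly the standard singular-Gronwall iteration that the cited source is based on, so there is nothing to compare beyond noting that your handling of the uniformity in $\alpha$ is the only genuinely delicate point and you treat it adequately.
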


\section{Local existence of classical solutions}

In this section, we investigate the local existence and uniqueness of classical solutions of \eqref{IntroEq1} with various  given initial functions
 and prove Theorems \ref{Local existence1}, \ref{Local existence2}, \ref{Local Existence3}, and \ref{Lp-Bound1}. We first establish some  important lemmas.

\begin{lem}\label{L_Infty bound} Let $p\in[1,\infty)$ and  $\{T(t)\}_{t>0}$ be the semigroup in \eqref{semigroup-eq} generated by $\Delta-I$ on $L^p(\R^N)$. For every $t>0$, the operator $T(t) \nabla \cdot $ has a unique bounded extension on $\big(L^{p}(\R^{N})\big)^N$ satisfying
\begin{equation}\label{EqL_Infty_001}
\|T(t)\nabla \cdot  u\|_{L^{p}(\R^{N})}\leq C_{1}t^{-\frac{1}{2}}e^{-t}\|u\|_{L^{p}(\R^{N})} \ \ \ \forall \ u\in\big( L^{p}(\R^{N})\big)^N, \ \forall \ t>0,
\end{equation}
 where $C_1$ depends only on $p$ and $N$.
 Furthermore, for every $q\in [p , \infty]$, we have that $T(t)\nabla\cdot  u\in L^{q}(\R^N)$ with
\begin{equation} \label{EqL_Infty_002}
\|T(t)\nabla \cdot u\|_{L^{q}}\leq C_{2}t^{-\frac{1}{2}-\frac{N}{2}(\frac{1}{p}-\frac{1}{q})}e^{-t}\|u\|_{L^{p}(\R^{N})} \ \ \ \forall \ u\in \big(L^{p}(\R^{N})\big)^N, \ \forall \ t>0,
\end{equation}
where $C_{2}$ is constant depending only on $N$, $q$ and $p$.
\end{lem}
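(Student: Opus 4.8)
The plan is to transfer the divergence off $u$ and onto the heat kernel, so that the unbounded composition $T(t)\nabla\cdot$ is realized as convolution against $\nabla G(\cdot,t)$, and then to read off both estimates from Young's inequality \eqref{Convolution Product Lp estimates} once the $L^r$-norms of the kernel's derivatives are computed. The only genuine calculation is a scaling computation for $\partial_{x_i}G(\cdot,t)$.

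First I would record, from \eqref{heat-kernel}, the pointwise identity $\partial_{x_i}G(x,t)=-\tfrac{x_i}{2t}G(x,t)$, and then compute via the substitution $x=\sqrt{t}\,y$ that, for every $r\in[1,\infty]$,
\[
\|\partial_{x_i}G(\cdot,t)\|_{L^{r}(\R^N)}=c_{r,N}\,t^{-\frac12-\frac{N}{2}\left(1-\frac1r\right)},
\]
where $c_{r,N}$ depends only on $r$ and $N$ and is finite because $|y_i|^{r}e^{-r|y|^2/4}$ is integrable. This scaling produces exactly the power of $t$ appearing in \eqref{EqL_Infty_002}, and for $r=1$ the power is $-\tfrac12$, matching \eqref{EqL_Infty_001}.

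Next I would introduce the candidate extension
\[
S(t)u:=e^{-t}\sum_{i=1}^{N}\bigl(\partial_{x_i}G(\cdot,t)\bigr)\ast u_i,\qquad u=(u_1,\dots,u_N)\in\bigl(L^{p}(\R^N)\bigr)^{N}.
\]
Given $q\in[p,\infty]$, I choose $r$ by $\tfrac1r=1+\tfrac1q-\tfrac1p$; since $p\le q$ one checks $r\in[1,\infty]$, so \eqref{Convolution Product Lp estimates} applies and gives $\|(\partial_{x_i}G(\cdot,t))\ast u_i\|_{L^{q}}\le\|\partial_{x_i}G(\cdot,t)\|_{L^{r}}\|u_i\|_{L^{p}}$. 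With this choice of $r$ one has $1-\tfrac1r=\tfrac1p-\tfrac1q$, so inserting the norm computed above and summing over $i$ yields
\[
\|S(t)u\|_{L^{q}(\R^N)}\le C\,t^{-\frac12-\frac{N}{2}\left(\frac1p-\frac1q\right)}e^{-t}\sum_{i=1}^{N}\|u_i\|_{L^{p}(\R^N)}.
\]
The case $q=p$ (so $r=1$) is precisely \eqref{EqL_Infty_001}, and the general case is \eqref{EqL_Infty_002}; hence $S(t)$ is bounded with the asserted norm bounds. It then remains to identify $S(t)$ with the extension of $T(t)\nabla\cdot$: on the dense subspace of those $u$ with each $u_i\in C_c^{\infty}(\R^N)$ one has $\nabla\cdot u\in L^{p}(\R^N)$, and the convolution integration-by-parts identity $G(\cdot,t)\ast\partial_{x_i}u_i=(\partial_{x_i}G(\cdot,t))\ast u_i$, whose boundary terms vanish by the rapid decay of $G$ and its derivatives, shows via \eqref{semigroup-eq} that $T(t)(\nabla\cdot u)=S(t)u$ there. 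Since $(C_c^{\infty}(\R^N))^{N}$ is dense in $(L^{p}(\R^N))^{N}$ and $S(t)$ is bounded, $S(t)$ is the unique bounded extension of $T(t)\nabla\cdot$, which finishes the argument.

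The step demanding the most care is this last identification. Because $\nabla\cdot$ is an unbounded operator, the entire content of the lemma is that after moving the derivative onto the kernel one obtains a bounded operator; so one must justify both the integration-by-parts identity for the convolution and the density argument, in order to know that the extension is well defined and that it is unique rather than merely one admissible choice. Everything else reduces to the elementary $L^{r}$-scaling of $\partial_{x_i}G(\cdot,t)$, which is what governs the singular rate $t^{-\frac12-\frac{N}{2}(1/p-1/q)}$ as $t\to0^{+}$.
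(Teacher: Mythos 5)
Your proposal is correct and follows essentially the same route as the paper: move the derivative onto the heat kernel (integration by parts / the identity $G(\cdot,t)\ast\partial_{x_i}u_i=(\partial_{x_i}G(\cdot,t))\ast u_i$ on a dense class), compute $\|\partial_{x_i}G(\cdot,t)\|_{L^r}$ by the scaling substitution, and conclude with Young's inequality \eqref{Convolution Product Lp estimates} with $\tfrac1q+1=\tfrac1r+\tfrac1p$, then extend by density. Your explicit remark on uniqueness of the bounded extension is a slight elaboration of what the paper leaves implicit, but the substance is identical.
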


\begin{proof}
Since $C^{\infty}_{c}(\R^{N})$ is dense in $L^{p}(\R^N),$ it is enough to prove that inequalities \eqref{EqL_Infty_001} and \eqref{EqL_Infty_002} hold on $\big(C^{\infty}_{c}(\R^{N})\big)^N.$
For every $u=(u_1,u_2,\cdots,u_N)\in \big(C^{\infty}_{c}(\R^{N})\big)^N$ and $t>0$, using integration by parts, we obtain that
\begin{equation*}\label{Eq_Linfty00}
T(t)\partial_{x_{i}} u_i  =  \frac{e^{-t}}{\left(4\pi t\right)^{\frac{N}{2}}}\int_{\R^{N}}e^{-\frac{|z|^{2}}{4t}}\partial_{x_{i}}u_i(x-z)dz
 =\frac{e^{-t}}{2t\left(4\pi t\right)^{\frac{N}{2}}}\int_{\R^{N}}z_{i}e^{-\frac{|z|^{2}}{4t}}u_i(x-z)dz.
\end{equation*}
Using the $L^{p}-L^q$ estimates \eqref{Convolution Product Lp estimates}, we have that
\begin{equation}\label{Eq_Linfty01}
\|T(t)\partial_{x_{i}} u_i \|_{L^{p}(\R^{N})}\leq\frac{e^{-t}}{2t\left(4\pi t\right)^{\frac{N}{2}}}\|H_{i}(\cdot,t)\|_{L^{1}(\R^{N})} \|u_i\|_{L^{p}(\R^{N})}
\end{equation}
and
\begin{equation}\label{Eq_Linfty02}
\|T(t)\partial_{x_{i}} u_i \|_{L^{q}(\R^{N})}\leq\frac{e^{-t}}{2t\left(4\pi t\right)^{\frac{N}{2}}}\|H_{i}(\cdot,t)\|_{L^{r}(\R^{N})} \|u_i\|_{L^{p}(\R^{N})} ,
\end{equation}
where $\frac{1}{q}+1=\frac{1}{r}+\frac{1}{p}$ and
\begin{equation}\label{Eq_Linfty03}
H_{i}(z,t)=z_{i}e^{-\frac{|z|^{2}}{4t}}.
\end{equation}
Making change of variable $z=\sqrt{4t}y,$ we obtain that
\begin{equation}\label{Eq_Linfty04}
\|H_{i}(\cdot,t)\|_{L^{r}(\R^{N})}=\left(4t\right)^{\frac{1}{2}+\frac{Nr}{2}}\|H_{i}(.,1)\|_{L^{r}(\R^{N})}=\left(4t\right)^{\frac{1}{2}+\frac{N}{2}(1+\frac{1}{q}-\frac{1}{p})}\|H_{i}(.,1)\|_{L^{r}(\R^{N})}.
\end{equation}
Combining the fact that
\begin{equation}\label{Eq_Linfty05}
\frac{t^{\frac{1}{2}+\frac{Nr}{2}}}{t(t)^{\frac{N}{2}}}=\begin{cases}t^{-\frac{1}{2} -\frac{N}{2}(\frac{1}{p}-\frac{1}{q}) }\ \text{if} \ \ q>p \\
t^{-\frac{1}{2}} \qquad \qquad \text{if} \ \ q=p
\end{cases}
\end{equation}
 with inequalities $(\ref{Eq_Linfty01}), \ (\ref{Eq_Linfty02})$  and $(\ref{Eq_Linfty04})$  we obtain inequalities \eqref{EqL_Infty_001} and \eqref{EqL_Infty_002}.
\end{proof}

Considering the analytic semigroup $\{e^{-t\Delta}\}_{t \geq 0}$ generated by $\Delta$ on bounded domains coupled with Neumann boundary condition, a result in the style of inequality \eqref{EqL_Infty_001} was first established in \cite{Dirk and Winkler} (Lemma 2.1) and inequality \eqref{EqL_Infty_002}  was later obtained in \cite{fiwy} (Lemma 3.3). The authors in \cite{Dirk and Winkler} and \cite{fiwy} used different methods to establish these results. It should be noted that the proof presented in \cite{Dirk and Winkler} is difficult to be adapted for the whole space because it is based on the measure of the domain. Taking advantage of the explicit formula  of the analytic semigroup $\{e^{-t\Delta}\}_{t \geq 0}$ generated by $\Delta$ on the whole space $\R^N$, our proof is simpler and yields the same results.

Note that $C^{\infty}_{c}(\R^N)$ is not a dense subset of $C_{unif}^{b}(\R^N)$. Hence the arguments used in the previous proof can not be applied directly on $C_{\rm unif}^{b}(\R^N).$ This problem can be overcome by choosing an adequate dense subset. This leads to a version of this result on $C_{\rm unif}^{b}(\R^N)$ that we formulate in the next lemma.

\medskip

\begin{lem}\label{L_Infty bound 2} Let $T(t)$ be the semigroup in \eqref{semigroup-eq} generated by $\Delta-I$ on $C_{\rm unif}^b(\R^N)$. For every  $t>0$, the operator $T(t) \nabla\cdot $ has a unique bounded extension on $\big(C_{\rm unif}^b(\R^N)\big)^N$  satisfying
\begin{equation}\label{EqL_Infty03}
\|T(t)\nabla \cdot u\|_{\infty}\leq\frac{N}{\sqrt{\pi}} t^{-\frac{1}{2}}e^{-t}\|u\|_{\infty} \ \ \ \forall \ u\in \big(C_{\rm unif}^{b}(\R^N)\big)^N, \ \forall \ t>0.
\end{equation}
\end{lem}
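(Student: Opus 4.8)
The plan is to reproduce the structure of the proof of Lemma~\ref{L_Infty bound}, with the one essential modification forced by the remark preceding the statement: since $C_c^\infty(\R^N)$ is not dense in $C_{\rm unif}^b(\R^N)$, I would replace it by the subspace
\[
\mathcal D=\{u=(u_1,\dots,u_N)\in (C_{\rm unif}^b(\R^N))^N\ :\ u_i\in C^1(\R^N)\ \text{and}\ \partial_{x_j}u_i\in C_{\rm unif}^b(\R^N)\ \forall\, i,j\},
\]
on which the divergence $\nabla\cdot u$ makes classical sense and integration by parts against the Gaussian kernel is licit. First I would verify that $\mathcal D$ is dense in $(C_{\rm unif}^b(\R^N))^N$ by mollification: for $u\in C_{\rm unif}^b(\R^N)$ and a standard mollifier $\rho_\ep$, the convolution $u\ast \rho_\ep$ belongs to $\mathcal D$, satisfies $\|u\ast\rho_\ep\|_\infty\le\|u\|_\infty$, and converges to $u$ uniformly — the uniform convergence being exactly where the uniform continuity of $u$ is used, via $|u\ast\rho_\ep(x)-u(x)|\le \sup_{|y|\le\ep}|u(x-y)-u(x)|$.

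Next, on $\mathcal D$ I would compute $T(t)\nabla\cdot u=\sum_{i=1}^N T(t)\partial_{x_i}u_i$ exactly as in the previous proof. Because each $u_i$ is bounded while $G(\cdot,t)$ and its first derivatives have Gaussian decay, the boundary terms at infinity vanish and integration by parts gives
\[
T(t)\partial_{x_i}u_i(x)=\frac{e^{-t}}{2t(4\pi t)^{N/2}}\int_{\R^N}H_i(z,t)\,u_i(x-z)\,dz,\qquad H_i(z,t)=z_i e^{-\frac{|z|^2}{4t}}.
\]
Bounding $|u_i(x-z)|\le\|u_i\|_\infty\le\|u\|_\infty$ reduces the estimate to the $L^1$-norm of the kernel, which factorizes: using $\int_\R|s|e^{-s^2/4t}\,ds=4t$ and $\int_\R e^{-s^2/4t}\,ds=2\sqrt{\pi t}$ one gets $\|H_i(\cdot,t)\|_{L^1(\R^N)}=4t\,(2\sqrt{\pi t})^{N-1}$. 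Inserting this into the prefactor, all powers of $2$, $\pi$ and $t$ collapse to yield the per-component bound $\|T(t)\partial_{x_i}u_i\|_\infty\le \pi^{-1/2}t^{-1/2}e^{-t}\|u\|_\infty$; summing the $N$ terms and using $\|u_i\|_\infty\le\|u\|_\infty$ produces the factor $N/\sqrt\pi$ in \eqref{EqL_Infty03}.

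Finally, having shown that the linear map $u\mapsto T(t)\nabla\cdot u$ is bounded on the dense subspace $\mathcal D$ with operator norm at most $\tfrac{N}{\sqrt\pi}t^{-1/2}e^{-t}$, I would invoke the bounded-linear-transformation extension theorem to obtain a unique bounded extension to all of $(C_{\rm unif}^b(\R^N))^N$ obeying the same bound, which is \eqref{EqL_Infty03}. I expect the only genuine obstacle to be the density step: one must choose a subspace large enough to be dense in the uniform norm yet regular enough that the integration-by-parts identity above holds with no boundary contribution, and the mollification argument is precisely what reconciles these two requirements.
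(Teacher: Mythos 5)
Your proposal is correct and follows essentially the same route as the paper's own proof: integration by parts on the dense subspace $C_{\rm unif}^{1,b}(\R^N)^N$ with the Gaussian boundary terms vanishing, the $L^1$ bound $\|H_i(\cdot,t)\|_{L^1(\R^N)}=4t\,(2\sqrt{\pi t})^{N-1}$ yielding the per-component constant $\pi^{-1/2}t^{-1/2}e^{-t}$, summation over the $N$ components, and extension by density to $(C_{\rm unif}^b(\R^N))^N$. The only difference is cosmetic: you supply the mollification argument for the density of the $C^1$-subspace (which the paper merely asserts) and compute the kernel norm directly at time $t$ rather than by rescaling to $t=1$.
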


\begin{proof}
Let
$$
C_{\rm unif}^{1,b}(\R^N)=\{u\in C^1(\R^N)\,|\, u(\cdot),\p_{x_i}u(\cdot)\in C_{\rm unif}^b(\R^N),\,\, i=1,2, \cdots,N\}.
$$
Since $C_{\rm unif}^{1,b}(\R^N)$ is dense in $C_{\rm unif}^{b}(\R^{N}),$ it is enough to prove that inequalities \eqref{EqL_Infty03} hold on $\big(C_{\rm unif}^{1,b}(\R^N)\big)^N.$
For every $u=(u_1,u_2,\cdots,u_N)\in \big(C_{\rm unif}^{1,b}(\R^N)\big)^N$ and $t>0$, we have
\begin{equation}\label{E00001}
T(t)\partial_{x_{i}} u_i  =  \frac{e^{-t}}{\left(4\pi t\right)^{\frac{N}{2}}}\int_{\R^{N}}e^{-\frac{|z|^{2}}{4t}}\partial_{x_{i}}u_i(x-z)dz=\lim_{R\rightarrow\infty}\left[\frac{e^{-t}}{\left(4\pi t\right)^{\frac{N}{2}}}\int_{B(0, R)}e^{-\frac{|z|^{2}}{4t}}\partial_{x_{i}}u_i(x-z)dz \right].
\end{equation}
Next, for every $R>0$ using integration by parts, we have
\begin{align}\label{E00002}
\int_{B(0, R)}e^{-\frac{|z|^{2}}{4t}}\partial_{x_{i}}u_i(x-z)dz
& =  \frac{1}{2t}\int_{B(0, R)}z_{i}e^{-\frac{|z|^{2}}{4t}}u_i(x-z)dz-\int_{\partial B(0, R)}e^{-\frac{|z|^{2}}{4t}}u_i(x-z)\nu_{i}(z)ds(z)\nonumber\\
& =  \frac{1}{2t}\int_{B(0, R)}z_{i}e^{-\frac{|z|^{2}}{4t}}u_i(x-z)dz-e^{-\frac{R^{2}}{4t}}\int_{\partial B(0, R)}u_i(x-z)\frac{z_{i}}{R}ds(z).
\end{align}
Since $u$ is uniformly bounded and the function $z\in \R^N\mapsto z_{i}e^{-\frac{|z|^{2}}{4t}}$ belongs to $L^{1}(\R^N),$ then
\begin{equation}\label{E00003}
\lim_{R\rightarrow\infty}\frac{1}{2t}\int_{B(0, R)}z_{i}e^{-\frac{|z|^{2}}{4t}}u_i(x-z)dz=\frac{1}{2t}\int_{\R^{N}}z_{i}e^{-\frac{|z|^{2}}{4t}}u_i(x-z)dz.
\end{equation}
On the other hand we have
\begin{equation}\label{E00004}
\left|e^{-\frac{R^{2}}{4t}}\int_{\partial B(0, R)}u_i(x-z)\frac{z_{i}}{R}ds(z) \right|\leq \|u_i\|_{\infty}e^{-\frac{R^{2}}{4t}}\int_{\partial B(0, R)}ds(z)\rightarrow 0\ \text{as}\ R \rightarrow \infty.
\end{equation}
Combining \eqref{E00001}, \eqref{E00002},\eqref{E00003} and \eqref{E00004}, we obtain that
\begin{equation*}\label{E00005}
T(t)\partial_{x_{i}} u _i =  \frac{e^{-t}}{2t\left(4\pi t\right)^{\frac{N}{2}}}\int_{\R^{N}}z_{i}e^{-\frac{|z|^{2}}{4t}}u_i(x-z)dz= \frac{e^{-t}}{2t\left(4\pi t\right)^{\frac{N}{2}}}\int_{\R^{N}}H_{i}(z,t)u_i(x-z)dz,
\end{equation*}
where the function $H_{i}$ is defined by \eqref{Eq_Linfty03}. Thus, using \eqref{Eq_Linfty04} and \eqref{Eq_Linfty05}, we obtain that
\begin{equation*}
\|T(t)\partial_{x_{i}} u_i\|_{\infty}\leq \frac{t^{-\frac{1}{2}}e^{-t}}{\pi^{\frac{N}{2}}}\|H_{i}(\cdot,1)\|_{L^{1}(\R^N)}\|u_i\|_{\infty}.
\end{equation*}
Direct computations yield that $\|\|H_{i}(\cdot,1)\|_{L^{1}(\R^N)}\|=\pi^{\frac{N-1}{2}}.$ Hence
\begin{equation}\label{E00006}
\|T(t)\partial_{x_{i}} u_i\|_{\infty}\leq \frac{t^{-\frac{1}{2}}e^{-t}}{\sqrt{\pi}}\|u_i\|_{\infty}.
\end{equation}
Inequality \eqref{EqL_Infty03} easily follows from \eqref{E00006}.
\end{proof}

 In the next Lemma, we shall provide an explicit a priori estimate of the gradient of the solution $v(\cdot,\cdot)$ in the second
 equation of \eqref{IntroEq1}. This a priori estimate will be useful in the proof of existence theorem and the discussion on the asymptotic behavior of the solution.

 \begin{lem}\label{lemma2}
 For every $u\in C^{b}_{\rm unif}(\R^{N})$, we have that
 \begin{equation}
 \|\partial_{x_{i}}(\Delta-I)^{-1}u\|_{\infty}\leq \|u\|_{\infty}
 \end{equation}
 for each $i=1,\cdots,N$. Therefore we have
 \begin{equation}\label{EqT}
 \|\nabla(\Delta-I)^{-1}u\|_{\infty}\leq\sqrt{N} \|u\|_{\infty}
 \end{equation}
 for every $u\in C_{\rm unif}^b(\R^{N})$.
 \end{lem}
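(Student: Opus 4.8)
The plan is to exploit the integral representation of the resolvent recorded in \eqref{Eq_Inv}. Since $(\Delta-I)^{-1}=-(I-\Delta)^{-1}$, for every $u\in C_{\rm unif}^b(\R^N)$ one has
$$
(\Delta-I)^{-1}u=-\int_0^\infty T(t)u\,dt,
$$
the integral converging in $C_{\rm unif}^b(\R^N)$. I would then apply $\partial_{x_i}$ to both sides and interchange it with the integral, arranging that the kernel of $T(t)$, rather than $u$ itself, carries the derivative. This is exactly the operator $T(t)\partial_{x_i}$ whose bounded extension to $C_{\rm unif}^b(\R^N)$ was constructed in the proof of Lemma \ref{L_Infty bound 2}, together with the per-component bound \eqref{E00006},
$$
\|T(t)\partial_{x_i}u\|_\infty\leq \frac{t^{-\frac12}e^{-t}}{\sqrt{\pi}}\,\|u\|_\infty,\qquad t>0.
$$

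Granting the interchange, the first inequality follows by integrating this bound:
$$
\|\partial_{x_i}(\Delta-I)^{-1}u\|_\infty\leq\int_0^\infty\|T(t)\partial_{x_i}u\|_\infty\,dt\leq\frac{\|u\|_\infty}{\sqrt{\pi}}\int_0^\infty t^{-\frac12}e^{-t}\,dt=\frac{\|u\|_\infty}{\sqrt{\pi}}\,\Gamma\!\Big(\tfrac12\Big)=\|u\|_\infty,
$$
since $\Gamma(1/2)=\sqrt{\pi}$; note that the singularity $t^{-1/2}$ near $t=0$ is integrable, which is precisely what makes the constant come out to be exactly $1$. The second inequality is then immediate: at each $x\in\R^N$,
$$
|\nabla(\Delta-I)^{-1}u(x)|=\Big(\sum_{i=1}^N|\partial_{x_i}(\Delta-I)^{-1}u(x)|^2\Big)^{\frac12}\leq\Big(\sum_{i=1}^N\|u\|_\infty^2\Big)^{\frac12}=\sqrt{N}\,\|u\|_\infty .
$$

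The only point requiring genuine care, and the step I expect to be the main obstacle, is the justification of moving $\partial_{x_i}$ inside the improper integral $\int_0^\infty T(t)u\,dt$, since $T(t)\partial_{x_i}u$ is merely $O(t^{-1/2})$ as $t\to0^+$ (and decays like $e^{-t}$ as $t\to\infty$). I would handle this by truncation together with the closedness of $\partial_{x_i}$ on $C_{\rm unif}^b(\R^N)$. For fixed $0<\varepsilon<R<\infty$, the smoothing property of the heat semigroup guarantees $T(t)u\in C^\infty(\R^N)$ for each $t>0$, with $\partial_{x_i}[T(t)u]=T(t)\partial_{x_i}u$ equal to the bounded extension of Lemma \ref{L_Infty bound 2}, depending continuously on $t\in[\varepsilon,R]$ and uniformly bounded there; hence the elementary interchange $\partial_{x_i}\int_\varepsilon^R T(t)u\,dt=\int_\varepsilon^R T(t)\partial_{x_i}u\,dt$ holds. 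Letting $\varepsilon\to0^+$ and $R\to\infty$, the left-hand integrals converge in $C_{\rm unif}^b(\R^N)$ to $(I-\Delta)^{-1}u=-(\Delta-I)^{-1}u$ by \eqref{Eq_Inv}, while the right-hand integrals converge in $C_{\rm unif}^b(\R^N)$ to $\int_0^\infty T(t)\partial_{x_i}u\,dt$ thanks to the absolute bound above. Since $\partial_{x_i}$ is a closed operator on $C_{\rm unif}^b(\R^N)$, it follows that $(\Delta-I)^{-1}u$ is differentiable in $x_i$ with $\partial_{x_i}(\Delta-I)^{-1}u=-\int_0^\infty T(t)\partial_{x_i}u\,dt$, which completes the argument.
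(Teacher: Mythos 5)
Your proposal is correct and follows essentially the same route as the paper: both rest on the representation $(I-\Delta)^{-1}u=\int_0^\infty T(t)u\,dt$ from \eqref{Eq_Inv}, move the derivative onto the Gaussian kernel (which is exactly the per-component bound \eqref{E00006} of Lemma \ref{L_Infty bound 2}), and conclude from $\frac{1}{\sqrt{\pi}}\int_0^\infty t^{-1/2}e^{-t}\,dt=\frac{\Gamma(1/2)}{\sqrt{\pi}}=1$, followed by the Euclidean-norm estimate for the full gradient. The only difference is cosmetic: the paper differentiates directly under the double integral in the explicit kernel formula, while you recycle the semigroup estimate and justify the interchange by truncation and closedness of $\partial_{x_i}$, which is a harmless (indeed slightly more careful) variant.
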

 \begin{proof} Let $u\in C_{\rm unif}^b(\R^N)$ and set  $v=(I-\Delta)^{-1}u.$ According to \eqref{Eq_Inv} it follows that
 \begin{equation*}
v(x)=\int_{0}^{\infty}\int_{\R^N}\frac{e^{-s}}{(4\pi s)^{\frac{N}{2}}}e^{-\frac{|x-z|^{2}}{4s}}u(z)dzds
\end{equation*}
for every $x\in R^N$. Hence
\begin{eqnarray*}
\partial_{x_{i}}v(x)& = &\int_{0}^{\infty}\int_{\R^N}\frac{(z_{i}-x_{i})e^{-s}}{2s(4\pi s)^{\frac{N}{2}}}e^{-\frac{|x-z|^{2}}{4s}}u(z)dzds\nonumber \\
& = & \int_{0}^{\infty}\int_{\R^N}\frac{e^{-s}}{ {\pi}^{\frac{N}{2}}\sqrt{s}}H_{i}(z,1)u(x-z)dzds.
\end{eqnarray*}
Thus, using the fact that $\Gamma(\frac{1}{2})=\sqrt{\pi}$ and $\|H(\cdot,1)\|_{L^{1}(\R^N)}=\pi^{\frac{N-1}{2}}$, we obtain that
\begin{eqnarray*}
|\partial_{x_{i}}v(x)|& \leq & \frac{1}{\pi^{\frac{N}{2}}}\underbrace{\left[\int_{0}^{\infty}s^{-\frac{1}{2}}e^{-s}ds\right]}_{\Gamma(\frac{1}{2})}\|H_{i}(\cdot,1)\|_{L^{1}(\R^N)}\|u\|_{\infty}=\|u\|_{\infty}.
\end{eqnarray*}
The lemma thus follows.
 \end{proof}

Next, we  prove Theorems \ref{Local existence1}, \ref{Local existence2},  \ref{Local Existence3}, and \ref{Lp-Bound1} in subsections 3.1, 3.2,  3.3, and 3.4,  respectively.
Throughout subsections 3.1, 3.2,  3.3, and 3.4,  $C$  denotes a constant independent of the initial functions and the solutions under consideration,  unless specified otherwise.

 \subsection{Proof of Theorem  \ref{Local existence1}}

 In this subsection, we prove Theorem \ref{Local existence1}.  The main tools for the proof of this theorem are based on the contraction mapping theorem and the existence of classical solutions for linear parabolic equations with H\"older continuous  coefficients. Throughout this subsection, $X=C_{\rm unif}^b(\R^N)$ and $X^\alpha$ is the fractional power space of $I-\Delta$ on $X$ ($\alpha\in (0,1)$).

\begin{proof}[Proof of Theorem \ref{Local existence1}]
\textbf{(i) Existence of mild solution}. We first prove the existence of a mild solution of \eqref{IVP'} with given initial function $u_0\in C_{\rm unif}^b(\R^N)$, which will be done by proving five claims.

 Fix $u_0 \in C_{\rm unif}^b(\R^N) .$ For every $T>0$ and $R>0,$ let
$$\mathcal{S}_{R,T}:=\left\{u \in C([0, T ], C_{\rm unif}^b(\R^N) )\ | \ \|u\|_{X} \leq R \right \}.$$
Note that $\mathcal{S}_{R,T}$ is a closed subset of the Banach space $C([0, T ], C_{\rm unif}^b(\R^N) )$ with the sup-norm.

\medskip

\noindent {\bf Claim 1.} For any $u\in \mathcal{S}_{R,T}$ and $t\in [0,T]$,
 $(Gu)(t)$ is well defined, where
\begin{align*}
(Gu)(t)=& T(t)u_{0} +\chi\int_{0}^{t} T(t-s)\nabla\cdot ( u(s)  \nabla (\Delta-I)^{-1}u(s))ds\\
& +(1+a)\int_{0}^{t} T(t-s)u(s)ds-b\int_{0}^{t} T(t-s)u^{2}(s)ds,
\end{align*}
and the integrals are taken in $C_{\rm unif}^b(\R^N)$.
Indeed, let $u\in\mathcal{S}_{R,T}$ and $0<t\leq T$ be fixed. Since the function
$$[0 , t]\ni s\mapsto (a+1)u(s)-bu^2(s)\in C^{b}_{\rm unif}(\R^N)$$
is continuous, then the function $F_{1} : [0, t]\to C^{b}_{\rm unif}(\R^N)$ defined by
$$ F_{1}(s):= (1+a) T(t-s)u(s)-b T(t-s)u^{2}(s) $$
 is continuous. Hence the Riemann integral $\int_{0}^{t}F_{1}(s)ds$ in $C^b_{\rm unif}(\R^N)$ exists. Observe that for every $0<\varepsilon<t$ and $s\in [0, t-\varepsilon]$, we have
 $$
F_{2,\varepsilon}(s):=T(t-s)\nabla\cdot(u(s)\nabla(\Delta-I)^{-1}u(s))=T(t-\varepsilon-s)T(\varepsilon)\nabla\cdot(u(s)\nabla(\Delta-I)^{-1}u(s)),
$$
and the function $[0, t-\varepsilon]\ni s\mapsto T(\varepsilon)\nabla\cdot(u(s)\nabla(\Delta-I)^{-1}u(s))\in C^{b}_{\rm unif}(\R^N)$ is continuous. Thus the function $F_{2,\varepsilon} : [0 , t-\varepsilon] \to C^b_{\rm uinf}(\R^N)$ is continuous for every $0<\varepsilon<t$. Thus, the function $F_{2} : [0, t)  \to C^b_{\rm uinf}(\R^N)$ defined by
$$
F_{2}(s):= T(t-s)\nabla\cdot(u(s)\nabla(\Delta-I)^{-1}u(s))
$$
is continuous. Moreover,  using Lemma \ref{L_Infty bound 2} and inequality \eqref{EqT}, we have that
\begin{align*}
\int_0^t\|F_{2}(s)\|_{\infty}ds & \leq \chi\int_0^t\|T(t-s)\nabla\cdot\|\|u(s)\|_{\infty}\|\nabla(\Delta-I)^{-1}u(s)\|_{\infty}ds \nonumber\\
&\leq \frac{N\sqrt{N}}{\sqrt{\pi}}\chi\int_0^t(t-s)^{-\frac{1}{2}}e^{-(t-s)}\|u(s)\|^2_{\infty}ds\nonumber\\
&\leq \frac{NR^2\sqrt{N}}{\sqrt{\pi}}\chi\Gamma(\frac{1}{2}).
\end{align*}
Hence, the Riemann integral $\int_{0}^{t}F_2(s)ds$ in $C^b_{\rm unif}(\R^N)$ exists. Note that $(Gu)(t)=T(t)u_0+\int_0^tF_2(s)ds +\int_0^tF_1(s)ds$. Whence, Claim 1 follows.

\medskip

\noindent {\bf Claim 2.} For every $u\in \mathcal{S}_{R,T}$ and $0<\beta<\frac{1}{2}$, the function $(0, T]\ni t\to (Gu)(t)\in X^{\beta}$ is locally H\"older continuous, and $G$ maps $\mathcal{S}_{R,T}$ into $C([0,T],C_{\rm unif}^b(\R^N))$.

First, observe that
\begin{align}\label{aux-correct01}
(Gu)(t)=& \, \,\underbrace{T(t)u_{0}}_{I_{0}(t)}+\chi\underbrace{\int_{0}^{t}T(t-s)\nabla\cdot (u(s)\nabla (\Delta-I)^{-1}u(s))ds}_{I_{1}(t)} \nonumber\\ &\,\,\,\, +\underbrace{\int_{0}^{t}T(t-s)((a+1)u(s)-bu^{2}(s))ds}_{I_{2}(t)}.
\end{align}
For every $t>0$, it is clear that $T(t)u_{0}\in X^{\beta}$ because the semigroup $\{T(t)\}_{t}$ is  analytic. Furthermore, the divergence operator  $T(t)\nabla\cdot   $ satisfy
$$
T(t)\nabla\cdot  w=T(\frac{t}{2})(T(\frac{t}{2})\nabla\cdot w)\in \text{Dom}(\Delta)\subset X^{\beta}
$$
for all $t>0,\ w\in \big(C_{\rm unif}^b(\R^N)\big)^N$. Using Lemma \ref{L_Infty bound 2} and inequality \eqref{EqT}, we obtain that
\begin{align}
\int_{0}^{t}\|T(t-s)\nabla\cdot  (u(s)\nabla (\Delta-I)^{-1}u(s)) \|_{X^{\beta}}ds &= \int_{0}^{t}\|(\Delta-I)^{\beta}T(t-s)\nabla\cdot  (u(s)\nabla (\Delta-I)^{-1}u(s)) \|_{\infty}ds\nonumber \\
&\leq  C\int_{0}^{t}(t-s)^{-\beta-\frac{1}{2}}e^{-(t-s)}\|u(s)\nabla(\Delta-I)^{-1}u(s)\|_{\infty}ds\nonumber\\
&\leq  CR^{2}\int_{0}^{t}(t-s)^{-\beta-\frac{1}{2}}e^{-(t-s)}ds\nonumber\\
&\leq  CR^{2}\Gamma(\frac{1}{2}-\beta).
\end{align}
Since the operator $(\Delta-I)^{\beta}$ is closed, we have that
\[
I_{1}(t)=\int_{0}^{t}T(t-s)\nabla\cdot  (u(s)\nabla (\Delta-I)^{-1}u(s))ds \in X^{\beta}
\]
for every $t>0.$ Similar arguments show that $I_{2}(t)\in X^{\beta}$ for every $0<t\leq T$. Hence $u(t)\in X^{\beta}$ for every $t>0$.

Next, let $t\in (0, T)$ and $h>0$ such that $t+h\leq T$. We have
\begin{eqnarray}\label{correct02}
\|I_{0}(t+h)-I_{0}(t)\|_{\beta}
&\leq & \|(T(h)-I)T(t)u_{0}\|_{X^{\beta}}\leq  Ch^{\beta}e^{-h}\|T(t)u_{0}\|_{X^{\beta}}\nonumber\\
 &\leq & Ch^{\beta}t^{-\beta}e^{-(h+t)}\|u_{0}\|_{\infty}\leq  Ch^{\beta}t^{-\beta}\|u_{0}\|_{\infty},
\end{eqnarray}
\begin{align}\label{correct03}
\|I_{1}(t+h)-I_{1}(t)\|_{\beta}&\leq \int_{0}^{t}\|(T(h)-I)T(t-s)\nabla\cdot  (u(s)\nabla(\Delta-I)^{-1}u(s))\|_{X^{\beta}}ds \nonumber\\
& \,\,\,\,  + \int_{t}^{t+h}\|T(t+h-s)\nabla\cdot  (u(s)\nabla(\Delta-I)^{-1}u(s))\|_{X^\beta}\nonumber\\
&\leq  Ch^{\beta}\int_{0}^{t}(t-s)^{-\beta-\frac{1}{2}}e^{-(t+h-s)}\|(u(s)\nabla(\Delta-I)^{-1}u(s))\|_{\infty}ds \nonumber\\
& \,\,\,\,  + C\int_{t}^{t+h}(t+h-s)^{-\beta-\frac{1}{2}}e^{-(t+h-s)}\|u(s)\nabla(\Delta-I)^{-1}u(s))\|_{\infty}ds\nonumber\\
&\leq  CR^{2}h^{\beta}\int_{0}^{t}(t-s)^{-\beta-\frac{1}{2}}e^{-(t+h-s)}ds + CR^{2}\int_{t}^{t+h}(t+h-s)^{-\beta-\frac{1}{2}}e^{-(t+h-s)}ds\nonumber\\
&\leq  CR^{2}(h^{\beta} +  h^{\frac{1}{2}-\beta} ),
\end{align} and
 \begin{eqnarray}\label{correct04}
\|I_{2}(t+h)-I_{2}(t)\|_{\beta}&\leq &\int_{0}^{t}\|(T(h)-I)T(t-s)((a+1)u(s)-bu^{2}(s))\|_{X^{\beta}}ds \nonumber\\
& & + \int_{t}^{t+h}\|T(t+h-s)((a+1)u(s)-bu^{2}(s))\|_{X^{\beta}}ds \nonumber\\
&\leq & Ch^{\beta}\int_{0}^{t}(t-s)^{-\beta}e^{-(t+h-s)}\|(a+1)u(s)-bu^{2}(s)\|_{\infty}ds \nonumber\\
& & + C\int_{t}^{t+h}(t+h-s)^{-\beta}e^{-(t+h-s)}\|(a+1)u(s)-bu^{2}(s)\|_{\infty}ds\nonumber\\
&\leq  & CR^{2}(h^{\beta} +  h^{1-\beta} ).
\end{eqnarray}
Combining \eqref{aux-correct01},\eqref{correct02},\eqref{correct03} and \eqref{correct04}, we deduce that the function $(0, T]\ni t\to (Gu(t))\in X^{\beta}$ is locally Holder continuous.

Now it is clear that $t\to (Gu)(t)\in C_{\rm unif}^b(\R^N)$ is continuous in $t$ at $t=0$. The claim then follows.

\medskip

\noindent {\bf Claim 3.} For every $R>\|u_{0}\|_{\infty},$ there exists $T:=T(R)$ such that  $G$ maps $\mathcal{S}_{R,T}$ into itself.

First, observe that for any $u\in \mathcal{S}_{R,T}$,  we have \begin{eqnarray} \|G(u)(t)\|_{\infty }  & \leq & \|T(t)u_{0}\|_{\infty}+\chi\int_0^t\| T(t-s)\nabla\cdot ( u(s)  \nabla (\Delta-I)^{-1}u(s))\|_{\infty}ds \nonumber\\ & & +(1+a)\int_{0}^{t} \|T(t-s)u(s)\|_{\infty}ds+b\int_{0}^{t} \|T(t-s)u^{2}(s)\|_{\infty}ds \nonumber\\ & \leq & e^{-t}\|u_{0}\|_{\infty}+\chi\int_0^t\| T(t-s)\nabla\cdot ( u(s)  \nabla (\Delta-I)^{-1}u(s))\|_{ \infty}ds \nonumber\\ & & +(1+a)R\int_{0}^{t} e^{-(t-s)}ds+bR^{2}\int_{0}^{t} e^{-(t-s)}ds \nonumber\\ & = & e^{-t}\|u_{0}\|_{\infty}+ R\left((1+a)+bR\right)(1-e^{-t})\nonumber\\ & & +\chi\int_0^t\| T(t-s)\nabla\cdot ( u(s)  \nabla (\Delta-I)^{-1}u(s))\|_{ \infty}ds. \end{eqnarray}
Using Lemma \ref{L_Infty bound 2} and inequality \eqref{EqT}, the last inequality can be improved to \begin{eqnarray} \label{aux-claim3-eq1} \|G(u)(t)\|_{\infty } & \leq & e^{-t}\|u_{0}\|_{\infty}+ R\left((1+a)+bR\right)(1-e^{-t})\nonumber\\ & &+C\chi\int_{0}^{t}(t-s)^{-\frac{1}{2}}e^{-(t-s)}\|( u(s)  \nabla (\Delta-I)^{-1}u(s))\|_{ \infty}ds \nonumber\\ & \leq & e^{-t}\|u_{0}\|_{\infty}+ R\left((1+a)+bR\right)(1-e^{-t})+C\chi R^{2}\int_{0}^{t}(t-s)^{-\frac{1}{2}}e^{-(t-s)}ds \nonumber\\ & \leq & e^{-t}\|u_{0}\|_{\infty}+ R\left((1+a)+bR\right)(1-e^{-t})+2C\chi R^{2}t^{\frac{1}{2}}. \end{eqnarray}

Now, by \eqref{aux-claim3-eq1}, we can now chose $T>0$ such that
$$
\|G(u)(t)\|_{\infty }\le e^{-t}\|u_{0}\|_{\infty}+ R\left((1+a)+bR\right)(1-e^{-t})+2C\chi R^{2}t^{\frac{1}{2}}<R\,\,\forall \, t\in [0,T].
$$
This together with Claim 2  implies  Claim 3.

\medskip

\noindent \textbf{Claim 4.} $G$ is a contraction map for T small and hence  has a fixed point $u(\cdot)\in \mathcal{S}_{R,T}$. Moreover, for every
$0<\beta<\frac{1}{2}$,
$(0,T]\ni t\to u(t)\in X^{\beta}$ is locally Holder continuous.

For every $u, w\in\mathcal{S}_{R,T},$ using again Lemma \ref{L_Infty bound 2}, we have
\begin{align*}
&  \|(G(u)-G(w))(t)\|_{\infty}\nonumber\\
&\leq  \chi\int_0^t\| T(t-s)\nabla\cdot ( u(s) \nabla (\Delta-I)^{-1}u(s)- w(s) \nabla (\Delta-I)^{-1}w(s))\|_{\infty}ds \nonumber\\
& \quad  +(1+a)\int_{0}^{t} \|T(t-s)(u(s)-w(s))\|_{\infty}ds+b\int_{0}^{t} \|T(t-s)(u^{2}(s)-w^{2}(s)\|_{\infty}ds\nonumber\\
&\leq  C\chi\int_{0}^{t}(t-s)^{-\frac{1}{2}}e^{-(t-s)}\|( u(s) \nabla (\Delta-I)^{-1}u(s)- w(s) \nabla (\Delta-I)^{-1}w(s))\|_{\infty}ds \nonumber\\
& \quad +((1+a)+2Rb)\int_{0}^{t} e^{-(t-s)}\|(u(s)-w(s))\|_{\infty}ds\nonumber\\
&\leq  C\chi\int_{0}^{t}(t-s)^{-\frac{1}{2}}e^{-(t-s)}\|( u(s) -w(s))\|_{\infty}\|\nabla (\Delta-I)^{-1}u(s)\|_{\infty}ds \nonumber\\
& \quad  + C\chi\int_{0}^{t}(t-s)^{-\frac{1}{2}}e^{-(t-s)}\|w(s)\|_{\infty}\| \nabla (\Delta-I)^{-1}(w(s)-u(s))\|_{\infty}ds \nonumber\\
& \quad  +(1+a+2Rb)\|u-w\|_{\mathcal{S}_{R,T}}\int_{0}^{t} e^{-(t-s)}ds\nonumber\\
&\leq  2CR\chi\|( u(s) -w(s))\|_{\mathcal{S}_{R,T}}\int_{0}^{t}(t-s)^{-\frac{1}{2}}e^{-(t-s)}ds   +(1+a+2Rb)\|u-w\|_{\mathcal{S}_{R,T}}\int_{0}^{t} e^{-(t-s)}ds\nonumber\\
&\leq \left[4CR\chi t^{\frac{1}{2}}+ (1+a+2Rb)t \right]\|( u(s) -w(s))\|_{\mathcal{S}_{R,T}}.
\end{align*}
Hence, choose T small satisfying
$$
4CR\chi t^{\frac{1}{2}}+ (1+a+2Rb)t < 1\quad \forall\,\, t\in [0,T],
$$
we have that $G$ is a contraction map. Thus there is $T>0$ and a unique function $u\in \mathcal{S}_{R,T}$ such that
\begin{equation*}\label{correct01}
u(t)={T(t)u_{0}}+\chi{\int_{0}^{t}T(t-s)\nabla\cdot  (u(s)\nabla (\Delta-I)^{-1}u(s))ds} +{\int_{0}^{t}T(t-s)((a+1)u(s)-bu^{2}(s))ds}.
\end{equation*}
Moreover, by Claim 2,  for every $0<\beta<\frac{1}{2}$, the function $t\in(0,T]\to u(t)\in X^{\beta}$ is locally Holder continuous.
Clearly, $u(t)$ is a mild solution of \eqref{IVP'} on $[0,T)$ with $\alpha=0$ and $X^0=C_{\rm unif}^b(\R^N)$.

\medskip

\noindent {\bf Claim 5.} There is $T_{\max}\in (0,\infty]$ such that \eqref{IVP'} has a mild solution $u(\cdot)$ on $[0,T_{\max})$ with
$\alpha=0$ and $X^0=C_{\rm unif}^b(\R^N)$.
Moreover, for every $0<\beta<\frac{1}{2}$, $(0,T_{\max})\ni t\mapsto u(\cdot)\in X^\beta$ is locally H\"older continuous.  If $T_{\max}<\infty$, then $\limsup_{t\to T_{\max}}\|u(t)\|_\infty=\infty$.

\smallskip
This claim follows the regular extension arguments.

\medskip

\noindent \textbf{(ii) Regularity and non-negativity.} We next show that the mild solution $u(\cdot)$ of \eqref{IVP'} on $[0,T_{\max})$ obtained in (i) is a nonnegative classical solution of \eqref{IVP'} on $[0,T_{\max})$ and satisfies \eqref{local-1-eq1}, \eqref{local-1-eq2}.

Let $0<t_{1}<T_{\max}$ be fixed. It follows from claim 2 that for $0<\nu\ll 1$, $u_{1}:=u(t_{1})\in C^{\nu}_{\rm unif}(\R^{N})$, and the mappings
$$ t\to u(\cdot,t+t_1):=u(t+t_1)(\cdot)\in C^\nu_{\rm unif}(\R^N),\,\,\, t\mapsto v(\cdot,t+t_1)\in C^\nu_{\rm unif}(\R^N)
$$
$$ t\mapsto \frac{\partial v(\cdot,t+t_1)}{\partial x_i}\in C^\nu_{\rm unif}(\R^N),\,\,\,\,  t\mapsto \frac{\partial^2 v(\cdot,t+t_1)}{\partial x_i\partial x_j}\in C^\nu_{\rm unif}(\R^N)
$$
are locally H\"older continuous in $t\in (-t_1,T_{\max}-t_1)$, where
$v(\cdot,t+t_1):=(I-\Delta)^{-1}u(\cdot,t+t_1)$ and $i,j=1,2,\cdots,N$.
 Consider  the initial value problem
\begin{equation}
\label{aux-eq-for-thm1}
\begin{cases}
\frac{\partial}{\partial t}\tilde{u}=(\Delta -1)\tilde{u}+\tilde F(t,\tilde u), \quad x\in\R^N,\,\, t>0\\
\tilde{u}(x,0)=u_{1}(x), \quad  x\in \R^N,
\end{cases}
\end{equation}
where $\tilde F(t,\tilde u)=-\chi\nabla v(\cdot,t+t_1)\nabla \tilde{u} +(a+1-\chi v(\cdot,t+t_1)-(b-\chi)u(\cdot,t+t_1))\tilde{u}$.
Then by \cite[Theorem 11 and Theorem 16 in Chapter 1]{Friedman}, \eqref{aux-eq-for-thm1} has a unique classical solution
$\tilde u(x,t)$ on $[0,T_{\max}-t_1)$ with $\lim_{t\to 0+} \|\tilde u(\cdot,t)-u_1\|_\infty=0$.
In fact $\tilde{u}$ is given by the equation
\begin{equation*}
\tilde{u}(x,t)=\int_{\R^N}\Gamma(x,t,y,t_{1})u_{1}(y)dy
\end{equation*}
with the function $\Gamma$ satisfying the inequalities
\begin{equation*}
|\Gamma(x,t,y,\tau)|\leq C \frac{e^{-\frac{\lambda_{0}|x-y|^{2}}{4(t-\tau)}}}{(t-\tau)^{-\frac{N}{2}}}
\quad {\rm and}\quad
|\partial_{x_{i}}\Gamma(x,t,y,\tau)|\leq C \frac{e^{-\frac{\lambda_{0}|x-y|^{2}}{4(t-\tau)}}}{(t-\tau)^{-\frac{(N+1)}{2}}}
\end{equation*}
for every $0<\lambda_{0}<1$.
 By a priori interior estimates
for parabolic equations (see \cite[Theorem 5]{Friedman}), we  have
that $$\tilde u(\cdot,\cdot)\in C^1((0,T_{\max}-t_1),C_{\rm unif}^b(\R^N),$$
 and  the mappings
$$
 t\mapsto \tilde u(\cdot,t)\in C_{\rm unif}^\nu(\R^N),\quad
t\mapsto  \frac{\partial\tilde  u}{\partial x_i}(\cdot,t)\in C_{\rm unif}^\nu(\R^N),
$$
$$
 t\mapsto  \frac{\partial^2 \tilde u}{\partial x_i\partial x_j}(\cdot,t)\in C_{\rm unif}^\nu(\R^N),\quad
 t\mapsto  \frac{\partial\tilde  u}{\partial t}(\cdot,t)\in C_{\rm unif}^\nu(\R^N)
$$
are locally H\"older continuous in $t\in (0,T_{\max}-t_1)$ for  $i,j=1,2,\cdots,N$ and $0<\nu\ll 1$. Hence, by \cite[Lemma 3.3.2]{Dan Henry},
 $\tilde u(t)(\cdot)=\tilde u(\cdot,t)$ is also a mild solution of \eqref{aux-eq-for-thm1} and then satisfies the following integral equation,
\begin{eqnarray*}\label{utilde_Eq}
\tilde{u}(t) = T(t)u_{1}+\int_{0}^{t}T(t-s)(-\chi\nabla v(s+t_1)\nabla\tilde{u}(s)+ (a+1-\chi v(s+t_1) -(b-\chi)u(s+t_{1}))\tilde{u}(s) )ds
\end{eqnarray*}
for $t\in [0,T_{\max}-t_1)$.

Now, using the fact that $\nabla\tilde{u}\nabla v(\cdot+t_1)= \nabla\cdot (\tilde{u}\nabla v(\cdot+t_1))-(v(\cdot+t_1)-u(\cdot+t_{1}))\tilde{u}$, we have
\begin{align}\label{utilde_Eq}
\tilde{u}(t)
 &= T(t)u_{1}-\chi\int_{0}^{t}T(t-s)(\nabla\cdot (\tilde{u}(s) \nabla v(s+t_1))ds +\chi\int_{0}^{t}T(t-s)(v(s+t_1)-u(s+t_{1}))\tilde{u}(s)ds\nonumber \\
 & \,\,  +\int_{0}^{t}T(t-s)(a+1-\chi v -(b-\chi)u(s+t_{1}))\tilde{u}(s)ds\nonumber\\
 &= T(t)u_{1}-\chi\int_{0}^{t}T(t-s)\nabla\cdot  (\tilde{u}(s) \nabla v(s+t_1))ds +\int_{0}^{t}T(t-s)(a+1 -bu(s+t_{1}))\tilde{u}(s)ds.
\end{align}
On the other hand from equation \eqref{aux-correct01}, we have that
\begin{eqnarray}\label{uEq}
u(t+t_{1})& = &T(t)u_{1}-\chi\int_{0}^{t}T(t-s)\nabla\cdot  ( u(s+t_{1})\nabla v(s+t_1))ds \nonumber\\
& & +\int_{0}^{t}T(t-s)(a+1 -bu(s+t_{1}))u(s+t_{1})ds.
\end{eqnarray}
Taking the difference side by side of \eqref{utilde_Eq}
and \eqref{uEq} and using Lemma \ref{L_Infty bound 2}, we obtain for any $\epsilon>0$ and $0<t<T_\epsilon<T_{\max}-t_1-\epsilon$ that
\begin{eqnarray*}\label{correct05}
\|\tilde{u}(t)-u(t+t_{1})\|_{\infty} &\leq & \chi\int_{0}^{t}\|T(t-s)\nabla\cdot  ( (u(s+t_{1})-\tilde{u}(s))\nabla v(s+t_1))\|_{\infty}ds \nonumber\\
& & +\int_{0}^{t}\|T(t-s)(a+1 -bu(s+t_{1}))(u(s+t_{1})-\tilde{u}(s))\|_{\infty}ds\nonumber\\
&\leq & C\chi\int_{0}^{t}(t-s)^{-\frac{1}{2}}e^{-(t-s)}\| (u(s+t_{1})-\tilde{u}(s))\nabla v(s+t_1))\|_{\infty}ds \nonumber\\
& & +\int_{0}^{t}e^{-(t-s)}\|(a+1 -bu(s+t_{1}))(u(s+t_{1})-\tilde{u}(s))\|_{\infty}ds\nonumber\\
&\leq & C\chi \sup_{s\in[0,T_\epsilon]}\|\nabla v(s+t_1)\|_\infty\int_{0}^{t}(t-s)^{-\frac{1}{2}}e^{-(t-s)}\| u(s+t_{1})-\tilde{u}(s)\|_{\infty}ds \nonumber\\
& & +C (a+1+b\sup_{s\in[0,T_\epsilon]}\|u(s+t_1)\|_\infty)\int_{0}^{t}e^{-(t-s)}\|u(s+t_{1})-\tilde{u}(s)\|_{\infty}ds.
\end{eqnarray*}
Combining this last inequality with Lemma \ref{Lem2}, we conclude that
$$\tilde{u}(t)=u(t+t_{1}) $$
for every $t\in [0,T_\epsilon]$. We then have that $u$ is a classical solution of \eqref{IVP'}  on $[0,T_{\max})$ and  satisfies \eqref{local-1-eq1} and \eqref{local-1-eq2}.
Since $u_{0}\geq 0$, by comparison principle for parabolic equations, we get $u(x,t)\geq  0$.

Let $u(\cdot,t;u_0)=u(t)(\cdot)$ and $v(\cdot,t;u_0)=(I-\Delta)^{-1} u(\cdot,t;u_0)$. We have that
$(u(\cdot,\cdot;u_0),v(\cdot,\cdot;u_0))$ is a nonnegative classical solution of \eqref{IntroEq1} on $[0,T_{\max})$ with initial
function $u_0$ and $u(\cdot,t;u_0)$ satisfies \eqref{local-1-eq1} and \eqref{local-1-eq2}.

\medskip

\noindent {\bf (iii)  Uniqueness.} We now prove that for given $u_0\in C_{\rm unif}^b(\R^N)$, \eqref{IntroEq1} has a unique classical solution
$(u(\cdot,\cdot;u_0),v(\cdot,\cdot;u_0))$ satisfying \eqref{local-1-eq1} and \eqref{local-1-eq2}.

Any classical solution of \eqref{IntroEq1} satisfying the properties of Theorem \ref{Local existence1} clearly satisfies the integral equation \eqref{uEq}. Suppose that for given $u_0\in C_{\rm unif}^b(\R^1)$ with $u_0\ge 0$, $(u_1(t,x),v_1(t,x))$ and $(u_2(t,x),v_2(t,x))$ are two classical solutions of \eqref{IntroEq1} on $\R^N\times[0, T)$ satisfying the properties of Theorem \ref{Local existence1}. Let $0<t_1<T'<T$ be fixed. Thus $\sup_{0\leq t\leq T'}(\|u_{1}(\cdot,t)\|_{\infty}+\|u_{2}(\cdot,t)\|_{\infty})<\infty$.  Let
$u_i(t)=u_i(\cdot,t)$ and $v_{i}(t)=(I-\Delta)^{-1}u_{i}(t)$  for every $i=1,2$ and $0\leq t<T$.  For every $t\in [t_1,T']$, and $i=1, 2$ we have that
$$
u_{i}(t)=T(t-t_1)u_{i}(t_1)+\chi\int_{t_1}^{t}T(t-s)\nabla\cdot  (u_{i}(s)\nabla v_{i}(s))ds +\int_{t_1}^{t}T(t-s)(a+1 -bu_{i}(s))u_{i}(s)ds.
$$
Hence for $t_1\le t\le T^{'}$,
\begin{align*}
\|u_{1}(t)-u_{2}(t)\|_{\infty}& \leq  \|(u_1(t_1)-u_2(t_1))\|_\infty+C\chi \int_{t_1}^{t}(t-s)^{-\frac{1}{2}}e^{-(t-s)}\|u_{1}(s)\nabla v_{1}(s)-u_{2}(s)\nabla v_{2}(s)\|_{\infty}ds \nonumber\\
& \, + \int_{0}^{t}e^{-(t-s)}\|u_{1}(s)-u_{2}(s)\|_{\infty}(a+1+b(\|u_{1}(s)\|_{\infty}+\|u_{2}(s)\|_{\infty}) )ds \nonumber\\
& \leq \|(u_1(t_1)-u_2(t_1))\|_\infty+ C\chi \int_{t_1}^{t}(t-s)^{-\frac{1}{2}}e^{-(t-s)}\| u_{1}(s)-u_{2}(s)\|_{\infty}\|\nabla v_{1}(s)\|_{\infty}\nonumber\\
& \, +C\chi\int_{t_1}^{t}(t-s)^{-\frac{1}{2}}e^{-(t-s)}\|u_{2}(s)\|\|\nabla (v_{2}(s)-v_{1}(s)\|_{\infty}ds \nonumber\\
& \, +(a+1+b\sup_{0\leq \tau\leq T'}(\|u_{1}(\tau)\|_{\infty}+\|u_{2}(\tau)\|_{\infty}) ) \int_{0}^{t}e^{-(t-s)}\|u_{1}(s)-u_{2}(s)\|_{\infty}ds \nonumber\\
& \leq  \|(u_1(t_1)-u_2(t_1))\|_\infty+ C\sqrt{N}\chi \int_{t_1}^{t}(t-s)^{-\frac{1}{2}}e^{-(t-s)}\| u_{1}(s)-u_{2}(s)\|_{\infty}\| u_{1}(s)\|_{\infty}\nonumber\\
& \,  +C\sqrt{N}\chi\int_{t_1}^{t}(t-s)^{-\frac{1}{2}}e^{-(t-s)}\|u_{2}(s)\|\| u_{2}(s)-u_{1}(s)\|_{\infty}ds \nonumber\\
& \, +(a+1+b\sup_{0\leq \tau\leq T'}(\|u_{1}(\tau)\|_{\infty}+\|u_{2}(\tau)\|_{\infty}) ) \int_{t_1}^{t}e^{-(t-s)}\|u_{1}(s)-u_{2}(s)\|_{\infty}ds \nonumber\\
& \leq\|(u_1(t_1)-u_2(t_1))\|_\infty+  M\int_{t_1}^{t}(t-s)^{-\frac{1}{2}}e^{-(t-s)}\|u_{2}(s)\|\| u_{2}(s)-u_{1}(s)\|_{\infty}ds,
\end{align*}
where  $M= a+1+ (C\sqrt{N}\chi + b\sqrt{T'})\sup_{0\leq t\leq T'}(\|u_{1}(\tau)\|_{\infty}+\|u_{2}(\tau)\|_{\infty})<\infty.$
Let $t_1\to 0$, we have
\vspace{-0.1in}$$
\|u_{1}(t)-u_{2}(t)\|_{\infty}\le   M\int_{0}^{t}(t-s)^{-\frac{1}{2}}e^{-(t-s)}\|u_{2}(s)\|\| u_{2}(s)-u_{1}(s)\|_{\infty}ds.
$$
By Lemma \ref{Lem2} again, we get
$u_1(t)\equiv u_2(t)$ for all $0\leq t\leq T'$. Since $T'<T$ was arbitrary chosen, then $u_1(t)\equiv u_2(t)$ for all $0\leq t< T$. The theorem is thus proved.
\end{proof}

\subsection{Proof of Theorem \ref{Local existence2}}

In this subsection, we prove Theorem \ref{Local existence2}. Throughout this section,  we let
 $\alpha\in (\frac{1}{2}, 1), \delta\in[0,2\alpha-1)$ and $p>N$ such that $\frac{(2\alpha-1-\delta)p}{N}>1.$ Let $X=L^{p}(\R^{N})$ and
 $X^\alpha$ be the fractional power space of $\Delta-I$ on $X$. By the inequalities in $(\ref{Fractional power Imbedding})$,  we have the continuous inclusions
$$
X^{\alpha}\subset C^{1+\delta}(\mathbb{R}^{N})\quad \text{and}\quad X^{\alpha}\subset W^{1,lp}\ \ \ \forall\ l\geq 1.
$$

\begin{proof}[Proof of Theorem \ref{Local existence2}]
We prove this theorem using semigroup method.

First, consider the functions $B: X^{\alpha}\times X^{\alpha}\rightarrow L^{p}(\mathbb{R}^{N})$ and  $F : X^{\alpha}\rightarrow L^{p}(\mathbb{R}^{N})$ defined by
\begin{equation*}
B(u,v)= -\chi \nabla u\nabla(\Delta-I)^{-1}v -\chi u(\Delta-I)^{-1}v -(b-\chi)uv
\vspace{-0.1in}\end{equation*}
and
\vspace{-0.1in}\begin{align*}\label{Func}
F(u)= \chi \nabla u\nabla(\Delta-I)^{-1}u -\chi u(\Delta-I)^{-1}u -(b-\chi)u^{2}+(a+1)u
\end{align*} for every $u,v\in X^{\alpha}.$ Clearly, $B$ is a  bilinear function and $F(u)= B(u,u)+ (1+a)u$ for every $u\in X^{\alpha}.$  Since $X^{\alpha}$ is continuously embedded in $W^{1,p}(\R^{N})\cap C^{1+\delta}(\R^{N})$,  there is a constant ${C}>0$ such that
$$
\|w\|_{W^{1,p}(\R^{N})}+\|w\|_{C^{1+\delta}}\leq {C}\|w\|_{X^{\alpha}} \ \ \forall \ w\in X^{\alpha}.
$$
Combining this with  regularity and a priori estimates for elliptic equations, we obtain that
\begin{eqnarray*}
\| B(u,v) \|_{L^{p}}& \leq & \chi \|u\|_{C^{1}}\|(\Delta-I)^{-1}v\|_{W^{1,p}}+\chi \|u\|_{\infty}\|(\Delta-I)^{-1}v\|_{L^{p}} +(b-\chi)\|u\|_{\infty}\|v\|_{L^{p}}\nonumber\\
& \leq & 2\chi\|u\|_{C^{1}}\|(\Delta-I)^{-1}v\|_{W^{1,p}}+(b-\chi)\|u\|_{\infty}\|v\|_{L^{p}}\nonumber\\
& \leq & {C}\|u\|_{X^{\alpha}}\left(\|(\Delta-I)^{-1}v\|_{W^{1,p}}+\|v\|_{X^{\alpha}}\right)\nonumber\\
& \leq & {C}\|u\|_{X^{\alpha}}\|v\|_{X^{\alpha}}.
\end{eqnarray*}
Thus $B$ is continuous. Hence the function $F$ is locally Lipschitz continuous and maps bounded sets to bounded sets. It follows from Theorem \ref{Existence of Mild Soltion} and  Theorem \ref{Existence of Mild Soltion and maximal time} that there exists a maximal time $T_{\max}>0$ and a unique $u\in  C([0,  T_{\max}), X^{\alpha})$ satisfying the integral equation
$$
u(t)=T(t)u_{0} +\int_{0}^{t}T(t-s)(-\chi\nabla\cdot( u(s)\nabla v(s))+(a+1) u(s) -b u^{2}(s))ds,
$$
where $v(s)=(I-\Delta)^{-1}u(s)$. Moreover, $u\in C^1((0,T_{\max}),L^p(\R^N))$ and if $T_{\max}<\infty$,
then $\lim_{t\to T_{\max}}\|u(t)\|_{X^\alpha}=\infty$.

Next, by \cite[Theorem 3.5.2]{Dan Henry},
\begin{equation}
\label{local-2-eq2-support}
u\in C((0,T_{\max}),X^\beta)
\end{equation}
for any $0\le \beta< 1$.  Note that $X^{\alpha}$ is continuously embedded in $C^{1+\delta}(\R^N)$. Then by Theorem \ref{Local existence1} and \eqref{local-2-eq2-support}, we have that
 \eqref{local-2-eq2} and \eqref{local-2-eq3} hold.

Now, let $u(\cdot,t;u_0)=u(t)(\cdot)$ and $v(\cdot,t;u_0)=(I-\Delta)^{-1} u(\cdot,t;u_0)$. We have that
$(u(\cdot,\cdot;u_0)$, $v(\cdot,\cdot;u_0))$ is a nonnegative classical solution of \eqref{IntroEq1} on $[0,T_{\max})$ with initial
function $u_0$ and $u(\cdot,t;u_0)$ satisfies \eqref{local-2-eq1}, \eqref{local-2-eq2}, and \eqref{local-2-eq3}. The uniqueness of classical solutions
 of \eqref{IntroEq1} follows from the similar arguments as in the proof (iii) of
Theorem \ref{Local existence1}.
\end{proof}

\subsection{Proof of Theorem \ref{Local Existence3}}

In this subsection, we prove Theorem \ref{Local Existence3}.

\begin{proof}[Proof of Theorem \ref{Local Existence3}]
 The proof of this theorem follow the similar arguments used in the  proof of Theorem \ref{Local existence1}. Hence lengthy details will be avoided.
 In the following, we fix $u_0 \in L^{p}(\R^N) .$

\medskip

\noindent {\bf Claim 1.} There is $T_{\max}\in (0,\infty]$ such that \eqref{IVP'} has a mild solution $u(\cdot)$ on $[0,T_{\max})$ with
$\alpha=0$ and $X^0=L^p(\R^N)$;
 for every $0<\beta<\frac{1}{2}$, $(0,T_{\max})\ni t\mapsto u(\cdot)\in X^\beta$ is locally H\"older continuous; and  if $T_{\max}<\infty$, then $\lim_{t\to T_{\max}}\|u(t)\|_{L^p(\R^N)}=\infty$.

\smallskip

For every $T>0$ and $R>0,$ let us set
$$\mathcal{S}_{R,T}:=\left\{u \in C([0, T ], L^{p}(\R^N) )\ | \ \|u\|_{L^{p}(\R^N)} \leq R \right \}.$$
Note that $\mathcal{S}_{R,T}$ is a closed subset of the Banach space $C([0, T ]: L^{p}(\R^N) )$ with the sup-norm.

Define $G: \mathcal{S}_{R,T} \to C([0, T ], L^{p}(\R^N))$ by
\begin{align*}
(Gu)(t)=& T(t)u_{0} +\chi\int_{0}^{t} \underbrace{T(t-s)\nabla\cdot ( u(s)  \nabla (\Delta-I)^{-1}u(s))}_{F_2(s)}ds\\
& +(1+a)\int_{0}^{t} T(t-s)u(s)ds-b\int_{0}^{t} \underbrace{T(t-s)u^{2}(s)}_{F_1(s)}ds.
\end{align*}
 It is clear that the Riemann integral $\int_0^tT(t-s)u(s)ds$ in $L^p(\R^N)$ exists for every $u\in \mathcal{S}_{R,T}$ and $t\in [0 , T]$.
Let $u\in\mathcal{S}_{R,T}$ and $0<t\leq T$ be given. For every $s_1, s_2\in[0, t]$, we have that
\begin{align*}
&\|u(s_1)\nabla(\Delta-I)^{-1}u(s_1)-u(s_2)\nabla(\Delta-I)^{-1}u(s_2)\|_{L^p(\R^N)}\nonumber\\
& \leq \|u(s_1)-u(s_2)\|_{L^p(\R^N)}\|(\Delta-I)^{-1}u(s_1)\|_{C^{1,b}_{\rm unif}(\R^N)}\nonumber\\
&+\|u(s_2)\|_{L^p(\R^N)}\|(\Delta-I)^{-1}(u(s_2)-u(s_1))\|_{C^{1,b}_{\rm unif}(\R^N)} \nonumber
\end{align*}
Since $p>N$, by  regularity and a priori estimates for elliptic equations, there is a constant $C>0$ such that
$$
\|(\Delta-I)^{-1}w\|_{C^{1,b}_{\rm unif}(\R^N)}\leq C\|w\|_{L^{p}(\R^N)}, \quad \forall\ w\in L^{p}(\R^N).
$$
Thus we have that
\begin{align*}
&\|u(s_1)\nabla(\Delta-I)^{-1}u(s_1)-u(s_2)\nabla(\Delta-I)^{-1}u(s_2)\|_{L^p(\R^N)}\nonumber\\
& \leq C(\|u(s_1)\|_{L^p(\R^N)}+\|u(s_2)\|_{L^p(\R^N)})\|u(s_1)-u(s_{2})\|_{L^p(\R^N)}\nonumber\\
& \leq 2CR\|u(s_1)-u(s_{2})\|_{L^p(\R^N)},\ \forall\ 0\leq s_1,s_2\leq t.\nonumber
\end{align*}
Hence the function $
[0,t]\ni s\mapsto u(s)\nabla(\Delta-I)^{-1}u(s)\in L^{p}(\R^N)$ is continuous. Similar arguments as  Theorem \ref{Local existence1} Claim 1 yield that the the function $F_{2} : [0, t)\to L^{p}(\R^N)$ is continuous and the Riemann integral $\int_0^tF_2(s)ds$ in $L^p(\R^N)$ exists.  Next, for every $0<\varepsilon<t$ and $s\in [0, t-\varepsilon]$, we have
\begin{align*}
F_1(s)=T(t-\varepsilon-s)T(\varepsilon)u^2(s)
\end{align*}
and by \eqref{Lp Estimates}, \eqref{EqL_Infty_001} and $p\ge 2$, and H\"older's inequality, we have
\begin{align*}
\|T(\varepsilon)u^2(s_1)-T(\varepsilon)u^2(s_2)\|_{L^p(\R^N)} & \leq C\varepsilon^{-\frac{N}{2p}}e^{-\varepsilon}\|u^{2}(s_1)-u^{2}(s_2)\|_{L^{\frac{p}{2}}(\R^N)}\nonumber\\
& \leq C\varepsilon^{-\frac{N}{2p}}e^{-\varepsilon}\|u(s_1)-u(s_2)\|_{L^{p}(\R^N)}\|u(s_1)+u(s_2)\|_{L^{p}(\R^N)}\nonumber\\
& \leq 2RC\varepsilon^{-\frac{N}{2p}}e^{-\varepsilon}\|u(s_1)-u(s_2)\|_{L^{p}(\R^N)}, \ \forall s_{1},s_{2}\in[0, t-\varepsilon].\nonumber
\end{align*}
Thus the function $F_1 :[0, t)\to L^{p}(\R^N)$ is continuous. Moreover,  by \eqref{Lp Estimates}, \eqref{EqL_Infty_001}, $p>N$ and $p\ge 2$, we have
\begin{align*}
\int_{0}^{t}\|F_{1}(s)\|_{L^p(\R^N)}ds&\leq C\int_0^t (t-s)^{-\frac{N}{2p}}e^{-(t-s)}\|u^2\|_{L^{\frac{p}{2}}(\R^N)}ds\nonumber\\
&\leq CR\int_0^t (t-s)^{-\frac{N}{2p}}e^{-(t-s)}ds\leq CR\Gamma(1-\frac{N}{2p}).\nonumber
\end{align*}
Hence the Riemann integral $\int_0^tF_{1}(s)ds$ in $L^p(\R^N)$ exists. Therefore $(Gu)(t)$ is well defined and the integral is taken in $L^p(\R^N)$.

For every $R>\|u_{0}\|_{p}$,  there exists $T:=T(R)$ such that  $G$ maps $\mathcal{S}_{R,T}$ into itself.
Indeed, for every $u \in \mathcal{S}_{R,T},$ we have
\begin{align*}
\|G(u)(t)\|_{L^{p}(\R^N)} & \leq  e^{-t}\|u_{0}\|_{L^{p}(\R^N)}+\chi\int_0^t\| T(t-s)\nabla\cdot ( u(s)  \nabla (\Delta-I)^{-1}u(s))\|_{ L^{p}(\R^N)}ds \nonumber\\
&\,\,\,   +(1+a)\int_{0}^{t} e^{-(t-s)}\|u(s)\|_{L^{p}(\R^N)}ds+b\int_{0}^{t}\|T(t-s)u^{2}(s)\|_{L^{p}(\R^N)}ds.
\end{align*}
Now, by \eqref{Lp Estimates}, \eqref{EqL_Infty_001} and $p\ge 2$, the last inequality can be improved to
\begin{align}\label{P00}
\|G(u)(t)\|_{L^{p}(\R^N)}
& \leq  e^{-t}\|u_{0}\|_{L^{p}(\R^N)}+ {C}\int_{0}^{t}(t-s)^{-\frac{1}{2}}e^{-(t-s)}\|u(s)\nabla(\Delta-I)^{-1}u(s)\|_{L^{p}(\R^N)}ds \nonumber\\
 & \,\,  +(a+1)\int_{0}^{t}e^{-(t-s)}\|u(s)\|_{L^{p}(\R^N)} + {C}\int_{0}^{t}(t-s)^{-\frac{N}{2p}}e^{-(t-s)}\|u^{2}(s)\|_{L^{\frac{p}{2}}}ds \nonumber\\
 & \leq  e^{-t}\|u_{0}\|_{L^{p}(\R^N)}+ {C}\int_{0}^{t}(t-s)^{-\frac{1}{2}}\|u(s)\|_{L^{p}(\R^N)}\|(\Delta-I)^{-1}u(s)\|_{C_{unif}^{1,b}(\R^N)}ds \nonumber\\
 & \,\, +(a+1)Rt+ {C}R^{2}\int_{0}^{t}(t-s)^{-\frac{N}{2p}}ds.
\end{align}
Since $p>N$, by  regularity and a priori estimates for elliptic equations, there is a constant $C>0$ such that
$$
\|(\Delta-I)^{-1}w\|_{C^{1,b}_{\rm unif}(\R^N)}\leq C\|w\|_{L^{p}(\R^N)}, \quad \forall\ w\in L^{p}(\R^N).
$$
Combining this with \eqref{P00}, we obtain that
\begin{eqnarray*}\label{P01}
\|G(u)(t)\|_{L^{p}(\R^N)} & \leq & e^{-t}\|u_{0}\|_{L^{p}(\R^N)}+ {C}R^{2}t^{\frac{1}{2}} +(a+1)Rt+ {C}R^{2}t^{1-\frac{N}{2p}}.
\end{eqnarray*}
Hence we can now chose $T>0$ such that
$$
e^{-t}\|u_{0}\|_{L^{p}(\R^N)}+ {C}R^{2}t^{\frac{1}{2}} +(a+1)Rt+ {C}R^{2}t^{1-\frac{N}{2p}}<R\quad \forall\,\, t\in [0,T].
$$
This implies that $G$ maps $\mathcal{S}_{R,T}$ into itself.

Using again inequalities \eqref{Lp Estimates} and inequality \eqref{EqL_Infty_001}, by following the same ideas as in the proof of claim 3 in Theorem \ref{Local existence1}, we have that $G$ is a contraction map for $T$ sufficiently small.
Thus G has a unique fixed point, say $u(\cdot)$. Using again Lemma \ref{L_Infty bound}, similar arguments used in the proof of Claim 2 of  Theorem \ref{Local existence1} yields that the function $ (0, T)\ni t\to u(t)\in X^{\beta}$ is locally H\"older continuous for every $0<\beta<\frac{1}{2}$. Clearly, $u(\cdot)$ is a mild solution of
\eqref{IVP'} with $\alpha=0$ and $X^0=L^p(\R^N)$.
 The claim then follows from regular extension arguments.

 \medskip

 \noindent {\bf Claim 2.} $u(t)$ obtained in Claim 1 is the unique classical solution of \eqref{IVP'} on $[0,T_{\max})$ satisfying
 \eqref{local-3-eq1}, \eqref{local-3-eq2}, and \eqref{local-3-eq3}.

 By Claim 1, for any $0<\beta<1/2$ and $t_1\in (0,T_{\max})$, $u(t_1)\in X^\beta$. It then follows that $u_1(\cdot)=u(t_1)(\cdot)\in C^\nu_{\rm unif}(\R^N)\cap L^p(\R^N)$ for
 $0<\nu\ll 1$.
 Consider  \eqref{aux-eq-for-thm1}. By the similar arguments as those in the proof (ii) of Theorem \ref{Local existence1},  $u(\cdot)$ is a classical solution of \eqref{IVP'} on $[0,T_{\max})$. Moreover,
 $u(\cdot)\in C^1((0,T_{\max}),C_{\rm unif}^b(\R^N))$ and satisfies \eqref{local-3-eq3}.
By Theorem \ref{more-on-existence-of-mild-solution}, we have $u(\cdot)\in C((0,T_{\max}),X^\beta)$ for any $0\le \beta<1$. Hence $u$ satisfies \eqref{local-3-eq2}.
By the similar arguments as those in the proof (iii) of Theorem \ref{Local existence1}, we can prove the uniqueness and then the claim follows.
\medskip

\noindent {\bf Claim 3.} The function $u(t)$ obtained in Claim 1 is  nonnegative.

We have that the function $u$ satisfies the integral equation
$$u(t)=T(t)u_{0}+\chi\int_{0}^{t}T(t-s)\nabla\cdot (u\nabla(\Delta-I)^{-1}u(s))ds + \int_{0}^{t}T(t-s)((a+1)u(s)-bu^{2}(s))ds.
$$
Since $u_{0}\geq 0$, there is a sequence of nonnegative functions $\{u_{0n}\}_{n}\in L^{p}(\R^N)\cap C^{b}_{\rm unif}(\R^N)$ such $\|u_{0n}-u_{0}\|_{L^{p}(\R^N)}\rightarrow 0$ as $n\rightarrow \infty$. For $R$ large enough, since $\sup_{n}\|u_{0n}\|_{L^{p}(\R^N)}<\infty$, the time T can be chosen to be independent of n, such for each n, there is a unique $u_{n}(\cdot)\in \mathcal{S}_{R,T}$ satisfying the integral equation
\begin{equation*}
u_{n}(t)=T(t)u_{0n}+\chi\int_{0}^{t}T(t-s)\nabla\cdot (u_{n}\nabla(\Delta-I)^{-1}u_{n}(s))ds + \int_{0}^{t}T(t-s)((a+1)u_{n}(s)-bu^{2}_{n}(s))ds
\end{equation*}
for every $n$. Since the $u_{n}\geq 0$ and belongs to $C^{b}_{\rm unif}(\R^N)$, for every $n$,  Theorem \ref{Local existence1} implies that $u_{n}(t)\geq 0$. Now, similar arguments used to establish \eqref{correct05}, yield  as similar result
\begin{align*}
\|u_{n}(t)-u(t)\|_{L^{p}(\R^N)} & \leq  \|T(t)(u_{0n}-u_{0})\|_{L^{p}(\R^N)}+C\int_{0}^{t}(t-s)^{-\frac{N}{2p}}\|u_{n}(s)-u(s)\|_{L^{p}(\R^N)}ds\nonumber\\
& \leq  \|u_{0n}-u_{0}\|_{L^{p}(\R^N)}+C\int_{0}^{t}(t-s)^{-\frac{N}{2p}}\|u_{n}(s)-u(s)\|_{L^{p}(\R^N)}ds.
\end{align*}
Next, using Lemma \ref{Lem2}, it follows from the last inequality that
\begin{equation*}
\|u_{n}(t)-u(t)\|_{L^{p}(\R^N)}\leq C\|u_{0n}-u_{0}\|_{L^{p}(\R^N)}
\end{equation*}
for every $n\geq 1,\ 0< t< T$, where  $C>0$ is positive constant independent of $n$.
Letting n goes to infinity, we obtain that
\[
\|u_{n}(t)-u(t)\|_{L^{p}(\R^N)}\rightarrow 0 \quad \quad \text{as}\quad n\rightarrow 0.
\]
Thus, for every $t>0$,  $u(t)(x)\geq 0$ for a. e. $x\in\R^N$. Since $u(t)(x)$ is continuous in $x\in\R^N$ for each $t>0$, we conclude that $u(t)(x)\geq 0$ for every $x\in\R^N$, $ t\in (0,  T]$.

Let $u(\cdot,t;u_0)=u(t)(\cdot)$ and $v(\cdot,t;u_0)=(I-\Delta)^{-1} u(\cdot,t;u_0)$. We then have that
$(u(\cdot,\cdot;u_0)$, $v(\cdot,\cdot;u_0))$ is a unique nonnegative classical solution of \eqref{IntroEq1} on $[0,T_{\max})$ with initial
function $u_0$ and $u(\cdot,t;u_0)$ satisfies \eqref{local-3-eq1}, \eqref{local-3-eq2}, and \eqref{local-3-eq3}.
\end{proof}

\subsection{Proof of Theorem \ref{Lp-Bound1}}

In this section, we prove Theorem \ref{Lp-Bound1}.

\begin{proof}[Proof of Theorem \ref{Lp-Bound1}]
(1) Let $u(t)=u(\cdot,t;u_0)$. It is clear that $T_{\max}^\alpha(u_0)\le T_{\max}^\infty(u_0)$. Assume that $T_{max}^\alpha(u_0)<T_{\max}^\infty(u_0)$.
 Then $T_{\max}^\alpha(u_0)<\infty$. Recall that  for every $t\in (0, T_{max}^\alpha(u_0)),$
\begin{align}\label{new-Eq6_0}
u(t)=&T(t)u_{0}-\chi\underbrace{\int_{0}^{t}T(t-s)(\nabla u\nabla v)ds}_{I_{1}}-\chi\underbrace{\int_{0}^{t}T(t-s)(uv)ds}_{I_{2}}\nonumber\\
 &\quad + \underbrace{\int_{0}^{t}T(t-s)((a+1)u+(\chi-b)u^{2})}_{I_{3}}ds.
\end{align}
We investigate the $X^{\alpha}$-norm of each term in the right hand side of $(\ref{new-Eq6_0})$ for $0\le t< T_{\max}^\alpha(u_0)$.  It follows from inequalities $(\ref{Lp Estimates})$ that
\begin{equation}\label{new-Eq6_1}
\|T(t)u_{0}\|_{X^{\alpha}}\leq C_{\alpha} e^{-t}t^{-\alpha}\|u_{0}\|_{L^{p}}.
\end{equation}
  Using inequalities $(\ref{Lp Estimates})$, we have that 
\begin{align*}
\|I_{1}\|_{X^{\alpha}}& \leq  \int_{0}^{t}\|(I-\Delta)^{\alpha}T(t-s)(\nabla u\nabla v)\|_{L^{p}}ds \leq  C_{\alpha}\int_{0}^{t}e^{-(t-s)}(t-s)^{-\alpha}\|\nabla u\nabla v\|_{L^{p}}ds \nonumber \\
& \leq  C_{\alpha}\int_{0}^{t}e^{-(t-s)}(t-s)^{-\alpha}\|u\|_{C^{1}(\R^{N})}\|\nabla v\|_{L^{p}}ds.
\end{align*}
Since $v=(\Delta-I)^{-1}u,$ elliptic regularity implies that
\begin{equation*}\label{new-Eq7_2}
\|\nabla v\|_{L^p(\R^N)}\leq  C \|u\|_{X^\alpha}.
\end{equation*}
We then have
\begin{eqnarray*}\label{new-Eq8}
\|I_{1}\|_{X^{\alpha}}
& \leq & C \sup_{0\le \tau\le T_{\max}^\alpha(u_0)} \|u(\tau)\|_{C^1} \int_{0}^{t}e^{-(t-s)}(t-s)^{-\alpha}\| u(s)\|_{X^{\alpha}}ds.
\end{eqnarray*}
Similar arguments applied to $I_{2}$ and $I_{3}$ yield that
\begin{eqnarray*}\label{new-Eq9}
\|I_{2}\|_{X^{\alpha}}
& \leq &   C \sup_{0\le \tau\le T_{\max}^\alpha(u_0)} \|u(\tau)\|_\infty  \int_{0}^{t}e^{-(t-s)}(t-s)^{-\alpha}\| u(s)\|_{X^{\alpha}}ds
\end{eqnarray*}
and
\begin{eqnarray*}\label{new-Eq10}
\|I_{3}\|_{X^{\alpha}}
& \leq &  C (a+1+ \sup_{0\le \tau\le T_{\max}^\alpha(u_0)} \|u(\tau)\|_\infty)\int_{0}^{t}e^{-(t-s)}(t-s)^{-\alpha}\| u(s)\|_{X^{\alpha}}ds.
\end{eqnarray*}
We then have  that for every $t\in (0, T_{max}^\alpha(u_0))$
\begin{equation*}\label{Eq10_1}
\|u(t)\|_{X^{\alpha}}\leq  M(u_0)\left[t^{-\alpha}+\int_{0}^{t}(t-s)^{-\alpha}\|u(s)\|_{X^{\alpha}}\right],
\end{equation*}
where
\begin{equation*}
M(u_0)=
{C}(\|u_0\|_{L^p(\R^N)}+a+1+\sup_{0\le \tau \le T_{\max}^\alpha(u_0)}\|u(\tau)\|_{C^1}+\sup_{0\le \tau\le T_{\max}^\alpha(u_0)} \|u(\tau)\|_\infty).
\end{equation*}
Thus, it follows from Lemma $\ref{Lem2}$ that
\begin{equation*}\label{Eq10_2}
\|u(t)\|_{X^{\alpha}}\leq {C} M(u_0) t^{-\alpha}  \ \ \forall\ t\in(0, T_{\max}^\alpha(u_0)).
\end{equation*}
This implies that $\limsup_{t\to T_{\max}^\alpha(u_0)}\|u(t)\|_{X^\alpha}<\infty$, a contradiction. Therefore,
$T_{\max}^\alpha(u_0)=T_{\max}^\infty(u_0)$.

\smallskip

\smallskip
(2) It is clear that $T_{\max}^\alpha(u_0)\le T_{\max}^p(u_0)$. Assume that $T_{\max}^\alpha(u_0)<T_{\max}^p(u_0)$. Then $T_{\max}^\alpha(u_0)<\infty$.
By \eqref{local-3-eq2}, $\limsup_{t\to T_{\max}^\alpha(u_0)}\|u(\cdot,t;u_0)\|_{X^{\alpha}}<\infty$, a contradiction. (2) then follows.

\smallskip
\smallskip

(3) By the arguments in (1), we have $T_{\max}^p(u_0)\ge T_{\max}^\infty(u_0)$. By \eqref{local-3-eq2},
$T_{\max}^\infty(u_0)\ge T_{\max}^p(u_0)$. (3) then follows.

\smallskip\smallskip

(4) Let $u(\cdot,t;u_0)$ be as in Theorem \ref{Local Existence3}. For given $T>0$ and $R>\|u_0\|_{L^1(\R^N)}+\|u_0\|_{L^p(\R^N)}$,  consider the set   $$
\mathcal{S}'_{R,T}:=\{ u\in C([0,T],L^{1}(\R^N)\cap L^{p}(\R^N))\,|\,  \|u\|_{L^{1}(\R^N)\cap L^{p}(\R^N)}\leq R\},
$$
where $\|u\|_{L^{1}(\R^N)\cap L^{p}(\R^N)}=\|u\|_{L^1(\R^N)}+\|u\|_{L^p(\R^N)}.$
Using inequality \eqref{Lp Estimates}, for $u\in \mathcal{S}_{R,T}^{'}$, we have that
\begin{align*}
\left\|\int_{0}^{t}T(t-s)u^{2}ds \right\|_{L^{1}(\R^N)} & \leq  \int_{0}^{t}\|T(t-s)u^{2}(s)\|_{L^{1}(\R^N)}ds\leq  C\int_{0}^{t}e^{-(t-s)}\|u^2(s)\|_{L^{1}}ds\\
  &= C\int_{0}^{t}e^{-(t-s)}\|u(s)\|^{2}_{L^{2}}ds.
\end{align*}
Now, since $1<2\leq p$, Holder's inequality implies that $\|u(s)\|_{L^2}\leq \|u(s)\|_{L^1}^{\lambda}\|u(s)\|_{L^p}^{1-\lambda}\leq R$ with $\lambda=(\frac{1}{2}-\frac{1}{p})/(1-\frac{1}{p})$. Thus, the last inequality becomes
\begin{equation*}
\left\|\int_{0}^{t}T(t-s)u^{2}ds \right\|_{L^{1}(\R^N)}  \leq  CR^{2}\int_{0}^{t}e^{-(t-s)}ds \leq  CR^{2}t.
\end{equation*}
This together with the arguments in Claim 1 of Theorem \ref{Local Existence3}  implies that
$$G: \mathcal{S}^{'}_{R,T} \to C([0, T ], L^{p}(\R^N))\cap C([0,T], L^1(\R^N))$$
 is well defined, where
\begin{align*}
(Gu)(t)=& T(t)u_{0} +\chi\int_{0}^{t} T(t-s)\nabla\cdot ( u(s)  \nabla (\Delta-I)^{-1}u(s))ds\\
& +(1+a)\int_{0}^{t} T(t-s)u(s)ds-b\int_{0}^{t} T(t-s)u^{2}(s)ds.
\end{align*}
By the arguments in Claim 1 of Theorem \ref{Local Existence3}, $u(\cdot,t;u_0)\in C([0,T],L^1(\R^N))$ for $0<T\ll 1$.

Let
$$
T_{\max}^{p,1}(u_0)=\sup\{\tau \in [0,T_{\max}^p(u_0))\,|\, \sup_{0\le t<\tau}\|u(\cdot,t;u_0)\|_{L^1(\R^N)}<\infty\}.
$$
Assume that $T_{\max}^{p,1}(u_0)<T_{\max}^p(u_0)$. Then
 $\sup_{0\le t<T_{max}^{p,1}(u_0)}\|u(\cdot,t;u_0)\|_{L^1(\R^N)}=\infty$.
 Fix any $t_1\in (0,T_{\max}^{p,1}(u_0))$. By Theorem \ref{Local Existence3}, $u_1=u(\cdot,t_1;u_0)\in L^p(\R^N)\cap C_{\rm unif}^b(\R^N)$ and then
$$u(\cdot,\cdot;u_0),\partial_{x_i}u(\cdot,\cdot;u_0)\in C([t_1,T_{\max}^{p,1}(u_0)],C_{\rm unif}^b(\R^N),\quad i=1,2,\cdots,N.$$
Using the arguments in (1) with $u_0$ being replaced by $u_1$ and  $p=1$, $\alpha=0$,   we have
$$
\limsup_{t\to T_{\max}^{p,1}-t_1} (u_0)\|u(\cdot,t;u_1)\|_{L^1(\R^N)}<\infty.
$$
Note that $u(\cdot,t+t_1;u_0)=u(\cdot,t;u_1)$. We then have
$$
\limsup_{t\to T_{\max}^{p,1}(u_0)}\|u(\cdot,t;u_0)\|_{L^1(\R^N)}<\infty,
$$
which is a contradiction. Therefore, $T_{\max}^{p,1}(u_0)=T_{\max}^p(u_0)$.
\end{proof}

\section{Global existence of classical solutions}
In this section, we discuss  the existence of  global in time solutions to \eqref{IntroEq1} and prove Theorems \ref{Main Theorem 0}-\ref{Main Theorem 2}.
Throughout this section, $C$  denotes a constant independent of the initial functions and the solutions under consideration,  unless specified otherwise.

We first recall a well known  lemma for a logistic ODE for convenience and then prove Theorems \ref{Main Theorem 0}, \ref{Main Theorem 1}, and \ref{Main Theorem 2} in subsections 4.1, 4.2, and 4.3, respectively.

\begin{lem}
\label{global-existence-lemma}
Consider the ODE
\begin{equation}
\label{global-stability-ode}
\dot u=u(a_0-b_0 u),
\end{equation}
where $a_0,b_0$ are positive constants. Let $u(t;u_0)$ be the solution of \eqref{global-stability-ode} with $u(0;u_0)=u_0\in\R$. Then
for any $u_0>0$,
$$
\lim_{t\to\infty} u(t;u_0)=\frac{a_0}{b_0}.
$$
\end{lem}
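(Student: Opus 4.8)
The plan is to reduce the nonlinear logistic ODE to a linear one via the standard Bernoulli substitution, solve the resulting linear equation explicitly, and read off the limit. Since we only care about $u_0>0$, and since $u\equiv 0$ and $u\equiv a_0/b_0$ are the equilibria, the trajectory starting at $u_0>0$ can never reach $0$, so the change of variables $w=1/u$ is legitimate along the solution.

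First I would fix $u_0>0$ and note that, by the Picard--Lindel\"of theorem, $u(t;u_0)$ exists and is unique on a maximal interval and stays strictly positive there (it cannot cross the equilibrium $u\equiv 0$). On this interval I set $w=1/u$; differentiating gives
$$
\dot w = -\frac{\dot u}{u^{2}} = -\frac{a_0 u - b_0 u^{2}}{u^{2}} = -a_0 w + b_0,
$$
that is, the linear first-order equation $\dot w + a_0 w = b_0$ with $w(0)=1/u_0$.

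Next I would solve this linear equation explicitly:
$$
w(t) = \frac{b_0}{a_0} + \Big(\frac{1}{u_0} - \frac{b_0}{a_0}\Big)e^{-a_0 t} = \frac{b_0}{a_0}\big(1 - e^{-a_0 t}\big) + \frac{1}{u_0}e^{-a_0 t}.
$$
The right-hand side is a sum of strictly positive terms for every $t\geq 0$, so $w(t)>0$ for all $t\geq 0$; this shows that $u(t;u_0)=1/w(t)$ remains finite and positive, hence the solution is in fact global on $[0,\infty)$ and the substitution is valid throughout. Letting $t\to\infty$ yields $w(t)\to b_0/a_0$, and therefore
$$
\lim_{t\to\infty} u(t;u_0) = \lim_{t\to\infty}\frac{1}{w(t)} = \frac{a_0}{b_0}.
$$

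There is essentially no serious obstacle here; the only point needing a little care is justifying that the trajectory neither hits $0$ nor escapes to $+\infty$ in finite time, which is exactly what the positivity of $w(t)$ above guarantees. As an alternative and a sanity check I would note the purely qualitative phase-line argument: $\dot u>0$ on $(0,a_0/b_0)$ and $\dot u<0$ on $(a_0/b_0,\infty)$, so $u(t;u_0)$ is monotone and bounded and hence converges to the unique equilibrium $a_0/b_0$ toward which it is moving; this avoids the explicit computation at the cost of not exhibiting the exponential rate $e^{-a_0 t}$ that the first approach provides.
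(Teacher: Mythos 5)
Your proof is correct. Note that the paper does not actually prove this lemma: it is stated as a ``well known lemma for a logistic ODE'' recalled for convenience, so there is no argument in the paper to compare against. Your Bernoulli substitution $w=1/u$, the explicit solution $w(t)=\frac{b_0}{a_0}+\bigl(\frac{1}{u_0}-\frac{b_0}{a_0}\bigr)e^{-a_0t}$, and the positivity of $w$ (which simultaneously rules out finite-time blow-up of $u$ and hitting the equilibrium $0$) give a complete and self-contained justification, and the phase-line argument you sketch at the end is an equally valid alternative. One microscopic wording issue: at $t=0$ the term $\frac{b_0}{a_0}(1-e^{-a_0t})$ vanishes rather than being strictly positive, but $w(0)=1/u_0>0$ anyway, so the conclusion $w(t)>0$ for all $t\ge 0$ stands.
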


\subsection{Proof of Theorem \ref{Main Theorem 0}}

In this subsection, we prove  Theorem \ref{Main Theorem 0}.

\begin{proof}[Proof of Theorem \ref{Main Theorem 0}]
Let $(u,v)$ be the classical local nonnegative solution given by Theorem \ref{Local existence1} defined on the maximal interval $[0, T_{\max}^\infty(u_0)).$ We have that
\begin{eqnarray*}
u_{t}& = &\De u -\chi\nabla u\nabla v-(b-\chi)u^{2}+au -\chi uv\nonumber\\
 & \leq & \De u -\chi\nabla u\nabla v-(b-\chi)u^{2}+au.
\end{eqnarray*}
Let $u(t, \|u_{0}\|_{\infty})$ be solution of  the initial value problem,
\begin{equation*}\label{ODE1}
\begin{cases}
u'= -(b-\chi)u^{2}+au \\
u(0)=\max{u_{0}}.
\end{cases}
\end{equation*}
Since $b-\chi\geq 0$, then $u(t, \|u_{0}\|_{\infty})$ is globally defined in time. Since  $u_{0}\leq u(0, \|u_{0}\|_{\infty})$ , by the comparison principle for parabolic equations we have that
\begin{equation}\label{Eq_Cor1}
u(x,t)\leq u(t, \|u_{0}\|_{\infty})
\end{equation}
 for all $x\in\R^N$ and $t\geq 0.$ Hence $u(x,t)$ is globally defined in time.  Furthermore, by Lemma \ref{global-existence-lemma}, if $ \chi<b$, then
\begin{equation}\label{Eq_Cor2}
u(t, \|u_{0}\|_{\infty})\rightarrow \frac{a}{b-\chi}\ \ \ \text{as} \ \ \ t\rightarrow \infty.
\end{equation}
This completes the proof of the theorem.
\end{proof}

\subsection{Proof of Theorem \ref{Main Theorem 1}}

In this subsection, we prove Theorem \ref{Main Theorem 1}. In order to do so,  we first prove an important theorem and some technical lemmas.
Throughout this section,  we let
 $\alpha\in (\frac{1}{2}, 1), \delta\in[0,2\alpha-1)$ and $p>N$ such that $\frac{(2\alpha-1-\delta)p}{N}>1.$ Let $X=L^{p}(\R^{N})$ and
 $X^\alpha$ be the fractional power space of $\Delta-I$ on $X$.

\begin{lem}\label{Lp bound}
Suppose that  satisfies the hypothesis of Theorem \ref{Local existence2} and $(u,v)$ is the solution of \eqref{IntroEq1} as in Theorem
\ref{Local existence2}. For every $r\geq 1$ satisfying
\begin{equation*}\label{Eq1_Lp bound}
r\leq \frac{\chi}{(\chi-b)_{+}},
\end{equation*}
we have that
\begin{equation}\label{Eq2_Lp bound}
\|u(\cdot,t)\|_{L^{r}(\R^N)}\leq \|u_{0}\|_{L^{r}(\R^N)}e^{at} \ \ \ \forall \ t\in[0, \ T_{max}^\alpha(u_0)).
\end{equation}
\end{lem}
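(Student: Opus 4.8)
The plan is to run an $L^{r}$ energy estimate directly on the first equation of \eqref{IntroEq1}, using the second equation to eliminate $\Delta v$ and the nonnegativity of $(u,v)$ to discard the unfavorable terms. We may assume $u_{0}\in L^{r}(\R^N)$, since otherwise the asserted bound is trivial. First I would fix $r$ in the admissible range and $0<t<T_{\max}^{\alpha}(u_{0})$, and multiply $u_{t}=\Delta u-\chi\nabla\cdot(u\nabla v)+au-bu^{2}$ by $r u^{r-1}$ and integrate over $\R^{N}$. For $t>0$ the function $u(\cdot,t)$ is a nonnegative classical solution lying in $C_{\rm unif}^{b}(\R^{N})\cap X^{\beta}$ for every $\beta<1$ by Theorem \ref{Local existence2}, so it decays fast enough in an integral sense that all the integrations by parts below produce no boundary contributions at infinity.

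The three flux terms are then handled as follows. The diffusion term, after one integration by parts, gives
\[
r\int_{\R^{N}}u^{r-1}\Delta u\,dx=-\frac{4(r-1)}{r}\int_{\R^{N}}|\nabla u^{r/2}|^{2}\,dx\le 0,
\]
which I simply discard. For the chemotaxis term I integrate by parts twice and use $u^{r-1}\nabla u=\tfrac1r\nabla(u^{r})$ together with the identity $\Delta v=v-u$ coming from the second equation of \eqref{IntroEq1}:
\[
-\chi r\int_{\R^{N}}u^{r-1}\nabla\cdot(u\nabla v)\,dx=\chi(r-1)\int_{\R^{N}}\nabla(u^{r})\cdot\nabla v\,dx=-\chi(r-1)\int_{\R^{N}}u^{r}(v-u)\,dx.
\]
Since $u\ge 0$, $v\ge 0$ and $r\ge 1$, the contribution $-\chi(r-1)\int u^{r}v\,dx$ is nonpositive and can be dropped, leaving $+\chi(r-1)\int u^{r+1}\,dx$. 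Collecting everything and using $au\cdot ru^{r-1}=ar\,u^{r}$ and $-bu^{2}\cdot ru^{r-1}=-br\,u^{r+1}$,
\[
\frac{d}{dt}\int_{\R^{N}}u^{r}\,dx\le\big[\chi(r-1)-br\big]\int_{\R^{N}}u^{r+1}\,dx+ar\int_{\R^{N}}u^{r}\,dx.
\]

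The decisive point is that $\chi(r-1)-br=(\chi-b)r-\chi\le 0$ is exactly equivalent to $r\le\chi/(\chi-b)_{+}$: if $\chi\le b$ it holds automatically (and the admissible bound is $+\infty$), while if $\chi>b$ it is precisely the stated hypothesis. Discarding this nonpositive term leaves the linear differential inequality $y'(t)\le ar\,y(t)$ for $y(t):=\|u(\cdot,t)\|_{L^{r}}^{r}$, whence Gronwall's inequality gives $y(t)\le y(0)e^{art}$; taking $r$-th roots yields \eqref{Eq2_Lp bound}, with the value at $t=0$ recovered by continuity in $L^{r}$.

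The main obstacle, and the only place needing real care, is the rigorous justification of differentiating $\|u(\cdot,t)\|_{L^{r}}^{r}$ under the integral sign and of the two integrations by parts, especially for $1\le r<2$ where $u^{r-2}$ is singular at the zeros of $u$. I would circumvent this by keeping the diffusion contribution in the manifestly integrable form $\int|\nabla u^{r/2}|^{2}$, so that only its nonnegativity (not its finiteness) is used, and by first carrying out the computation on $(t_{1},T)$ with $t_{1}>0$, where $u$ is smooth and enjoys the decay furnished by membership in the fractional power spaces; the estimate is then extended as $t_{1}\to 0^{+}$ using continuity in $L^{r}$. Alternatively, the whole argument can be run on the approximating solutions $u_{n}$ with data $u_{0n}\in L^{p}(\R^N)\cap C_{\rm unif}^{b}(\R^N)$ constructed in the proof of Theorem \ref{Local Existence3} and then passed to the limit.
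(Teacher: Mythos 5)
Your proposal is correct and follows essentially the same route as the paper: multiply by $u^{r-1}$, integrate by parts, use $\Delta v=v-u$ and the nonnegativity of $u,v$ to drop the diffusion and $\int u^{r}v$ terms, observe that the coefficient of $\int u^{r+1}$ is nonpositive exactly when $r\le\chi/(\chi-b)_{+}$, and conclude by Gronwall. The extra care you devote to justifying the integrations by parts and the case $1\le r<2$ only makes explicit what the paper's argument leaves implicit.
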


\begin{proof} Let $1\leq r\leq \chi/(\chi-b)_{+}$. If $\|u_{0}\|_{L^{r}(\R^N)}=\infty$ there is nothing show. Hence we might suppose that $\|u_{0}\|_{ L^{r}(\R^N)}<\infty$. Let us set  $\delta_{r}:=b-\frac{\chi(r-1)}{r}\geq 0.$ We multiply the first equation in \eqref{IntroEq1} by $u^{r-1},$ and   integrating it, we obtain
\begin{align*}\label{Eq1}
\frac{1}{r}\frac{d}{dt}\int_{\R^N}u^{r} &=  - \int_{\R^N}\nabla u\nabla u^{r-1} + \frac{\chi(r-1)}{r}\int_{\R^N}\nabla u^{r}\nabla v  + \int_{\R^N}(au^{r} -bu^{r+1})\nonumber\\
&=-(r-1) \int_{\R^N} u^{r-2}|\nabla u|^{2} - \frac{\chi(r-1)}{r}\int_{\R^N} u^{r}\De v + \int_{\R^N}(a -bu)u^{r}\nonumber\\
&=-(r-1) \int_{\R^N} u^{r-2}|\nabla u|^{2} -\frac{\chi(r-1)}{r}\int_{\R^{N}}u^{r}v -\underbrace{(b- \frac{\chi(r-1)}{r})}_{\de_{r}}\int_{\R^N} u^{r+1} + \int_{\R^N}a u^{r}\nonumber\\
&\leq   a\int_{\R^N} u^{r}.
\end{align*}
\eqref{Eq2_Lp bound} then follows.
\end{proof}

A natural question that one could ask is under which condition on the expression $\frac{\chi}{(\chi-b)_{+}},$ the $L^{r}-$ a priori estimate in Lemma \ref{Lp bound} can be extended to all $r\geq 1$ or for at least for every $r=p$. An obvious condition would be to require that $p \leq \frac{\chi}{(\chi-b)_{+}}$ so that the hypothesis of Lemma \ref{Lp bound} are satisfied . Hence Lemma \ref{Lp bound} and Theorem \ref{Lp-Bound1}  have a direct consequence that we formulate in the next result.

\medskip

\begin{coro}\label{corol}
Suppose that $u_{0}$ satisfies the hypothesis of Theorem \ref{Local existence2} and $p\leq \frac{\chi}{(\chi-b)_{+}}.$ Then the solution $(u,v)$ of \eqref{IntroEq1} with initial data $u_0$  is global in time.
\end{coro}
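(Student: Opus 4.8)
The plan is to read the statement as a direct combination of the a priori $L^p$ bound from Lemma~\ref{Lp bound} with the blow-up alternative and the identification of maximal existence times in Theorem~\ref{Lp-Bound1}. The first step is to apply Lemma~\ref{Lp bound} with $r=p$. This is permissible exactly because the standing assumption $p\le \frac{\chi}{(\chi-b)_+}$ forces $\delta_p:=b-\frac{\chi(p-1)}{p}\ge 0$, so that the dissipative term in the energy identity keeps the favourable sign and the lemma applies with $r=p$. It yields
\begin{equation}
\label{corol-plan-eq1}
\|u(\cdot,t;u_0)\|_{L^p(\R^N)}\le \|u_0\|_{L^p(\R^N)}\,e^{at}\qquad \forall\, t\in[0,T_{\max}^\alpha(u_0)),
\end{equation}
which is the main analytic input needed.

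The second step is a contradiction argument. Suppose $T_{\max}^\alpha(u_0)<\infty$. Then the right-hand side of \eqref{corol-plan-eq1} is bounded by $\|u_0\|_{L^p(\R^N)}e^{a T_{\max}^\alpha(u_0)}<\infty$, so $\sup_{0\le t<T_{\max}^\alpha(u_0)}\|u(\cdot,t;u_0)\|_{L^p(\R^N)}<\infty$. Since $u_0$ satisfies the hypotheses of Theorem~\ref{Local existence2}, we have $u_0\in X^\alpha$ with $p>N$; when $p\ge 2$ (which holds automatically whenever $N\ge 2$, as then $p>N\ge 2$), Theorem~\ref{Lp-Bound1}(2) identifies $T_{\max}^\alpha(u_0)=T_{\max}^p(u_0)$, and the blow-up criterion of Theorem~\ref{Local Existence3} then forces $\|u(\cdot,t;u_0)\|_{L^p(\R^N)}\to\infty$ as $t\to T_{\max}^p(u_0)$. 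This contradicts \eqref{corol-plan-eq1}, so $T_{\max}^\alpha(u_0)=\infty$ and, by Theorem~\ref{Local existence2}, $(u,v)$ is a global classical solution.

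The step I expect to be the main obstacle is bridging the gap between the quantity actually controlled by Lemma~\ref{Lp bound}, namely the $L^p$ norm, and the norm in which the blow-up alternative is phrased. In the case $p\ge 2$ this bridge is supplied cleanly by Theorem~\ref{Lp-Bound1}(2) together with the $L^p$ blow-up criterion of Theorem~\ref{Local Existence3}, so no further work is required. In the borderline regime $N=1$ with $p\in(1,2)$, where part~(2) of Theorem~\ref{Lp-Bound1} is not directly available, I would instead invoke Theorem~\ref{Lp-Bound1}(1) to write $T_{\max}^\alpha(u_0)=T_{\max}^\infty(u_0)$ and then upgrade \eqref{corol-plan-eq1} to a uniform $L^\infty$ bound by a standard parabolic bootstrap: feeding the bound into the variation-of-constants formula and estimating the chemotactic term $\chi\nabla\cdot(u\nabla(\Delta-I)^{-1}u)$ with the smoothing inequality \eqref{EqL_Infty_002} of Lemma~\ref{L_Infty bound} and the quadratic term with \eqref{Lp Estimates}, while controlling $\nabla(\Delta-I)^{-1}u$ through elliptic regularity, and closing the resulting integral inequality with Lemma~\ref{Lem2}. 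A uniform $\|u\|_\infty$ bound contradicts the $L^\infty$ blow-up alternative of Theorem~\ref{Local existence1}, again giving $T_{\max}^\alpha(u_0)=\infty$.
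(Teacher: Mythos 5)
Your proof is correct and follows essentially the same route as the paper: Lemma \ref{Lp bound} with $r=p$ (legitimate precisely because $p\le \chi/(\chi-b)_{+}$) gives the exponential $L^{p}$ bound on $[0,T_{\max}^{\alpha}(u_0))$, and then Theorem \ref{Lp-Bound1} combined with the $L^{p}$ blow-up criterion of Theorem \ref{Local Existence3} forces $T_{\max}^{\alpha}(u_0)=T_{\max}^{p}(u_0)=\infty$. Your additional treatment of the borderline case $N=1$, $p\in(1,2)$ (where Theorem \ref{Lp-Bound1}(2) requires $p\ge 2$) is a sensible patch for a point the paper's one-line appeal to Theorem \ref{Lp-Bound1} passes over in silence, and the $L^{p}$-to-$L^{\infty}$ bootstrap you sketch there does close (note you can simply drop the $-b\,u^{2}$ term by positivity rather than estimating it).
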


\begin{proof}We have that $p\leq \frac{\chi}{(\chi-b)_{+}},$  hence according to Lemma \ref{Lp bound}  we have
\begin{equation}\label{Eq_3 Lp bound} \|u(t)\|_{L^{p}(\R^N)} \leq \|u_{0}\|_{L^{p}(\R^N)}e^{at},\quad \ \quad \forall \,\, 0\le t <T_{\max}^\alpha(u_0).
\end{equation}
By  Theorem \ref{Lp-Bound1}, we have  that $T_{max}^\alpha(u_0)=T_{\max}^p(u_0)=+\infty.$
\end{proof}

\medskip

Note that $p$ was chosen to be strictly greater than $N.$ Thus, it would be nice to find a relationship between $N$ and the expression $\frac{\chi}{(\chi-b)_{+}}$ that will guarantee the existence of a global solution.  Theorem \ref{Main Theorem 1} provides such a sufficient condition to obtain a global in time solution. Note that this condition is weaker than the one giving by Corollary $\ref{corol}$.

\begin{proof}[Proof of Theorem \ref{Main Theorem 1}]

 We divide the proof into two steps. In the first part, we prove that  $L^{r}-$ norms of the $u(t)$ can be bounded by continuous function as required in Theorem \ref{Lp-Bound1}.  We then conclude that $T_{max}:=T_{\max}^\alpha(u_0)=+\infty.$ The last part is the proof of inequality $(\ref{Eq2_Main Theorem})$.

Note that $\frac{\chi}{(\chi-b)_{+}}>1,$ so we can choose $q_{1}\in \left(\max\{1, \frac{N}{2}\}, \min\{p,\ \frac{\chi}{(\chi-b)_{+}}\}\right).$ We have $\de_{q_{1}}=b-\frac{\chi(q_1-1)}{q_1}>0$, and Lemma \ref{Lp bound}, implies that
$$
\|u(\cdot,t)\|_{L^{q_{1}}(\R^{N})}\leq \|u_{0}\|_{L^{q_{1}}(\R^{N})}e^{at} \ \ \forall\ t\in [0, \ T_{max}).
$$

\noindent \textbf{Step 1.}\label{Main_Tm Lem1} We claim that for all $r\geq q_{1}$,
\begin{equation}\label{Step 1:}\|u(t)\|_{L^{r}(\R^N)}\leq \left[\|u_{0}\|_{L^{r}(\R^N)}+ K_{r}^{\frac{1}{r}}t^{\frac{1}{r}}\|u_{0}\|_{L^{q_{1}}(\R^N)}^{\frac{\lambda_{r}}{r}}e^{(\lambda_{r} -1)at} \right]e^{at} \ \ \ \forall \ t\in[0, \ T_{max}),\ \end{equation} where $K_{r}$ and $\lambda_{r}$ are nonnegative real numbers depending on $a,\ b,\ \chi,\ r$ and $N$ with $\lambda_{r}>1.$

 We multiply the first equation in \eqref{IntroEq1} by $u^{r-1},$ and after integrating it by part, we obtain that
\begin{eqnarray}\label{Eq2}
\frac{1}{r}\frac{d}{dt}\int_{\R^N}u^{r}
 &\leq &-(r-1) \int_{\R^N}u^{r-2}|\nabla u|^{2}-(b-\chi\frac{(r-1)}{r})\int_{\R^N}u^{r+1} + a\int_{\R^N}u^{r} \nonumber\\
 &=&-\frac{4(r-1)}{r^{2}}\int_{\R^N}|\nabla u^{\frac{r}{2}}|^{2}-(b-\chi\frac{(r-1)}{r})\int_{\R^N}u^{r+1} + a\int_{\R^N}u^{r}.
\end{eqnarray}
From Lemma $\ref{Pre1}$ it follows that
$$
\|u(t)\|_{L^{r+1}(\R^N)}\leq C_{0}^{\frac{2}{r\beta}}\|u(t)\|_{L^{q_{1}}(\R^N)}^{1-\theta}\|\nabla u^{\frac{r}{2}}\|_{L^{2}(\R^N)}^{\frac{2\theta}{r}},
$$
where
$$
\theta =\frac{r}{2}\left(\frac{\frac{1}{q_{1}}-\frac{1}{r+1}}{\frac{r}{2q_{1}}+\frac{1}{N}-\frac{1}{2}}\right),
$$
$$
\beta=\begin{cases}\frac{r+1-\frac{r}{2}}{r+1-q_{1}}\left(  \frac{2q_{1}}{r}+\left( 1-\frac{2q_{1}}{r}\right)\frac{2N}{N+2}\right) \quad \text{if }\ r>2q_{1}\\
1 \hspace{5.25cm} \text{if }\ r\leq 2q_{1},
\end{cases}$$
and $C_{0}$ depends only on $N.$ Notice that we used $a=3$ in Lemma $\ref{Pre1}.$
Hence
\begin{equation}\label{Eq3}
\int_{\R^{N}}u(t)^{r+1}\leq C_{0}^{\frac{2(r+1)}{r\beta}}\|u(t)\|_{L^{q_{1}}(\R^N)}^{(1-\theta)(r+1)}\|\nabla u^{\frac{r}{2}}(t)\|_{L^{2}(\R^N)}^{\frac{2\theta(r+1)}{r}}.
\end{equation}
Observe from the choice of $q_{1}$ that
$$
\frac{\theta(r+1)}{r}=\frac{\frac{r+1}{q_{1}}-1}{\frac{r}{q_{1}}+\frac{2}{N}-1}=\frac{\frac{r}{q_{1}}-1+ \frac{1}{q_{1}}}{\frac{r}{q_{1}}-1 + \frac{2}{N}}<1.
$$
Combining this with $(\ref{Eq3})$ and using Young's inequality, for every $\varepsilon>0,$ we have that
\begin{equation*}
\int_{\R^N}u(t)^{r+1}\leq\varepsilon\|\nabla u^{\frac{r}{2}}\|_{L^{2}(\R^N)}^{2} + \varepsilon^{-\frac{\theta(r+1)}{r-\theta(r+1) }} C_{0}^{\frac{2(r+1)}{(r-\theta(r+1))\beta}}\|u(t)\|_{L^{q_{1}}(\R^N)}^{\frac{(1-\theta)(r+1)r}{r-\theta(r+1)}},
\end{equation*}
which is equivalent to
\begin{equation}\label{Eq4}
-\|\nabla u^{\frac{r}{2}}\|_{L^{2}(\R^N)}^{2}\leq -\frac{1}{\varepsilon}\int_{\R^N}u(t)^{r+1} + \varepsilon^{-\left(1+\frac{\theta(r+1)}{r-\theta(r+1) }\right)} C_{0}^{\frac{2(r+1)}{(r-\theta(r+1))\beta}}\|u(t)\|_{L^{q_{1}}(\R^N)}^{\frac{(1-\theta)(r+1)r}{r-\theta(r+1)}}.
\end{equation}
Combining inequalities $(\ref{Eq2})$ and $(\ref{Eq4}),$ we obtain that
\begin{align*}
\frac{1}{r}\frac{d}{dt}\int_{\R^N}u^{r}\leq & -\underbrace{\left(\frac{4(r-1)}{\varepsilon r^{2}}+ b-\chi\frac{(r-1)}{r}\right)}_{\varepsilon_{r}}\int_{\R^N}u^{r+1}\\
 & + a\int_{\R^N}u^{r} +\frac{4(r-1)C_{0}^{\frac{2(r+1)}{(r-\theta(r+1))\beta}}}{r^{2}\varepsilon^{\left(1+\frac{\theta(r+1)}{r-\theta(r+1) }\right)}}\|u(t)\|_{L^{q_{1}}(\R^N)}^{\frac{(1-\theta)(r+1)r}{r-\theta(r+1)}}.  \nonumber\\
\end{align*}
If we choose $\varepsilon>0$ such that $\varepsilon_{r}\geq 0$ (for example $\varepsilon=\frac{4}{r\chi  }$ yields $\varepsilon_{r}=b$ ), we obtain that
\begin{eqnarray*}
\frac{1}{r}\frac{d}{dt}\int_{\R^N}u^{r}&\leq& a\int_{\R^N}u^{r} +\frac{4(r-1)C_{0}^{\frac{2(r+1)}{(r-\theta(r+1))\beta}}}{r^{2}\varepsilon_{r}^{\left(1+\frac{\theta(r+1)}{r-\theta(r+1) }\right)}}\|u(t)\|_{L^{q_{1}}(\R^N)}^{\frac{(1-\theta)(r+1)r}{r-\theta(r+1)}}.  \nonumber\\
&\leq& a\int_{\R^N}u^{r} +\frac{4(r-1)C_{0}^{\frac{2(r+1)}{(r-\theta(r+1))\beta}}}{r^{2}\varepsilon_{r}^{\left(1+\frac{\theta(r+1)}{r-\theta(r+1) }\right)}}\left[e^{at}\|u_{0}\|_{L^{q_{1}}(\R^N)}\right]^{\frac{(1-\theta)(r+1)r}{r-\theta(r+1)}}.  \nonumber\\
\end{eqnarray*}
It then follows by  Gronwall's inequality and the mean value theorem that
\begin{equation*}
\|u(t)\|_{L^{r}(\R^N)}^{r}\leq \left[\|u_{0}\|_{L^{r}(\R^N)}^{r}+ K_{r}\|u_{0}\|_{L^{q_{1}}(\R^N)}^{\lambda_{r}}te^{(\lambda_{r} -1)art} \right]e^{art} \ \ \ \forall \ t\in[0, \ T_{max}),
\end{equation*}
with
$$
\lambda_{r}= \frac{(1-\theta)(r+1)}{r-\theta(r+1)}
 \quad {\rm and}\quad  K_{r}=\frac{4(r-1)C_{0}^{\frac{2(r+1)}{(r-\theta(r+1))\beta}}}{r\varepsilon_{r}^{\left(1+\frac{\theta(r+1)}{r-\theta(r+1) }\right)}}.$$
 \eqref{Step 1:} then follows.

\medskip

\noindent \textbf{Step 2.} It follows from Theorem \ref{Local existence2}, Theorem \ref{Lp-Bound1} , Lemma \ref{Lp bound} and  Step 1 that \eqref{IntroEq1} has a unique global classical solution $(u,v)$. To complete the proof of this theorem, we need to prove the following estimate.
\begin{equation*}
\|u(t)|_{L^{\infty}(\R^N)}\leq C_{1}t^{-\frac{N}{2p}}e^{-t}\|u_{0}\|_{L^{p}(\R^N)} + C_{2} \left[ \|u_{0}\|_{L^{p}(\R^N)} + {K}_{p}^{\frac{1}{p}}\|u_{0}\|_{L^1(\R^N)}^{\frac{\tilde{\tilde{\lambda}}_{p}}{p}}\|u_{0}\|_{L^p(\R^N)}^{\frac{\tilde{\lambda}_{p}}{p}}t^{\frac{1}{p}}e^{({\lambda}_{p}-1)at} \right]e^{at},
\end{equation*}
where ${\lambda}_{p},\tilde{\lambda}_{p}, \tilde{\tilde{\lambda}}, {K}_{p}, C_{1}$ and $C_{2}$ are positive constants depending on $N ,\ p,\ a,\ b,$ and $\chi$.  Indeed,  let us recall that
$$
u(t)=T(t)u_{0}-\chi\underbrace{\int_{0}^{t}T(t-s)\nabla(u\nabla v)(s)ds}_{J_{1}} + (a+1)\underbrace{\int_{0}^{t}T(t-s)u(s)ds}_{J_{2}} -b\underbrace{\int_{0}^{t}T(t-s)u^{2}(s)ds}_{J_{3}}.
$$
Note that
\begin{equation}
\label{Eq72}
\|v\|_{W^{2,p}(\R^N)}\le C\|u\|_{L^p(\R^N)}.
\end{equation}
Using Lemma $\ref{L_Infty bound}$ and inequalities $(\ref{Step 1:})$  and $(\ref{Eq72})$  we have that
\begin{eqnarray*}
\|J_{1}\|_{L^{\infty}}&\leq& C\int_{0}^{t}(t-s)^{-\frac{1}{2}-\frac{N}{2p}}e^{-(t-s)}\|(u\nabla v)(s)\|_{L^{p}(\R^N)}ds\nonumber\\
&\leq& C\int_{0}^{t}(t-s)^{-\frac{1}{2}-\frac{N}{2p}}e^{-(t-s)}\|u\|_{L^{p}(\R^N)}\| v\|_{C^{1,b}(\R^N)}ds\nonumber\\
&\leq & C\int_{0}^{t}(t-s)^{-\frac{1}{2}-\frac{N}{2p}}e^{-(t-s)}\|u\|_{L^{p}(\R^N)}^{2}ds\nonumber\\
&\leq & C\int_{0}^{t}(t-s)^{-\frac{1}{2}-\frac{N}{2p}}e^{-(t-s)}\left[\|u_{0}\|_{L^{p}(\R^N)}+ K_{p}^{\frac{1}{p}}s^{\frac{1}{p}}\|u_{0}\|_{L^{q_{1}}(\R^N)}^{\frac{\lambda_{p}}{p}}e^{(\lambda_{p} -1)as} \right]^{2}e^{2as}ds\nonumber\\
&\leq & C \left[\|u_{0}\|_{L^{p}(\R^N)}+ K_{p}^{\frac{1}{p}}t^{\frac{1}{p}}\|u_{0}\|_{L^{q_{1}}(\R^N)}^{\frac{\lambda_{p}}{p}}e^{(\lambda_{p} -1)at} \right]^{2}e^{2at}\Ga\left(\frac{1}{2}-\frac{N}{2p}\right).
\end{eqnarray*}
Since $\frac{1}{2}+\frac{N}{2p}\in (\frac{1}{2}, 1),$ we have that $X^{\frac{1}{2}+\frac{N}{2p}}$ is continuously embedded in $L^{\infty}(\mathbb{R}^{N}).$ Thus
\begin{eqnarray*}
\|J_{2}\|_{L^{\infty}}
&\leq & C\int_{0}^{t}\|T(t-s)u(s)\|_{X^{\frac{1}{2}+\frac{N}{2p}}}ds\nonumber\\
&\leq & C\int_{0}^{t}(t-s)^{-\frac{1}{2}-\frac{N}{2p}}e^{-(t-s)}\|u(s)\|_{L^{p}(\R^N)}ds\nonumber\\
&\leq & C \left[\|u_{0}\|_{L^{p}(\R^N)}+ K_{p}^{\frac{1}{p}}\|u_{0}\|_{L^{q_{1}}(\R^N)}^{\frac{\lambda_{p}}{p}}t^{\frac{1}{p}}e^{(\lambda_{p} -1)at} \right]e^{at}\Ga\left(\frac{1}{2}-\frac{N}{p}\right).
\end{eqnarray*}
By \eqref{Lp Estimates}, we have
\begin{equation}\label{alternative  inequality}
\|T(t)u_{0}\|_{\infty}\leq {C}t^{-\frac{N}{2p}}e^{-t}\|u_{0}\|_{L^{p}(\R^N)}.
\end{equation}
Since $u^{2}(s)\geq 0$ for all $s\geq 0$, then $J_{3}\geq 0$. Combining theses with the fact that $u(t)\geq 0$, we obtain that
$$
\|u(t)\|_{\infty}\leq \|T(t)u_{0}\|_{\infty}+ \chi\|J_{1}(t)\|_{\infty}+ (a+1)\|J_{2}(t)\|_{\infty}
$$
Therefore we conclude that
$$
\|u(t)\|_{L^{\infty}}\leq C_{1}t^{-\frac{N}{2p}}e^{-t}\|u_{0}\|_{L^{p}(\R^N)} + C_{2} \left[ \|u_{0}\|_{L^{p}(\R^N)} + {K}_{p}^{\frac{1}{p}}\|u_{0}\|_{L^{q_{1}}(\R^N)}^{\frac{{\lambda}_{p}}{p}}t^{\frac{1}{p}}e^{({\lambda}_{p}-1)at} \right]^{2}e^{2at},
$$
where $C_{1}$ and $C_{2}$ are positive constants depending on $N ,\ p,\ a,\ b,$ and $\chi$. Now, since $1< q_{1}<p$, then $\|u_{0}\|_{q_{1}}\leq \|u_{0}\|_{1}^{\lambda}\|u_{0}\|_{p}^{1-\lambda}$ for $\lambda=\frac{p-q_{1}}{q_{1}(p-1)}$. Thus the Theorem follows.
\end{proof}
\begin{rk}\label{Remark}We first point out that Theorem \ref{Main Theorem 1} does not extend Theorem \ref{Main Theorem 0}  because it requires for $\|u_{0}\|_{L^{1}}+\|u_{0}\|_{L^{p}}$ to be finite. Also, it should be noted that using \eqref{Lp Estimates}, inequality \eqref{alternative  inequality} can be replaced by
\begin{equation}
\|T(t)u_{0}\|_{\infty}\leq C_{1}t^{-\frac{N}{2}}e^{-t}\|u_{0}\|_{L^1(\R^N)}.
\end{equation}
On the other hand,
under the hypothesis of Corollary  \ref{corol}, that is if $ p\leq \frac{\chi}{(\chi-b)_{+}}$, by following the arguments used in the second part of the proof of Theorem \ref{Main Theorem 1} and making use of inequality \eqref{Eq_3 Lp bound} we obtain that
\begin{description}
\item[(i)] There is a constant $C>0$ depending on $a, b, \chi, N$ and $p$ such
\begin{equation*} \|u(t)\|_{L^{\infty}}\leq \text{C}\left[t^{-\frac{N}{2p}}e^{-(1+2a)t} + \|u_{0}\|_{L^{p}(\R^N)}^{2}\right]e^{2at}\quad \forall\ \ t>0.
\end{equation*}
\item[(ii)] For every $\varepsilon>0,$ we have that
\begin{equation*}
\lim_{t\rightarrow \infty}e^{-(2a+\varepsilon)t}\|u(t)-T(t)u_{0}\|_{L^{\infty}}=0
\end{equation*}
\item[(iii)] If in addition $a=0$ then
\begin{equation*}
\sup_{t\geq 0}\|u(t)\|_{L^{\infty}}<\infty.
\end{equation*}
\end{description}
\end{rk}

\subsection{Proof of Theorem \ref{Main Theorem 2}}

In this subsection,
 we extend the results of the previous section to more initial data set and prove Theorem \ref{Main Theorem 2}. Note that the choice of the initial data $u_{0}\in X^{\alpha}$ in Theorem \ref{Main Theorem 1} depends on $N, p$ and $\alpha\in(\frac{1}{2}, 1).$ Since $X^{\beta}$ is continuously imbedded in $X^{\alpha}$ for $\beta\geq \alpha,$ then Theorem \ref{Main Theorem 1}  covers any nonnegative initial data in $X^{\alpha}$ with $\alpha\geq 1.$

\begin{proof}[Proof of Theorem \ref{Main Theorem 2}]
Let $\varphi$ be a nonnegative smooth mollifier function with $\|\varphi\|_{L^{1}}=1.$ For every $\varepsilon>0$ let $\varphi_{\varepsilon}(x)=\frac{1}{\varepsilon^{N}}\varphi(\frac{1}{\varepsilon}x)$ for every $x\in\R^{N}.$ Next, we define $u_{0n}=\varphi_{\frac{1}{n}}\ast u_{0}$ for every $n\geq 1.$ We have that $u_{0n}\in C^{\infty}(\R^N)\cap W^{k,q}(\R^N)$ for $n\in \mathbb{N}, k\geq 1$ and $q\geq 1.$ Furthermore, we have that $u_{0n}\geq 0$ for every $n$ with
\begin{equation}\label{Eq_11}
\|u_{0n}\|_{L^{q}(\R^N)}\leq \|\varphi_{n}\|_{L^{1}(\R^N)}\|u_{0}\|_{L^{q}(\R^N)}=\|u_{0}\|_{L^{q}(\R^N)} \ \ \quad \forall\ p\geq q\geq 1\, \ n\geq 1
\end{equation}
and
$$
\lim_{n\rightarrow \infty}\|u_{0n}-u_{0}\|_{L^{q}(\R^N)}=0 \ \quad \text{for all}\ q\in[1, p].
$$
Let us choose $\alpha\in(\frac{1}{2},\ 1)$ and $0< \delta\leq 2\alpha -1$ satisfying
$$
\frac{(2\alpha -1-\delta)p}{N}>\frac{1}{2}>\frac{1}{2p}=\frac{1}{p}-\frac{1}{2p}.
$$
Hence, $X^{\alpha}$ is continuously imbedded in $C^{1+\delta}.$  We have that $u_{0n}\in X^{\alpha}$ for all $n\geq 1$.  Thus according to Theorem  \ref{Main Theorem 1}, for every $n\geq 1,$ there is a global in time unique solution $(u_{m}(x,t),v_{m}(x,t))=(u(x,t;u_{0m}),v(x,t;u_{0m}))$ of \eqref{IntroEq1} with initial data $u_{0m}.$

By the arguments of Theorem \ref{Local Existence3},
$$
\lim_{m\to\infty}\big[ \|u(\cdot,t;u_{0m})-u(\cdot,t;u_0)\|_{L^p(\R^N)\cap L^{1}(\R^N)}+\|v(\cdot,t;u_{0m})-v(\cdot,t;u_0)\|_{L^p(\R^N)\cap L^{1}(\R^N)}\big]=0
$$
for any $t$ in the maximal existence interval $[0,T_{\max}^p(u_0))$ of $(u(x,t;u_0),v(x,t;u_0))$. Choose $q_{1}\in \left(\max\{1, \frac{N}{2}\}, \min\{p,\ \frac{\chi}{(\chi-b)_{+}}\}\right).$
By Lemma \ref{Lp bound} and \eqref{Step 1:}, we have
\begin{equation*}
\|u(\cdot,t;u_{0m})\|_{L^{p}(\R^N)\cap L^{1}(\R^N)}\leq \left[\|u_{0m}\|_{L^{p}(\R^N)}+ K_{p}^{\frac{1}{p}}t^{\frac{1}{p}}\|u_{0m}\|_{L^{q_{1}}(\R^N)}^{\frac{\lambda_{p}}{p}}e^{(\lambda_{p} -1)at} \right]e^{at} \ \ \ \forall \ t\in[0, \ T_{max}^p(u_0)).
\end{equation*}
This implies that
 \begin{equation*}
\|u(\cdot,t;u_{0m})\|_{L^{p}(\R^N)\cap L^{1}(\R^N)}\leq \left[\|u_{0}\|_{L^{p}(\R^N)}+ K_{p}^{\frac{1}{p}}t^{\frac{1}{p}}\|u_{0}\|_{L^{q_{1}}(\R^N)}^{\frac{\lambda_{p}}{p}}e^{(\lambda_{p} -1)at} \right]e^{at} \ \ \ \forall \ t\in[0, \ T_{max}^p(u_0))
\end{equation*}
and hence $T_{\max}^p(u_0)=\infty$. Since \eqref{Eq2_Main Theorem} holds for every $u(\cdot,t;u_{0m})$, letting $m\to \infty$, we obtain that $u(\cdot,t:u_{0})$ also satisfies \eqref{Eq2_Main Theorem}  This completes the proof.
\end{proof}
As an immediate consequence of the Theorem \ref{Main Theorem 0} and Theorem \ref{Main Theorem 2} we have the following result.
\begin{coro}
Suppose that assumptions of Theorem \ref{Main Theorem 2} hold and $\chi <b$. Let $u_{0}\in L^{1}(\R^N)\cap L^{p}(\R^N)$ and $(u(\cdot,\cdot:u_{0}),v(\cdot,\cdot;u_{0}))$ be the global classical solution of \eqref{IntroEq1} given by Theorem \ref{Main Theorem 2}. Then for every $T>0$, we have that
\begin{equation}\label{coro4.5}
\sup_{t\geq T}\left[\|u(\cdot,t,u_{0})\|_{\infty}+\|v(\cdot,t,u_{0})\|_{\infty}\right]<\infty.
\end{equation}
\end{coro}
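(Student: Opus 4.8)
The plan is to reduce the statement to the logistic comparison already carried out in Theorem \ref{Main Theorem 0}, but applied from the positive time $T$ rather than from $t=0$. First I would observe that the extra hypothesis $\chi<b$ forces $(\chi-b)_+=0$, so that $\frac{\chi}{(\chi-b)_+}=+\infty$ and condition \eqref{Eq1_Main Theorem} holds automatically; hence Theorem \ref{Main Theorem 2} indeed applies and provides the global classical solution $(u(\cdot,\cdot;u_0),v(\cdot,\cdot;u_0))$ with $u(\cdot,\cdot;u_0)\in C([0,\infty),L^p(\R^N))\cap C([0,\infty),L^1(\R^N))$. The fact that unlocks everything is that, although the datum $u_0$ need not be bounded, the parabolic smoothing places the solution in $C_{\rm unif}^b(\R^N)$ at every positive time: by \eqref{local-3-eq2} one has $u(\cdot,t;u_0)\in X^\beta$ for all $t>0$ and every $\beta<1$, and choosing $\beta\in(\tfrac{N}{2p},1)$ gives, via \eqref{Fractional power Imbedding}, the continuous inclusion $X^\beta\subset C_{\rm unif}^b(\R^N)$. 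Consequently $M_T:=\|u(\cdot,T;u_0)\|_\infty<\infty$ for each fixed $T>0$.

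Next I would run the comparison argument of Theorem \ref{Main Theorem 0} on the time interval $[T,\infty)$. Using $u,v\ge 0$, the first equation of \eqref{IntroEq1} yields, exactly as in that proof, the differential inequality $u_t\le \Delta u-\chi\nabla u\cdot\nabla v-(b-\chi)u^2+au$, so that $u$ is a bounded subsolution of the associated semilinear operator. The spatially homogeneous function $w(t)$ solving the logistic ODE $\dot w=aw-(b-\chi)w^2$ on $[T,\infty)$ with $w(T)=M_T$ is a supersolution dominating $u$ at time $T$, so the comparison principle gives $u(x,t;u_0)\le w(t)$ for all $x\in\R^N$ and $t\ge T$. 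Since $b>\chi$, the logistic solution is monotone and trapped between its initial value and the equilibrium $\frac{a}{b-\chi}$, whence $w(t)\le \max\{M_T,\tfrac{a}{b-\chi}\}$ for all $t\ge T$. This produces $\sup_{t\ge T}\|u(\cdot,t;u_0)\|_\infty\le \max\{M_T,\tfrac{a}{b-\chi}\}<\infty$.

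For the chemical $v$ I would invoke the explicit resolvent representation. By \eqref{Eq_Inv}, $v(\cdot,t;u_0)=\int_0^\infty e^{-s}G(\cdot,s)\ast u(\cdot,t;u_0)\,ds$, and since $G(\cdot,s)\ge 0$ with $\|G(\cdot,s)\|_{L^1}=1$, Young's inequality gives $\|v(\cdot,t;u_0)\|_\infty\le \int_0^\infty e^{-s}\|u(\cdot,t;u_0)\|_\infty\,ds=\|u(\cdot,t;u_0)\|_\infty$. Combining this with the previous bound yields $\sup_{t\ge T}\big[\|u(\cdot,t;u_0)\|_\infty+\|v(\cdot,t;u_0)\|_\infty\big]\le 2\max\{M_T,\tfrac{a}{b-\chi}\}<\infty$, which is \eqref{coro4.5}.

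The only genuinely delicate point I expect is the finiteness of $M_T$, that is, the instantaneous gain of an $L^\infty$ (indeed $C_{\rm unif}^b$) bound out of the merely $L^1\cap L^p$ datum; this is precisely why the comparison must be started at $T>0$ and not at $0$, and it is supplied by the smoothing estimate \eqref{local-3-eq2}. Everything else reduces to the elementary monotonicity bound for the logistic ODE and to the positivity and unit mass of the Bessel kernel. I note in passing that if a bound independent of $u_0$ were desired one could sharpen $w(t)\le\max\{M_T,\tfrac{a}{b-\chi}\}$ to the forgetting estimate $w(t)\le \frac{a}{(b-\chi)(1-e^{-a(t-T)})}$ (for $a>0$), but such a refinement is not needed for the present statement.
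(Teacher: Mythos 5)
Your proposal is correct and follows essentially the same route as the paper: the paper likewise observes that $u(\cdot,T;u_0)\in C_{\rm unif}^b(\R^N)$ by the smoothing in Theorem \ref{Local Existence3} and then repeats the comparison argument of Theorem \ref{Main Theorem 0} (logistic ODE with damping $b-\chi>0$) starting at time $T$. Your additional details — the embedding $X^\beta\subset C_{\rm unif}^b(\R^N)$ justifying $M_T<\infty$ and the bound $\|v(\cdot,t)\|_\infty\le\|u(\cdot,t)\|_\infty$ via the resolvent kernel — are exactly the facts the paper leaves implicit, so there is no gap.
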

\begin{proof}We have that $u(\cdot,\cdot,u_{0})\in C^{2,1}(\R^N\times(0, \infty))$ and  satisfies
\begin{eqnarray}
\partial_{t}u \leq \Delta u -\chi \nabla v(\cdot,\cdot;u_{0}) +(a-(b-\chi)u)u .
\end{eqnarray}
Since $u(\cdot,T,u_{0})\in C^{b}_{\rm unif}(\R^N)$, same arguments as in the proof of Theorem \ref{Main Theorem 0} imply that inequality \eqref{coro4.5} holds.
\end{proof}

\section{Asymptotic behavior of  solutions}
In this section, we discuss the asymptotic behaviors of global bounded  classical solutions of \eqref{IntroEq1} under the assumption that $b>2\chi.$  This will be done in two subsections. The first subsection is devoted for strictly positive initial data. Hence its results apply only for $u_{0}\in C_{\rm unif}^b(\R^N).$ While in the second part we shall deal with initial data with compact supports. Whence there is no restriction on the space $X$ in this case. Again, throughout this section,  $C$  denotes a constant independent of the initial functions and the solutions under consideration, unless specified otherwise.

\subsection{Asymptotic behavior of solutions with strictly positive initial data }

 We shall assume that $\inf{u_{0}}>0$ for $u_{0}\in C_{\rm unif}^b(\R^N)$ with $b>2\chi.$ Following the ideas given in \cite{TeWi} and \cite{Yilong}, we consider the asymptotic behavior of the solution $(u(x,t),v(x,t):=(u(x,t;u_0)$, $v(x,t;u_0))$ of  \eqref{IntroEq1}
 with $u(x,0;u_0)=u_0(x)$. Clearly, the sufficient conditions required for the existence of a unique bounded classical solution $(u,v)$ in  Theorem \ref{Main Theorem 0} are satisfied  and according to \eqref{Eq_Cor1} and \eqref{Eq_Cor2}, it holds that
\begin{equation*}
0\leq u(x,t)\leq u(t,\|u_{0}\|_{\infty}) \ \ \ \forall \ x\in \R^N, \ t\geq0 .
\end{equation*}
with
\begin{equation*}
\lim_{t\rightarrow \infty}u(t,\|u_{0}\|_{\infty})=\frac{a}{b-\chi}.
\end{equation*}
where $u(t, \|u_{0}\|_{\infty})$ is the solution of \eqref{ODE1}.
 Define
\begin{equation}\label{A01}
\overline{u}=\limsup_{t\rightarrow\infty}\left(\sup_{x\in \R^N}u(x,t)\right),\quad
\underline{u}=\liminf_{t\rightarrow\infty}\left(\inf_{x\in \R^N}u(x,t)\right).
\end{equation}
Clearly $0\leq \underline{u}\leq \overline{u}\leq \frac{a}{b-\chi}.$ Our goal is to prove that $\overline{u}=\underline{u}.$  Observe that this will imply that $\|u(t)-\overline{u}\|_{L^{\infty}}\rightarrow 0$ as $t\rightarrow\infty.$ Note that if $a=0,$ then $\underline{u}=\overline{u}=0.$ Hence we shall suppose that $a>0$ in this section.

By  comparison principle for elliptic equations, we have that
\begin{equation}\label{A03}
\inf_{x\in\R^N}u(\cdot,t)\leq v(x,t)\leq \sup_{x\in\R^N}u(\cdot,t)\quad \forall \ x\in\R^N\ ,\ \forall \ t\geq 0.
\end{equation}
Hence, it follows that $\sup_{x\in \R^{N}}v(x,t)<\infty.$
Using definition of limsup and liminf, for every $\varepsilon>0,$ there is $t_{\varepsilon}>0$ such that
\begin{equation}\label{A04}
\underline{u}-\varepsilon\leq u(x,t)\leq \overline{u}+\varepsilon\ \ \forall\ x\in\R^N, t\geq t_{\varepsilon}.
\end{equation}
Combining this with  \eqref{A03} we have
\begin{equation}\label{A06}
\underline{u}-\varepsilon\leq v(x,t)\leq \overline{u}+\varepsilon\ \ \forall\ x\in\R^N, t\geq t_{\varepsilon}.
\end{equation}

Let us define
\begin{equation*}
Lu=\Delta u-\chi\nabla v\nabla u.
\end{equation*}
Since $(u,v)$ solves \eqref{IntroEq1}, we have
\begin{equation}\label{A07}
u_{t}-Lu=-\chi uv+u(a-(b-\chi)u)
        = u\left[a-\chi v -(b-\chi)u \right].
\end{equation}
Note that $0\leq u$. By \eqref{A06}  and \eqref{A07}, for $t\geq t_{\varepsilon}$, we have
\begin{equation}\label{A08}
u_{t}-Lu \leq u\left[a-\chi(\underline{u}-\varepsilon) -(b-\chi)u \right]
\end{equation}
and
\begin{equation}\label{A09}
u_{t}-Lu \geq u\left[a-\chi(\overline{u}+\varepsilon) -(b-\chi)u \right].
\end{equation}
The following lemmas will be helpful in the proof of the main theorem of this section.
\begin{lem}\label{Asymp1Lem0}
Under the foregoing  assumptions, we have that
\begin{equation}\label{Eq_Asym001}
\inf_{x\in\R^N}u(x,t)> 0\ ,\ \ \forall\ t>0
\quad {\rm and}\quad
a-\chi\overline{u}>0.
\end{equation}
\end{lem}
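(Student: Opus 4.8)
The plan is to treat the two assertions separately, since they are of quite different nature: the estimate $a-\chi\overline u>0$ is essentially algebraic and follows at once from the comparison bound already recorded before \eqref{A01}, whereas the positivity $\inf_{\R^N}u(\cdot,t)>0$ requires a genuine lower-barrier (comparison) argument.

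For the bound $a-\chi\overline u>0$ I would simply combine the already-established inequality $0\le\underline u\le\overline u\le\frac{a}{b-\chi}$ with the standing hypothesis $b>2\chi$ (and $a>0$ in this subsection). Since $b>2\chi$ forces $b-\chi>\chi>0$, monotonicity of $s\mapsto a/s$ on $(0,\infty)$ gives
\[
a-\chi\overline u\ \ge\ a-\frac{\chi a}{b-\chi}\ =\ a\,\frac{b-2\chi}{b-\chi}\ >\ 0 ,
\]
so no further work is needed here.

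For the positivity I would construct a spatially homogeneous lower barrier. Fix $T>0$ and set $M_T=\sup_{\R^N\times[0,T]}u<\infty$; by \eqref{A03} one also has $0\le v\le M_T$ on this time slab, and by Lemma \ref{lemma2} the drift $\nabla v$ is bounded. Rewriting \eqref{A07} as the linear parabolic equation
\[
u_t-Lu=c(x,t)\,u,\qquad c(x,t)=a-\chi v-(b-\chi)u,
\]
the coefficient $c$ is bounded below on $[0,T]$, say $c\ge -\gamma$ with $\gamma:=bM_T$ (using $a\ge 0$, $0\le v\le M_T$, $0\le u\le M_T$, and $b-\chi>0$). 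Because $u\ge 0$, this yields $u_t-Lu+\gamma u\ge 0$. The function $\underline\phi(t):=\big(\inf_{\R^N}u_0\big)e^{-\gamma t}$ is constant in space, so $L\underline\phi=0$ and $\underline\phi_t-L\underline\phi+\gamma\underline\phi=0$, while $\underline\phi(0)=\inf_{\R^N}u_0\le u_0=u(\cdot,0)$. Comparing $u$ against $\underline\phi$ then gives
\[
u(x,t)\ \ge\ \big(\inf_{\R^N}u_0\big)e^{-\gamma t}\ >\ 0\qquad\forall\,x\in\R^N,\ t\in(0,T],
\]
and since $T>0$ was arbitrary this proves $\inf_{\R^N}u(\cdot,t)>0$ for every $t>0$.

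The one delicate point, which I expect to be the main obstacle, is the comparison step itself: it takes place on the unbounded domain $\R^N$, where the maximum principle can fail for merely bounded data without extra care. I would justify it by invoking a Phragm\'en--Lindel\"of-type comparison principle for bounded sub/supersolutions of linear parabolic equations: both $u$ and $\underline\phi$ are bounded, and the coefficients $\nabla v$ (bounded via Lemma \ref{lemma2}) and $\gamma$ are bounded, which is exactly what such a principle requires. Everything else in the argument is routine.
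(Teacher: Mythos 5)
Your proposal is correct, and its overall strategy is the same as the paper's: the inequality $a-\chi\overline u>0$ is read off from $\overline u\le\frac{a}{b-\chi}$ together with $b>2\chi$ and $a>0$ (the paper phrases this as a contradiction, you do the one-line direct computation --- same content), and the positivity of $\inf_{x}u(\cdot,t)$ comes from comparing $u$ with a spatially homogeneous lower barrier. The only genuine difference is the choice of barrier: the paper compares with the solution of the logistic ODE $U_t=-(b-\chi)U^2+(a-\chi v_\infty)U$, $U(0)=\inf u_0$, where $v_\infty=\sup_{x,t}v$, and invokes the analysis of that ODE (Lemma \ref{global-existence-lemma}), whereas you linearize, bound the reaction coefficient $a-\chi v-(b-\chi)u$ below by $-\gamma$ with $\gamma=bM_T$ on each finite slab $[0,T]$, and use the exponential barrier $(\inf u_0)e^{-\gamma t}$. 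Your version is slightly more elementary (no ODE analysis needed) at the cost of a time-slab-dependent decay rate, which is harmless since the lemma only asserts positivity at each fixed $t$; the paper's nonlinear barrier would in addition give a uniform-in-time lower bound when $a-\chi v_\infty>0$, but that extra strength is never used. A point in your favor is that you explicitly justify the comparison step on the unbounded domain (bounded sub/supersolutions, bounded drift via Lemma \ref{lemma2} and \eqref{A03}, Phragm\'en--Lindel\"of), which the paper invokes simply as ``the comparison principle for parabolic equations'' without comment.
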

\begin{proof}
Let $u(t, \inf_{x\in\R^N}u_{0})$ be the solution of the following ordinary differential equation,
\begin{equation*}
\begin{cases}
U_{t}=-(b-\chi)U^{2} +(a-\chi v_{\infty})U\\
U_{0}=\inf_{x\in\R^N}u_{0}.
\end{cases}
\end{equation*}
where $v_{\infty}:=\sup_{x\in\R^{N},t\geq 0}v(x,t)$. Since $b-\chi>0,$ then $u(t, \inf_{x\in\R^N}u_{0})$ is globally defined and bounded with $0< u(t, \inf_{x\in\R^N}u_{0})$ for all $t\geq 0.$ Note that if $ a-\chi v_{\infty}\leq 0,$ then $u(t, \inf_{x\in\R^N}u_{0})$ decreases to $0,$ while if $a-\chi v_{\infty} >0,$  by Lemma \ref{global-existence-lemma},  we have $u(t, \inf_{x\in\R^N}u_{0})\rightarrow \frac{a-\chi v_{\infty}}{b-\chi}.$ Since  $u_{0}\geq u(0, \inf_{x\in\R^N}u_{0}),$ by the comparison principle for parabolic equations,  we conclude that
$$
u(t, \inf_{x\in\R^N}u_{0})\leq u(x,t)
$$
for all $x\in\R^N$ and $t\ge 0$. Hence the first inequality in \eqref{Eq_Asym001} follows. On the other hand, if we suppose by contradiction that  $a-\chi\overline{u}\leq  0,$ then we would have that
\begin{eqnarray*}
\frac{a}{\chi} \leq \overline{u}\leq  \frac{a}{b-\chi},
\end{eqnarray*}
which contradicts the fact that $b>2\chi.$ Hence the second inequality in \eqref{Eq_Asym001} holds.
\end{proof}
Since $\underline{u}\leq \overline{u}, $ according to Lemma \ref{Asymp1Lem0}, we may suppose that $0<a-\chi(\overline{u}+\varepsilon) <a-\chi(\underline{u}-\varepsilon)$ for $\varepsilon$ very small.
\begin{lem}\label{Asymp1Lem1}
Under the forgoing assumptions , it holds that
\begin{equation}\label{A30}
(b-\chi)\overline{u}\leq a-\chi\underline{u}\quad {\rm and}\quad
a-\chi\overline{u} \leq (b-\chi)\underline{u}.
\end{equation}
\end{lem}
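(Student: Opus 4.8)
The plan is to sandwich $u$ between spatially homogeneous solutions of two logistic ODEs, exploiting that the operator $L=\Delta-\chi\nabla v\cdot\nabla$ annihilates constants: any solution of a logistic ODE, viewed as a function constant in $x$, solves the corresponding parabolic equation exactly. Fix $\varepsilon>0$ small enough that $0<a-\chi(\overline u+\varepsilon)<a-\chi(\underline u-\varepsilon)$, which is legitimate by Lemma \ref{Asymp1Lem0} (since $a-\chi\overline u>0$ and $\underline u\le\overline u$). I would then combine the differential inequalities \eqref{A08} and \eqref{A09}, valid for $t\ge t_\varepsilon$, with the parabolic comparison principle, pass to the limit $t\to\infty$ using Lemma \ref{global-existence-lemma}, and finally let $\varepsilon\to0$.

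For the first inequality, let $\overline U(t)$ solve $\dot{\overline U}=\overline U\,[a-\chi(\underline u-\varepsilon)-(b-\chi)\overline U]$ with $\overline U(t_\varepsilon)=\sup_{x}u(x,t_\varepsilon)$, a finite positive number. Being constant in $x$, it satisfies $\overline U_t-L\overline U=\overline U\,[a-\chi(\underline u-\varepsilon)-(b-\chi)\overline U]$, so by \eqref{A08} the function $u$ is a subsolution while $\overline U$ solves the same parabolic problem on $[t_\varepsilon,\infty)$, with $u(\cdot,t_\varepsilon)\le\overline U(t_\varepsilon)$; the comparison principle then gives $u(x,t)\le\overline U(t)$ for all $x$ and $t\ge t_\varepsilon$. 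Because $a-\chi(\underline u-\varepsilon)>0$ and $b-\chi>0$, Lemma \ref{global-existence-lemma} yields $\overline U(t)\to\frac{a-\chi(\underline u-\varepsilon)}{b-\chi}$, whence $\overline u\le\frac{a-\chi(\underline u-\varepsilon)}{b-\chi}$. Letting $\varepsilon\to0$ produces $(b-\chi)\overline u\le a-\chi\underline u$.

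The second inequality is symmetric. Let $\underline U(t)$ solve $\dot{\underline U}=\underline U\,[a-\chi(\overline u+\varepsilon)-(b-\chi)\underline U]$ with $\underline U(t_\varepsilon)=\inf_{x}u(x,t_\varepsilon)$, which is strictly positive by the first part of Lemma \ref{Asymp1Lem0}. By \eqref{A09}, $u$ is now a supersolution and $\underline U$ a solution with $u(\cdot,t_\varepsilon)\ge\underline U(t_\varepsilon)$, so comparison gives $u(x,t)\ge\underline U(t)$. Since $a-\chi(\overline u+\varepsilon)>0$, Lemma \ref{global-existence-lemma} gives $\underline U(t)\to\frac{a-\chi(\overline u+\varepsilon)}{b-\chi}>0$, hence $\underline u\ge\frac{a-\chi(\overline u+\varepsilon)}{b-\chi}$; letting $\varepsilon\to0$ yields $a-\chi\overline u\le(b-\chi)\underline u$.

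The main obstacle is the rigorous application of the comparison principle on the unbounded domain $\R^N$: one must invoke a maximum principle valid for bounded solutions, using that the drift $\nabla v$ is bounded (by \eqref{EqT} in Lemma \ref{lemma2}) and that $u$ is bounded, so that no condition at infinity is required and the spatially constant ODE solutions are genuine comparison functions. Beyond this, the only delicate bookkeeping is verifying that the logistic limits are strictly positive and that the ordering of the data at $t_\varepsilon$ is preserved, both of which follow from Lemma \ref{Asymp1Lem0} and the smallness of $\varepsilon$, together with the ordered passage $t\to\infty$ followed by $\varepsilon\to0$.
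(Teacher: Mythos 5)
Your proposal is correct and follows essentially the same route as the paper: comparison of $u$ with the spatially homogeneous logistic ODE solutions having rates $a-\chi(\underline u-\varepsilon)$ and $a-\chi(\overline u+\varepsilon)$, initial data $\sup_x u(x,t_\varepsilon)$ and $\inf_x u(x,t_\varepsilon)$ (positive by Lemma \ref{Asymp1Lem0}), convergence of the ODE solutions via Lemma \ref{global-existence-lemma}, and finally $\varepsilon\to 0$. Your added remark about justifying the comparison principle on $\R^N$ for bounded solutions with bounded drift is a sensible clarification of a point the paper leaves implicit.
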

\begin{proof}
Let $\overline{w}(t)$ denote the solution of the initial value problem
\begin{equation}\label{supSol}
\begin{cases}
\overline{w}_{t}=\overline{w}\left[a-\chi(\underline{u}-\varepsilon) -(b-\chi)\overline{w} \right] \ \forall t>t_{\varepsilon}\\
\overline{w}(t_{\varepsilon})=\sup_{x\in\R^N}u(x,t_{\varepsilon}).
\end{cases}
\end{equation}
By \eqref{A08}, \eqref{supSol}, and the comparison principle for parabolic equations, we obtain that
\begin{equation}\label{A10}
u(x,t)\leq \overline{w}(t) \quad \forall \ x\in\R^N,\ t\geq t_{\varepsilon}.
\end{equation}
According to Lemma \ref{Asymp1Lem0} we have that  $\sup_{x\in\R^N}u_{0}(x,t)>0$ for all $t>0.$ In particular we have that $\overline{w}(t_{\varepsilon})>0.$
On the other hand, by Lemma \ref{global-existence-lemma}
\begin{equation*}
\overline{w}(t) \to \frac{a-\chi(\underline{u}-\varepsilon)}{b-\chi} \ \ \text{as} \ \ t\rightarrow \infty.
\end{equation*}
Combining this with inequality \eqref{A10}, we obtain that
\begin{equation*}
\overline{u}\leq \frac{a-\chi(\underline{u}-\varepsilon)}{b-\chi}\ \ \ \forall\ \varepsilon>0.
\end{equation*}
By letting $\varepsilon\rightarrow 0,$ we obtain the first inequality in  \eqref{A30}.

Similarly, let $\underline{w}(t)$ be solution of
\begin{equation}\label{subSol}
\begin{cases}
\underline{w}_{t}=\underline{w}\left[a-\chi(\overline{u}+\varepsilon) -(b-\chi)\underline{w} \right] \ \forall t>t_{\varepsilon}\\
\underline{w}(t_{\varepsilon})=\inf_{x\in\R^N}u(x,t_{\varepsilon}).
\end{cases}
\end{equation}
By \eqref{A08}, \eqref{subSol}, and the  comparison principle for parabolic equations, we have
\begin{equation*}
u(x,t)\geq \underline{w}(t) \quad \forall \ x\in\R^N,\ t\geq t_{\varepsilon}.
\end{equation*}
Same arguments as in above yield that $\underline{w}(t_{\varepsilon})>0$, and

\begin{equation*}
\underline{w}(t)\rightarrow \frac{a-\chi(\overline{u}+\varepsilon)}{b-\chi} \ \ \text{as} \ \ t\rightarrow \infty.
\end{equation*}
Combining this with inequality \eqref{A04}, we obtain that
\[
\underline{u}\geq  \frac{a-\chi(\overline{u}+\varepsilon)}{b-\chi}\ \ \ \forall\ \varepsilon>0.
\]
By letting $\varepsilon\rightarrow 0,$ we obtain  the second inequality in \eqref{A30}.
 Lemma \ref{Asymp1Lem1} is thus proved.
\end{proof}

From these Lemmas, we can easily present the proof of Theorem \ref{Main Theorem 3}.

\begin{proof}[Proof of Theorem \ref{Main Theorem 3}]

It follows from Lemma \ref{Asymp1Lem1} that
\begin{align*}
(b-2\chi)\overline{u}&= (b-\chi)\overline{u}-\chi\overline{u}=(b-\chi)\overline{u} +\left(a -\chi\overline{u}\right) -a\nonumber\\
 &\leq  a -\chi\underline{u} +(b-\chi)\underline{u}  -a= (b-2\chi)\underline{u}.
\end{align*}
Combining this with the fact that $\underline{u}\leq \overline{u}$ and $ (b-2\chi)>0$ we obtain that
\begin{equation}\label{A11}
\underline{u}= \overline{u}.
\end{equation}
Equality \eqref{A11} combining with Lemma \ref{Asymp1Lem1} imply that
$$ (b-\chi)\overline{u}=a-\chi \overline{u}. $$ Solving for $ \overline{u} $ in the last equality, we obtain that $ \underline{u}= \overline{u}=\frac{a}{b}.$
The conclusion of the theorem follow ready from the last equality and   \eqref{A01}, and \eqref{A03} .
\end{proof}

When the initial data $u_{0}$ is not bounded away from zero. The uniform convergence on $\R^N$ of $u(x,t)$ as $t\to\infty$ to the constant steady solution $\frac{a}{b}$ does not hold. However we have a uniform local convergence of $u(x,t)$ as $t\to\infty$  to the steady solution under an additional hypothesis. We establish these in the last subsection.

\subsection{Asymptotic behavior of solutions with  non-negative initial data}

Throughout this section we suppose that $u_{0}\in C_{\rm unif}^b(\R^N)$ is nonnegative and not identically zero with $ 2\chi<b$. We shall also denote by $(u(x,t), v(x,t))$, the global bounded classical solution of \eqref{IntroEq1} associated  with initial data $u_{0}.$

In order to study the asymptotic behavior of $u(\cdot,t)$,  we first need to get some estimate on $\|\nabla v(x,t)\|.$
Since $\Delta v=v-u$ and $\|v(\cdot, t)\|_{\infty}\leq \|u(\cdot,t)\|_{\infty}$ for every $t> 0,$  it follows from Lemma \ref{lemma2} that
\begin{equation}\label{A_EE001}
\| \nabla v(x,t)\|\leq \sqrt{N}\|u(\cdot, t)\|_{\infty}  \ \ \forall \ x\in \mathbb{R}^{N}, \  t>0.
\end{equation}

Let us define for $U\in C^{2,1}(\R^N\times\R)$
\begin{equation}\label{Definition of L}
LU:=\partial_{t}U - \Delta U -\chi\nabla v\nabla U.
\end{equation}
 We have that
 \begin{equation}
 Lu=\underbrace{u(a-\chi v - (b-\chi)u)}_{F^{1}(x,t,u)},\ \ \ x\in \mathbb{R}^{N},\ \ t>0.
 \end{equation}
 Hence, since $v\geq 0,$ it follows that
\begin{equation}\label{F2}
Lu\leq \underbrace{u(a-(b-\chi)u)}_{F^{2}(u)}.
\end{equation}
 By  the comparison principle for parabolic equations, we have that
\begin{equation}\label{A_Eq1}
u(x,t)\leq U(t, \|u_{0}\|_{\infty})\ \ \forall\ x\in\mathbb{R}^{N},\ \ t\geq 0,
\end{equation}
where $U(t,\|u_{0}\|_{\infty})$ is the solution of the ODE
\begin{equation}\label{U infty}
\begin{cases}
LU=F^{2}(U)\\
U(0)=\|u_{0}\|_{\infty}.
\end{cases}
\end{equation}
By Lemma \ref{global-existence-lemma}, $U(t,\|u_{0}\|_{\infty})\to \frac{a}{b-\chi}$ as $t\rightarrow \infty.$

\medskip

Next, we prove some lemmas.

\begin{lem}
\label{aux-asymptotic-lemma1}
Assume that
$0<\chi<\frac{2 b}{3+\sqrt{1+Na}}.$
Then
\begin{equation}\label{A_Eq02}
\lim_{R\rightarrow \infty }\inf_{t>R, |x|>R}\left(  4(a-\chi v(x,t))-\chi^{2}\|\nabla v(x,t)\|^{2} \right) >0.
\end{equation}
\end{lem}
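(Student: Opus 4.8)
The plan is to bound $v$ and $\nabla v$ uniformly in $x$ by the logistic supersolution $U(t,\|u_0\|_\infty)$ from \eqref{U infty} and then pass to the large‑time limit, so that the spatial restriction $|x|>R$ will in fact play no role. First I would recall from \eqref{A_Eq1} that $0\le u(x,t)\le U(t,\|u_0\|_\infty)$ for all $x$ and $t\ge 0$, whence $\|u(\cdot,t)\|_\infty\le U(t,\|u_0\|_\infty)$. Since $v\ge 0$ and $\|v(\cdot,t)\|_\infty\le\|u(\cdot,t)\|_\infty$, this gives $0\le v(x,t)\le U(t,\|u_0\|_\infty)$, while \eqref{A_EE001} yields $\|\nabla v(x,t)\|\le\sqrt N\,U(t,\|u_0\|_\infty)$. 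By Lemma \ref{global-existence-lemma} applied to \eqref{U infty} (a logistic ODE with $a_0=a>0$, $b_0=b-\chi>0$ and initial value $\|u_0\|_\infty>0$), we have $U(t,\|u_0\|_\infty)\to\frac{a}{b-\chi}$ as $t\to\infty$; here one must assume $a>0$, consistently with the rest of the section, since for $a=0$ the quantity is nonpositive and the conclusion fails.

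Consequently, for any $\varepsilon>0$ there is $T_\varepsilon$ with $U(t,\|u_0\|_\infty)\le\frac{a}{b-\chi}+\varepsilon$ for all $t\ge T_\varepsilon$, so that for every $x\in\R^N$ and $t\ge T_\varepsilon$ (using the upper bounds on $v$ and on $\|\nabla v\|$) one gets
$$
4\big(a-\chi v(x,t)\big)-\chi^2\|\nabla v(x,t)\|^2\ \ge\ 4a-4\chi\Big(\tfrac{a}{b-\chi}+\varepsilon\Big)-N\chi^2\Big(\tfrac{a}{b-\chi}+\varepsilon\Big)^2 .
$$
Letting $\varepsilon\to 0$, the right‑hand side tends to
$$
4a-\frac{4\chi a}{b-\chi}-\frac{N\chi^2a^2}{(b-\chi)^2}=\frac{a}{(b-\chi)^2}\Big[\,4(b-\chi)(b-2\chi)-Na\chi^2\,\Big].
$$

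The remaining and main point is to verify that the bracket is strictly positive under \eqref{A_Eqtm2}. For this I would use the algebraic identity $(2b-3\chi)^2=4(b-\chi)(b-2\chi)+\chi^2$. Rewriting \eqref{A_Eqtm2} as $\sqrt{1+Na}\,\chi<2b-3\chi$, and noting that both sides are positive (since $b>2\chi$ gives $2b-3\chi>\chi>0$), squaring yields $(1+Na)\chi^2<(2b-3\chi)^2=4(b-\chi)(b-2\chi)+\chi^2$, i.e. $Na\chi^2<4(b-\chi)(b-2\chi)$, which is precisely the positivity of the bracket. Hence there exist $\varepsilon>0$ small and $\delta>0$ so that the displayed lower bound is $\ge\delta$ for all $t\ge T_\varepsilon$ and all $x\in\R^N$. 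Choosing $R\ge T_\varepsilon$ then forces $\inf_{t>R,\,|x|>R}\big(4(a-\chi v)-\chi^2\|\nabla v\|^2\big)\ge\delta$, so the limit as $R\to\infty$ is at least $\delta>0$, which proves the lemma. The only nontrivial ingredient is the algebraic equivalence in this last step; everything else is a routine comparison‑principle estimate combined with the convergence of the logistic supersolution.
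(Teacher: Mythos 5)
Your proposal is correct and follows essentially the same route as the paper: bound $v$ and $\|\nabla v\|$ uniformly in $x$ by the logistic supersolution $U(t,\|u_0\|_\infty)\to\frac{a}{b-\chi}$, reduce the claim to the algebraic inequality $4\bigl(a-\tfrac{\chi a}{b-\chi}\bigr)-\tfrac{N\chi^2a^2}{(b-\chi)^2}>0$, and check that this is exactly the hypothesis on $\chi$. The only difference is cosmetic: the paper verifies the last inequality by setting $\mu=\chi/(b-\chi)$ and solving the quadratic $4(1-\mu)-Na\mu^2>0$, whereas you square $\sqrt{1+Na}\,\chi<2b-3\chi$ using the identity $(2b-3\chi)^2=4(b-\chi)(b-2\chi)+\chi^2$; both computations are equivalent, and your explicit $\varepsilon$--$T_\varepsilon$ passage to the limit just fills in what the paper leaves implicit.
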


\begin{proof}

From \eqref{A_EE001}, \eqref{A_Eq1},  and the fact that $U(t,\|u_{0}\|_{\infty})\rightarrow \frac{a}{b-\chi}$ as t goes to infinity, for \eqref{A_Eq02} to hold, it is enough to have
\begin{equation}\label{A_Eq4}
 4(a-\frac{\chi a}{b-\chi} )-\frac{N\chi^{2} a^2}{(b-\chi)^{2}}> 0.
\end{equation}
Let $\mu=\frac{\chi}{b-\chi}$. \eqref{A_Eq4} is equivalent to
$4(1-\mu)-N a\mu^2>0.$
This implies that
$$
0<\mu=\frac{\chi}{b-\chi}<\frac{2}{1+\sqrt{1+Na}}
$$
and then
$$
0<\chi<\frac{2 b}{3+\sqrt {1+Na}}.
$$
The lemma is thus proved.
 \end{proof}

\begin{lem}\label{MainLemma4} Let $u_{0}\in C_{\rm unif}^b(\R^N)$ be a nonnegative and non-zero function. Let $(u,v)$ be the classical bounded solution of \eqref{IntroEq1} associated with $u_{0}.$  If
\begin{equation}\label{A_Eqtm1}
\chi< \frac{2b}{3+\sqrt{1+Na}},
\end{equation}
then
\begin{equation}\label{A_Eqtm001}
\liminf_{t\rightarrow\infty}\inf_{|x|\leq ct}u(x,t)>0
\end{equation}
for every $0\leq c < c^{\ast}(\le 2\sqrt a)$, where
\begin{equation}\label{A_Eq6}
c^{\ast}=\lim_{R\rightarrow\infty}\inf_{|x|\geq R, t\geq R}(2\sqrt{a-\chi v(x,t)}-\chi\|\nabla v(x,t)\|).
\end{equation}
\end{lem}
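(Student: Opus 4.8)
The plan is to derive \eqref{A_Eqtm001} from the parabolic comparison principle by constructing, for each admissible speed, a single radially expanding front subsolution of the equation $Lu=u(a-\chi v-(b-\chi)u)$ (with $L$ as in \eqref{Definition of L}) whose inner plateau eventually engulfs the ball $\{|x|\le ct\}$.

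First I would record two consequences of \eqref{A_Eqtm1}. On the one hand $c^*>0$: by Lemma \ref{aux-asymptotic-lemma1} there are $\delta_0>0$ and $R_1>0$ with $4(a-\chi v)-\chi^2\|\nabla v\|^2\ge\delta_0$ on $\{|x|\ge R_1,\ t\ge R_1\}$, so writing $A=2\sqrt{a-\chi v}\in[0,2\sqrt a]$ and $B=\chi\|\nabla v\|\ge0$ the identity $A-B=(A^2-B^2)/(A+B)$ gives $A-B\ge\delta_0/(4\sqrt a)$ there, whence $c^*\ge\delta_0/(4\sqrt a)>0$ and the range of $c$ in \eqref{A_Eqtm001} is nonempty. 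On the other hand $b>2\chi$ (since $3+\sqrt{1+Na}\ge4$), and because $\|v(\cdot,t)\|_\infty\le\|u(\cdot,t)\|_\infty\le U(t,\|u_0\|_\infty)\to a/(b-\chi)$ by \eqref{A_Eq1} and Lemma \ref{global-existence-lemma}, there are $\sigma_0>0$ and $t_0\ge R_1$ with $a-\chi v(x,t)\ge\sigma_0$ for all $x\in\R^N$ and $t\ge t_0$; moreover $\|\nabla v(\cdot,t)\|$ is uniformly bounded by \eqref{A_EE001}.

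Now fix $0\le c<c^*$ and pick $\tilde c,\tilde c'$ with $c<\tilde c<\tilde c'<c^*$; by \eqref{A_Eq6} there is $R_0\ge t_0$ with $2\sqrt{a-\chi v}-\chi\|\nabla v\|\ge\tilde c'$ on $\{|x|\ge R_0,\ t\ge R_0\}$. I would look for
\begin{equation*}
\underline u(x,t)=\phi\big(|x|-\rho_0-\tilde c(t-R_0)\big),
\end{equation*}
where $\phi\ge0$ is a front profile equal to a small constant $\gamma_0$ for $s\le-s_0$, decaying like a truncated exponential on $[-s_0,0]$, and vanishing for $s\ge0$, with the layer width $s_0$ fixed and the initial radius $\rho_0\ge R_0+s_0$; thus at $t=R_0$ the function $\underline u$ is the plateau $\gamma_0$ on $B_{\rho_0-s_0}$ and is supported in $\overline{B_{\rho_0}}$. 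Using $\partial_t\underline u=-\tilde c\phi'$, $\Delta\underline u=\phi''+\tfrac{N-1}{r}\phi'$ and $-\chi\nabla v\cdot\nabla\underline u=-\chi(\nabla v\cdot e)\phi'$ (with $e=x/|x|$, $r=|x|$), the subsolution inequality $L\underline u\le\underline u(a-\chi v-(b-\chi)\underline u)$ splits into two regimes. In the plateau ($\phi\equiv\gamma_0$) it reduces to $a-\chi v-(b-\chi)\gamma_0\ge0$, valid for all $t\ge t_0$ once $\gamma_0\le\sigma_0/(b-\chi)$. In the transition layer, which for $t\ge R_0$ lies in $\{|x|\ge\rho_0-s_0\}\subset\{|x|\ge R_0\}$, it reduces to
\begin{equation*}
\phi''+\hat c\,\phi'+(a-\chi v)\phi-(b-\chi)\phi^2\ge0,\qquad \hat c\le\tilde c+\chi\|\nabla v\|+\tfrac{N-1}{r},
\end{equation*}
where I bounded $|\nabla v\cdot e|\le\|\nabla v\|$. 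Enlarging $\rho_0$ if necessary so that $\tfrac{N-1}{\rho_0-s_0}<\tilde c'-\tilde c$, and using $2\sqrt{a-\chi v}-\chi\|\nabla v\|\ge\tilde c'$ on the layer, we obtain $\hat c<2\sqrt{a-\chi v}$ there; the linear part is thus of sub-critical speed, a truncated exponential $\phi$ satisfies the inequality, and the damping $-(b-\chi)\phi^2=O(\gamma_0^2)$ is absorbed by taking $\gamma_0$ small. Finally set $\gamma_0=\min\{\sigma_0/(b-\chi),\ \tfrac12\min_{\overline{B_{\rho_0}}}u(\cdot,R_0)\}>0$ (positive by the strong maximum principle, as $u_0\not\equiv0$); then $\underline u(\cdot,R_0)\le u(\cdot,R_0)$, so the comparison principle gives $u\ge\underline u$ for $t\ge R_0$. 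Since $\underline u=\gamma_0$ on $\{|x|\le\rho_0-s_0+\tilde c(t-R_0)\}$ and $c<\tilde c$, the ball $\{|x|\le ct\}$ lies in this plateau for all large $t$, whence $\inf_{|x|\le ct}u(x,t)\ge\gamma_0>0$, which is \eqref{A_Eqtm001}.

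The main obstacle is the verification that the radially symmetric $\underline u$ is a genuine subsolution even though the drift $-\chi\nabla v$ and the net growth $a-\chi v$ are space--time dependent and $v$ is not radial: one must freeze these coefficients across the thin moving layer and control the resulting errors, the curvature term $\tfrac{N-1}{r}$, and the quadratic damping, all uniformly in $t$ and in the direction $e$, by means of the strict speed gap $c^*-\tilde c>0$. It is precisely here that the two consequences of \eqref{A_Eqtm1} enter: the uniform positivity $a-\chi v\ge\sigma_0$ coming from $b>2\chi$, which sustains the plateau, and the speed gap furnished by Lemma \ref{aux-asymptotic-lemma1}, which renders the transition-layer inequality sub-critical.
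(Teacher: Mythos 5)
Your route is genuinely different from the paper's. The paper shifts time by $T_{\varepsilon}$, checks that the generalized principal eigenvalue $\lambda_1'$ of the linearized operator $\mathcal{L}$ is negative by testing with the constant function $w\equiv 1$, and then quotes the spreading theorem for heterogeneous KPP equations (Theorem 1.5 of \cite{Berestycki1}) to obtain \eqref{A_Eqtm001} for all $c<c^{\ast}_{\varepsilon}$, finally letting $\varepsilon\to 0$ to identify the speed \eqref{A_Eq6}. You instead attempt a self-contained comparison argument with a single expanding radial plateau-front subsolution. Your preparatory steps (Lemma \ref{aux-asymptotic-lemma1}, the eventual uniform bounds on $v$ and $\nabla v$, positivity of $u(\cdot,R_0)$ on compact sets via the strong maximum principle, and the shift $ct\le \tilde c(t-T)$) are sound and mirror ingredients of the paper; the problem is the core construction, which is exactly the part you leave as a sketch.

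As written, $\underline u$ is not a subsolution. At the plateau junction $s=-s_0$ the slope jumps from $0$ to a strictly negative value; this is a concave kink (locally your profile is a minimum, not a maximum, of two subsolutions), so the distributional Laplacian of $\underline u$ carries a negative surface measure on that sphere and the inequality $L\underline u\le \underline u(a-\chi v-(b-\chi)\underline u)$ fails there; a valid profile must be $C^1$ at this junction. At the leading edge, a decaying exponential never vanishes, so ``vanishing for $s\ge 0$'' forces a jump discontinuity rather than the transversal zero crossing needed to extend by $0$; the usual remedy is a damped-cosine (subcritical) profile, but that solves a constant-coefficient equation. And this is where the substantive gap lies: to run a fixed constant-coefficient profile across the layer you need uniform constants $\hat c_0\ge \sup\big(\tilde c+\chi\|\nabla v\|+\tfrac{N-1}{r}\big)$ and $\mu_0\le\inf\,(a-\chi v)$, the supremum and infimum taken over the moving annulus for all large times — effectively over all of $\R^N\times[R_0,\infty)$ — with $\hat c_0<2\sqrt{\mu_0}$. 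The pointwise gap $2\sqrt{a-\chi v}-\chi\|\nabla v\|\ge \tilde c'$ furnished by \eqref{A_Eq6} does not imply such uniform constants (regions where $a-\chi v$ is large may also carry large $\chi\|\nabla v\|$), so the frozen-coefficient argument only yields \eqref{A_Eqtm001} for $c$ below a cruder uniform threshold, not for every $c<c^{\ast}$ as the lemma asserts. (The interior of the layer is in fact harmless for a pure exponential, since $\lambda^2-\hat c\lambda+\mu\ge(\lambda-\sqrt{\mu})^2+\eta\lambda$ whenever $2\sqrt{\mu}-\hat c\ge\eta$; but that identity does not repair the two junctions, and attempts to assemble exponential pieces with the required convex kinks do not close in an obvious way.) Overcoming precisely this heterogeneity at the leading edge is the content of the theorem of \cite{Berestycki1} that the paper invokes, whose proof relies on principal eigenfunctions of linearized Dirichlet problems rather than one explicit radial profile; a self-contained proof along your lines would have to reproduce an argument of that type.
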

\begin{proof} First, we know that $\|v(\cdot,t)\|_{\infty}\leq \|u(\cdot,t)\|_{\infty}\leq U(t,\|u_{0}\|_{\infty})$ for all $t$ with $ U(t,\|u_{0}\|_{\infty})\rightarrow \frac{a}{b-\chi}$ as $t\rightarrow \infty.$ Hence  for every $\varepsilon>0$, there is $T_{\varepsilon}>0$ such that
\begin{equation}\label{Eq00100}
\|v(\cdot,t)\|_{\infty}\leq \frac{a}{b-\chi}+ \varepsilon,  \quad \forall\ t\geq T_{\varepsilon}
\end{equation}
and $T_\varepsilon\to\infty$ as $\epsilon\to 0$.
Define $\tilde{u}(x,t)=u(x,T_{\varepsilon}+t)$ and $\tilde{v}(x,t)=v(x,T_{\varepsilon}+t)$ for all $t\geq 0$ and $x\in\R^N.$
Then $\tilde u(x,t)$ satisfies
$$
\tilde u_t=\Delta\tilde u-\chi\nabla \tilde v\nabla \tilde u+\tilde F(t,x,\tilde u),
$$
where $\tilde F(t,x,u)=\tilde u(a-\chi \tilde v-(b-\chi)\tilde u)$. Note  that
$$
4\partial_{u}\tilde F^{1}(x,t,0)-\| \chi\nabla \tilde v(x,t)\|^{2}=4(a-\chi \tilde v(x,t))-\chi^{2}\|\nabla v(x,t)\|^{2}.
$$
By Lemma \ref{aux-asymptotic-lemma1},
$$
\lim_{R\rightarrow \infty }\inf_{t>R, |x|>R}\left( 4\partial_{u}\tilde F^{1}(x,t,0)-\| \chi\nabla \tilde v(x,t)\|^{2}\right )>0.
$$

Next, we introduce the linear operator
\begin{equation*}
\mathcal{L}w:=\partial_{t}w -\Delta w+q(x,t)\cdot \nabla w -p(x,t)w
\end{equation*} for every $w\in C^{2,1}(\R^N\times\R)$, where
 $$
 q(x,t)=\begin{cases} \chi \nabla \tilde v(x,t),\quad t\ge 0\cr
 \chi \nabla\tilde v(x,0),\quad t<0
 \end{cases}
 \quad {\rm and}\quad
 p(x,t)=\begin{cases}
 a-\chi\tilde v(x,t),\quad t\ge 0\cr
 a-\chi\tilde v(x,0),\quad t<0.
 \end{cases}
 $$
 Following \cite{Berestycki1}, the generalized principal eigenvalue associated to the operator $\mathcal{L}$ is defined to be
\begin{equation*}
\lambda_{1}':=\inf\{ \lambda\in\R^N\ : \ \exists \phi\in C^{2,1}\cap W^{1,\infty}(\R^{N}\times\R) , \inf_{(x,t)}\phi>0, \mathcal{L}\phi\leq \lambda\phi \}.
\end{equation*}
We  show that ${\lambda}_{1}'<0$ for small values of $\varepsilon$. Indeed, for $w(x,t)=1$, the constant function, using definition of $\tilde{v}$ and inequality \eqref{Eq00100}, we obtain that
\begin{align*}
\mathcal{L}(w)&=\begin{cases} -a +\chi\tilde{v}(x,t),\quad t\ge 0\cr
-a+\chi \tilde v(x,0),\quad  t<0
\end{cases}\\
&\leq -a + \frac{\chi  a}{b-\chi}+\chi \varepsilon=\chi \varepsilon- \frac{a(b-2\chi)}{b-\chi}.
\end{align*}
Hence ${\lambda}_{1}'<0 $ whenever $\varepsilon< \frac{a(b-2\chi)}{\chi(b-\chi)}$.

Now, by  Theorem 1.5  in  \cite{Berestycki1}, it holds that
\begin{equation}\label{A_Eqtm01000}
\liminf_{t\rightarrow\infty}\inf_{|x|\leq ct}\tilde{u}(x,t)>0
\end{equation}
for every $0\leq c < c^{\ast}_{\varepsilon}$ where
\[
c^{\ast}_{\varepsilon}=\liminf_{|x|\rightarrow\infty}\inf_{t\geq T_{\varepsilon}}(2\sqrt{a-\chi v(x,t)}-\chi\|\nabla v(x,t)\|).
\]
By the definition of $\tilde{u}$ and \eqref{A_Eqtm01000},  we deduce that
\begin{equation}\label{A_Eqtm01001}
\liminf_{t\rightarrow\infty}\inf_{|x|\leq ct}u(x,t+T_{\varepsilon})>0\quad \forall\,\, 0\leq c < c^{\ast}_{\varepsilon}.
\end{equation}

We claim that $\lim_{\varepsilon\to 0}c^*_{\varepsilon}=c^*$. In fact, recall that
$$
c^{\ast}=\lim_{R\to \infty}\inf_{|x|\geq R, t\geq R}\underbrace{\left( 2\sqrt{a-\chi v(x,t)}-\chi\|\nabla v(x,t)\| \right)}_{f(x,t)}.
$$
Using the fact that
$$
\inf_{|x|\geq R}\inf_{t\geq T_{\varepsilon}}f(x,t)=\inf_{|x|\geq R, t\geq T_{\varepsilon}}f(x,t)\leq \inf_{|x|\geq R, t\geq R}f(x,t), \ \ \forall\ R\geq T_{\varepsilon},
$$
we have
\begin{equation}
\label{c-star-eq1}
c^{\ast}_{\varepsilon} =  \liminf_{|x|\to \infty}\inf_{t\geq T_{\varepsilon}}f(x,t)=\lim_{R\to \infty}\inf_{|x|\geq R}\inf_{t\geq T_{\varepsilon}}f(x,t)\leq  \lim_{R\to \infty}\inf_{|x|\geq R, t\geq R}f(x,t)= c^{\ast}.
\end{equation}
Using the fact that for given $\delta >0$, there is $R_{\delta}>0$ such that
$$
c^{\ast}-\delta< f(x,t)\ \ \ \forall\ |x|,t\geq R_{\delta}
$$
and that there is $\varepsilon_{0}$ such
$$ T_{\varepsilon}\geq R_{\delta} \ \ \ \forall \ \varepsilon< \varepsilon_{0},$$
we have
$$
c^{\ast}-\delta\leq \inf_{|x|\geq R, \ t\geq T_{\varepsilon}}f(x,t)\quad \forall \ R\geq R_{\delta}, \ \forall \ \varepsilon<\varepsilon_{0}.
$$
 Thus, for every $0<\varepsilon<\varepsilon_0$, we have that  
\begin{equation}
\label{c-star-eq2}
c^{\ast}-\delta \leq \lim_{R\to\infty}\inf_{|x|\geq R,t\geq T_{\varepsilon}}f(x,t)= c^{\ast}_{\varepsilon}. 
\end{equation}

By \eqref{c-star-eq1} and \eqref{c-star-eq2}, we obtain
$$
\lim_{\varepsilon\to 0}c^{\ast}_{\varepsilon}=c^{\ast}.
$$

Finally, let $0\leq c < c^{\ast}$ be fixed. There is some $\varepsilon>0$ small enough such that $c< c^{\ast}_{\varepsilon}$. Choose $\tilde{c}\in ( c ,\ c^{\ast}_{\varepsilon})$. Observe that
\begin{equation}\label{212121}
 ct= \tilde{c}(t-T_{\varepsilon})-(\tilde{c}-c)(t-\frac{\tilde{c}T_{\varepsilon}}{\tilde{c}-c})\leq \tilde{c}(t-T_{\varepsilon})
\end{equation}
whenever $ t\geq \frac{\tilde{c}T_{\varepsilon}}{\tilde{c}-c}. $
Hence, since $u(x,t)=u(x,t-T_{\varepsilon} +T_{\varepsilon}),$ we obtain that
$$
\inf_{\|x\|\leq ct}u(x,t)\geq \inf_{\|x\|\leq \tilde{c}t}u(x,t+T), \ \ \forall \ \ t\geq \frac{\tilde{c}T}{\tilde{c}-c}.
$$
Combining the last inequality with inequality \eqref{212121}, we conclude that inequality \eqref{A_Eqtm001} hold.
\end{proof}

The next step to the proof of Theorem \ref{Main Theorem 4}  is the following result. This result asserts that, under some conditions,  the asymptotic behavior of the function $v(x,t)$ is quit similar to the one of the function $u(x,t).$

\begin{lem}\label{Asym of v}
\begin{description}
\item[(i)] If there is a positive constant $c^{\ast}_{\rm low}$ such that
\begin{equation}\label{T00}
\lim_{t\rightarrow\infty}\sup_{|x|\leq ct}|u(x,t)-\frac{a}{b}|=0\quad \forall\,\, 0\leq c < c^{\ast}_{\rm low},
\end{equation}
 then
\begin{equation*}
\lim_{t\rightarrow\infty}\sup_{|x|\leq ct}|v(x,t)-\frac{a}{b}|=0\quad \forall\,\, 0\leq c < c^{\ast}_{\rm low}.
\end{equation*}

\item[(ii)]If there is a positive constant $c^{\ast}_{\rm up}$ such that
\begin{equation}\label{T01}
\lim_{t\rightarrow\infty}\sup_{|x|\geq ct}u(x,t)=0\quad \forall\,\, c > c^{\ast}_{\rm up},
\end{equation}
then
\begin{equation*}
\lim_{t\rightarrow\infty}\sup_{|x|\geq ct}v(x,t)=0\quad \forall\,\, c > c^{\ast}_{\rm up}.
\end{equation*}
\end{description}
\end{lem}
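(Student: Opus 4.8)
The plan is to exploit the fact that $v$ is obtained from $u$ by a fixed convolution. By the solution identity $v(\cdot,t)=(I-\Delta)^{-1}u(\cdot,t)$ together with the representation \eqref{Eq_Inv}, we have $v(\cdot,t)=\Phi* u(\cdot,t)$, where
\[
\Phi(x)=\int_{0}^{\infty}\frac{e^{-s}}{(4\pi s)^{N/2}}e^{-\frac{|x|^{2}}{4s}}\,ds .
\]
First I would record the three properties of $\Phi$ that drive everything: $\Phi\ge 0$ pointwise; $\Phi\in L^{1}(\R^N)$ with $\int_{\R^N}\Phi(x)\,dx=1$ (integrate in $x$ first, using that each Gaussian has unit mass, then $\int_0^\infty e^{-s}\,ds=1$); and, as a consequence of integrability, the tail bound $\int_{|z|\ge \rho}\Phi(z)\,dz\to 0$ as $\rho\to\infty$. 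I would also fix $M:=\sup_{t\ge 0}\|u(\cdot,t)\|_{\infty}+\tfrac{a}{b}<\infty$, which is finite because $u(\cdot,t)\le U(t,\|u_0\|_\infty)$ is uniformly bounded by \eqref{A_Eq1}.

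For part (i), fix $0\le c<c_{\rm low}^{*}$ and choose an intermediate speed $c'$ with $c<c'<c_{\rm low}^{*}$. Since $\int\Phi=1$, for $|x|\le ct$ I write $v(x,t)-\tfrac{a}{b}=\int_{\R^N}\Phi(x-y)\big(u(y,t)-\tfrac{a}{b}\big)\,dy$ and split the $y$-integral over $\{|y|\le c't\}$ and its complement. On the inner region the integrand is controlled by $\sup_{|y|\le c't}|u(y,t)-\tfrac{a}{b}|$, which tends to $0$ by hypothesis \eqref{T00} (as $c'<c_{\rm low}^{*}$), and $\int\Phi=1$ keeps the total contribution of order that supremum. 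On the outer region I bound $|u(y,t)-\tfrac{a}{b}|\le M$; since $|x|\le ct$ and $|y|>c't$ force $|x-y|>(c'-c)t$, the mass of $\Phi$ there is at most $\int_{|z|>(c'-c)t}\Phi(z)\,dz\to 0$. Both estimates are uniform in $x$ with $|x|\le ct$, giving the desired conclusion.

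Part (ii) is the same argument with the geometry reversed. Fix $c>c_{\rm up}^{*}$ and pick $c'$ with $c_{\rm up}^{*}<c'<c$. Because $u\ge 0$ we have $v=\Phi*u\ge 0$, and for $|x|\ge ct$ I split $v(x,t)=\int\Phi(x-y)u(y,t)\,dy$ over $\{|y|\ge c't\}$, where $u\le\sup_{|y|\ge c't}u(\cdot,t)\to 0$ by \eqref{T01}, and over $\{|y|<c't\}$, where $u\le M$ and $|x-y|>(c-c')t$ forces the $\Phi$-mass to vanish via the same tail bound. Neither half of the analysis is technically hard; the only things that require care are the verification that $\int\Phi=1$ (so that the constant state $\tfrac{a}{b}$ is reproduced exactly, and no spurious limit appears) and the clean bookkeeping of the intermediate speed $c'$ that separates the "already-converged" inner region from the exponentially light tail of $\Phi$. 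This concentration-of-the-kernel mechanism is really the crux, and it works precisely because the spatial regions in the hypotheses expand linearly in $t$ while $\Phi$ is a fixed integrable kernel.

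\begin{proof}[Proof of Lemma \ref{Asym of v}]
[The argument above is to be carried out in full detail.]
\end{proof}
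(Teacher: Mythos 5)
Your proposal is correct and is essentially the paper's own argument: the paper likewise writes $v(\cdot,t)=(I-\Delta)^{-1}u(\cdot,t)$ via the kernel $\int_0^\infty e^{-s}G(\cdot,s)\,ds$ of unit mass and splits the integral with an intermediate speed, bounding the near region by the hypothesis on $u$ and the far region by the negligible kernel mass there. The only cosmetic difference is bookkeeping: the paper changes variables to $z=(x-y)/(2\sqrt{s})$ and truncates $(s,z)$ at a fixed radius $R=R(\varepsilon)$, while you keep the physical-space convolution with $\Phi$ and use its tail beyond radius $(c'-c)t$; both are the same kernel-concentration mechanism and both are valid.
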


\begin{proof}
We first recall that
\begin{equation}\label{T1}
v(x,t)=\int_{0}^{\infty}\int_{\R^N}\frac{e^{-s}}{(4\pi s)^{\frac{N}{2}}}e^{-\frac{|x-y|^{2}}{4s}}u(y,t)dyds=\frac{1}{\pi^{\frac{N}{2}}}\int_{0}^{\infty}\int_{\R^N}e^{-s}e^{-|z|^{2}}u(x-2\sqrt{s}z,t)dyds.
\end{equation}
Let $\varepsilon>0.$ be fixed.
Since
$$
\frac{1}{\pi^{\frac{N}{2}}}\int_{0}^{\infty}\int_{\R^N}e^{-s}e^{-|z|^{2}}dyds=\underbrace{\left[\int_{0}^{\infty}e^{-s}ds\right]}_{=1}\underbrace{\left[\frac{1}{\pi^{\frac{N}{2}}}\int_{\R^N}e^{-|z|^{2}}dy\right]}_{=1} =1
$$
and
$$
 \sup_{t\geq 0}\|u(\cdot,t)+1\|_{\infty}<\infty,
$$ there is $R>0$ large enough such that
\begin{equation}\label{T2}
\frac{1}{\pi^{\frac{N}{2}}}\int\int_{\{s \geq R\}\cup\{ |z|\geq R\}}e^{-s}e^{-|z|^{2}}|u(x-2\sqrt{s}z,t)+1|dyds<\frac{\varepsilon}{2}
\end{equation}
for all $x\in\R^N,\ t\geq 0.$
\medskip

\textbf{(i)}  Let $0\leq c<c^{\ast}_{\rm low}$ be fixed. Choose $c<\tilde{c}<c^{\ast}_{\rm low}.$ From \eqref{T00}, there is $t_{0}>0$ such that
\begin{equation}\label{i0}
|u(x,t)-\frac{a}{b}|\leq \frac{\varepsilon}{2}
\end{equation}
for every $|x|\leq \tilde{c}t,\ t\geq t_{0}.$  Using \eqref{T2},  for every $x\in\R^N$ and $t>0,$ we have
\begin{eqnarray}\label{i1}
|v(x,t)-\frac{a}{b}| & \leq & \frac{1}{\pi^{\frac{N}{2}}}\int_{0}^{\infty}\int_{\R^N}e^{-s}e^{-|z|^{2}}|u(x-2\sqrt{s}z,t)-\frac{a}{b}|dyds \nonumber \\
 &\leq & \frac{1}{\pi^{\frac{N}{2}}}\int\int_{\{s\leq R \ \text{and} \ |z|\leq R\}}e^{-s}e^{-|z|^{2}}|u(x-2\sqrt{s}z,t)-\frac{a}{b}|dyds \nonumber \\
 & & +\frac{1}{\pi^{\frac{N}{2}}}\int\int_{\{s > R\}\cup \{|z|> R\}}e^{-s}e^{-|z|^{2}}|u(x-2\sqrt{s}z,t)+\frac{a}{b}|dyds \nonumber \\
 &\leq & \frac{1}{\pi^{\frac{N}{2}}}\int\int_{\{s\leq R \ \text{and} \ |z|\leq R\}}e^{-s}e^{-|z|^{2}}|u(x-2\sqrt{s}z,t)-\frac{a}{b}|dyds +\frac{\varepsilon}{2}.
\end{eqnarray}
On the other hand we have that
\begin{equation}\label{i2}
|x-2\sqrt{s}z| \leq  |x|+ 2\sqrt{s}|z|\leq  ct +2R^{2} =  \tilde{c}t -(\tilde{c}-c)(t-\frac{2R^{2}}{\tilde{c}-c} ) \leq  \tilde{c}t
\end{equation}
whenever $|x|\leq ct$,  $s\leq R$, $|z|\leq R$,  and $t\geq \frac{2R^{2}}{\tilde{c}-c} .$ Hence combining inequalities \eqref{i0}, \eqref{i1} and \eqref{i2}, we obtain that
\[
\sup_{|x|\leq ct}|v(x,t)-\frac{a}{b}|\leq \varepsilon
\]
whenever $t\geq \max\{t_{0}, \ \frac{2R^{2}}{\tilde{c}-c}\}.$ This complete the proof of $(i).$

\medskip

\textbf{(ii)}
Let $c>c^{\ast}_{\rm up}$ be fixed. Choose $c^{\ast}_{\rm up}<\tilde{c}<c.$  Since $\tilde{c}>c^{\ast}_{\rm up},$ according to \eqref{T01}, there is $t_{1}>0$ such that
\begin{equation}\label{T3}
\sup_{|y|\geq \tilde{c}t}u(y,t)<\frac{\varepsilon}{2} ,\quad \forall\ t\geq t_{1}.
\end{equation}
Using Triangle inequality,  for every $(z,s)\in B(0,R)\times [0, R]$  and $|x|\geq ct,$ it hold that
$$ct\leq  |x|\leq |x-2\sqrt{s}z|+2\sqrt{s}|z|\leq |x-2\sqrt{s}z|+2R^{2}. $$
Which implies that
\begin{equation}\label{T4}
ct-2R^{2}\leq |x-2\sqrt{s}z|
\end{equation}
for every $(z,s)\in B(0,R)\times [0, R]$  and $|x|\geq ct.$ But
\begin{equation} \label{T5}
\tilde{c}t\leq ct-2R^{2}\Leftrightarrow t\geq \frac{2R^{2}}{c-\tilde{c}}.
\end{equation}
 Hence combining inequalities \eqref{T3},\eqref{T4} and \eqref{T5} we have that
 \begin{equation}\label{T6}
 \sup_{(z,s)\in B(0,R)\times[0, R] }u(x-2\sqrt{s}z,t)\leq \varepsilon
 \end{equation}
 for $|x|\geq ct$ and $t\geq \max\{t_{1},\ \frac{2R^2}{c-\tilde{c}}\}.$ This implies that
\begin{equation}\label{T7}
\frac{1}{\pi^{\frac{N}{2}}}\int\int_{\{s\leq R, |z|\leq R\}}e^{-s}e^{-|z|^{2}}u(x-2\sqrt{s}z,t)dzds<\frac{\varepsilon}{2}
\end{equation}for $|x|\geq ct$ and $t\geq \max\{t_{1},\ \frac{2R^2}{c-\tilde{c}}\}.$
From \eqref{T1} and inequalities \eqref{T2} and \eqref{T7}, it follows that
\begin{equation*}
\lim_{t\rightarrow\infty}\sup_{|x|\geq ct}v(x,t)=0
\end{equation*}
for every $c>c^{\ast}_{up}.$
\end{proof}

We now prove Theorem \ref{Main Theorem 4}.

\begin{proof}[Proof of Theorem \ref{Main Theorem 4}]
(1) From Lemma \ref{MainLemma4} we know that
$$
\liminf_{t\rightarrow\infty}\inf_{|x|\leq ct}u(x,t)>0
$$
for all $0\leq c < c^{\ast}.$ We first prove that \eqref{A_Eq8} holds for $0\le c<c^*$, where $c^*$ is as in Lemma \ref{MainLemma4}.

 Assume that there are constants $0\leq c< c^{\ast},$ $\delta>0$ and a sequence $\{(x_{n},t_{n})\}_{n\in\mathbb{N}}$ such $t_{n}\rightarrow \infty,\ \|x_{n}\|\leq ct_{n} $ and
\begin{equation}\label{aux-eq1}
|u(x_{n},t_{n})-\frac{a}{b}|\geq \delta, \quad \ \ \forall\ n\geq 1.
\end{equation}
For every $n\geq 1,$ let us define
\begin{equation*}
u_{n}(x,t)=u(x+x_{n},t+t_{n}) , \quad \text{and} \quad v_{n}(x,t)=v(x+x_{n},t+t_{n})
\end{equation*}
for every $x\in\R^N,\ t\geq -t_{n}.$ Choose $0<\alpha<\frac{1}{2}.$
Using the facts that
$$
u(\cdot,t)=T(t)u_{0}-\chi\int_{0}^{t}T(t-s)\nabla(u(s)\nabla v(s))ds+\int_{0}^{t}T(t-s)(au(s)-bu^{2}(s))ds.
$$
For every $n\geq 1,$ we have
\begin{eqnarray*}
\|u_{n0}\|_{X^{\alpha}}&\leq & \|T(t_{n})u_{0}(.+x_{n})\|_{X^{\alpha}}+\chi\int_{0}^{t_n}\|T(t_n-s)\nabla(u(s)\nabla v(s))\|_{X^{\alpha}}ds\nonumber\\
&\quad & +\int_{0}^{t_n}\|T(t_n-s)(au(s)-bu^{2}(s))\|_{X^{\alpha}}ds\nonumber\\
&\leq & C_{\alpha}t_{n}^{-\alpha}\|u_{0}\|_{\infty}+\chi\int_{0}^{t_n}\|T(t_n-s)\nabla(u(s)\nabla v(s))\|_{X^{\alpha}}ds \nonumber\\
&\quad & + C_{\alpha}\int_{0}^{t_n} e^{-(t_n-s)}(t_n-s)^{-
\alpha}\|au(s)-bu^{2}(s))\|_{\infty}ds.
\end{eqnarray*}
Next, using Lemma \ref{L_Infty bound 2}, the last inequality can be improved as
\begin{eqnarray}\label{L01}
\|u_{n0}\|_{X^{\alpha}}
&\leq & C_{\alpha}t_{n}^{-\alpha}\|u_{0}\|_{\infty}+C_{\alpha}\chi\int_{0}^{t_n}e^{-(t_n-s)}(t_n-s)^{-\frac{1}{2}-\alpha}\|u(s)\nabla v(s)\|_{\infty}ds \nonumber\\
&+ & C_{\alpha}\int_{0}^{t_n} e^{-(t_n-s)}(t_n-s)^{-
\alpha}\|au(s)-bu^{2}(s))\|_{\infty}ds.
\end{eqnarray}
Combining inequality \eqref{A_EE001} and the fact that  $\sup_{t}\|u(\cdot,t)\|_{\infty}<\infty$, inequality \eqref{L01} becomes,
\begin{eqnarray}\label{L02}
\|u_{n0}\|_{X^{\alpha}}
&\leq & C_{\alpha}t_{n}^{-\alpha}\|u_{0}\|_{\infty} + {C}\left(\underbrace{\int_{0}^{\infty}e^{-s}s^{-\frac{1}{2}-\alpha} ds}_{\Gamma(\frac{1}{2}-\alpha)} +\underbrace{\int_{0}^{\infty}e^{-s}s^{-\alpha}ds}_{\Gamma(1-\alpha)}\right)\nonumber\\
&= & C_{\alpha}t_{n}^{-\alpha}\|u_{0}\|_{\infty} + {C}
\end{eqnarray}
Since $t_n\rightarrow\infty$ as $n\rightarrow\infty,$ then $\sup_{n}\|u_{n0}\|_{X^{\alpha}}<\infty.$ Furthermore, similar arguments as in the proof  of Theorem \ref{Local existence1} show that the functions $u_{n}: [-T, T]\rightarrow X^{\alpha}$ are equicontinuous for every $T>0$. Hence, Arzela-Ascoli's Theorem and Theorem 15 (page 80 of \cite{Amann}) imply that there is a function $(\tilde{u},\tilde{v})\in C^{2,1}(\R^N\times\R)$ and  a subsequence $\{(u_{n'},v_{n'})\}_{n}$ of $\{(u_{n},v_{n})\}_{n}$  such that $(u_{n'},v_{n'})\rightarrow (\tilde{u},\tilde{v})$ in $C^{1+\delta',\delta'}_{loc}(\R^{N}\times(-\infty, \infty))$ for some $\delta'>0.$ Moreover, $\tilde{v}=(I-\Delta)^{-1}\tilde{u}$ and $(\tilde{u},\tilde{v})$ solves \eqref{IntroEq1} in classical sense. Note that
$$
\tilde{u}(x,t)=\lim_{n\rightarrow \infty}u(x+x_{n'},t+ t_{n'})
$$
for every $x\in\R^N,\ t\in\R.$ Next, choose $\tilde{c}\in (c, c^{\ast}).$ For every $x\in\R^N$ and $t\in \R,$ we have
\begin{eqnarray*}
\|x+x_{n'}\| & \leq & \|x\|+\|x_{n'}\|\leq  \|x\|+ ct_{n'} \nonumber\\
            & = &    \tilde{c}(t_{n'}+t)- (\tilde{c}-c)(t_{n'}-\frac{\|x\|-\tilde{c}t}{\tilde{c}-c}) \leq   \tilde{c}(t_{n'}+t)
\end{eqnarray*}
whenever $t_{n'}\geq \frac{\|x\|+\tilde{c}|t|}{\tilde{c}-c}.$ Thus, it follows that
$$
\tilde{u}(x,t)=\lim_{n\rightarrow \infty}u(x+x_{n'},t+t_{n'})\geq \liminf_{s\rightarrow \infty}\inf_{\|y\|\leq \tilde{c}s}u(y,s)>0
$$ for every $(x,t)\in\R^N\times\R.$  Hence $\inf_{(x,t)\in\R^N\times\R}\tilde{u}(x,t)>0.$
Using Theorem \ref{Main Theorem 3}, we must have
\begin{equation}\label{Q0}
\lim_{t\rightarrow\infty}\|\tilde{u}(\cdot ,t)-\frac{a}{b}\|_{\infty}=0.
\end{equation}

\textbf{Claim.} $\tilde{u}(x,t)=\frac{a}{b}$ for every $(x,t)\in\R^{N+1}.$

The proof of this claim is inspired from the ideas used  to prove Theorem \ref{Main Theorem 3}. Let us set $\underline{u}_{0}=\inf_{(x,t)\in\R^{N+1}}\tilde{u}$ and $\overline{u}_{0}=\sup_{(x,t)\in\R^{N+1}}\tilde{u}.$ Since $(\tilde u, \tilde v)$
 solves \eqref{IntroEq1} in the classical sense and $b>2\chi,$ it follows from Lemma \ref{Asymp1Lem0} that
 \begin{equation}\label{Q1}
 a-\chi\underline {u}_0\geq a-\chi \overline{u}_0>0.
 \end{equation}
 For every $t_{0}\in \R,$ let $\underline{u}(\cdot,t_{0})$ and $\overline{u}(\cdot,t_{0})$ be the solutions of
 $$
 \begin{cases}
 \frac{d}{dt}\overline{u}=\overline{u}(a-\chi\underline{u}_{0}-(b-\chi)\overline{u}),\ \ t>t_0\\
 \overline{u}(t_{0},t_{0})=\overline{u}_{0}
 \end{cases}
 $$
 and
 $$
 \begin{cases}
 \frac{d}{dt}\underline{u}=\underline{u}(a-\chi\overline{u}_{0}-(b-\chi)\underline{u}),\ \ t>t_0\\
 \underline{u}(t_{0},t_{0})=\underline{u}_{0},
 \end{cases}
 $$
 respectively.
 Since $0<\underline{u}_{0}\leq \tilde{v}(x,t)\leq \overline{u}_{0}$ for every $(x,t)\in\R^{N+1},$ following the same arguments used to prove Lemma \ref{Asymp1Lem1}, we obtain that
 \begin{equation}\label{Q3}
 \underline{u}(t-t_0,0)=\underline{u}(t,t_{0})\leq \tilde{u}(x,t)\leq\overline{u}(t,t_{0})=\overline{u}(t-t_0,0)\ \ \forall\ x\in\R^N,\ t\geq t_{0}.
 \end{equation}
 By Lemma \ref{global-existence-lemma}, we have
 \begin{equation}\label{Q5}
\lim_{t_0\to -\infty}\overline {u}(t-t_0,0)=\frac{a-\chi\underline{u}_{0}}{b-\chi}\quad \text{and}\quad  \lim_{t_0\to -\infty} \underline{u}(t-t_0,0)=\frac{a-\chi\overline{u}_{0}}{b-\chi}.
\end{equation}
Combining \eqref{Q3} and \eqref{Q5} we obtain that
\begin{equation*}
\frac{a-\chi\overline{u}_{0}}{b-\chi}\leq \tilde{u}(x,t)\leq \frac{a-\chi\underline{u}_{0}}{b-\chi},\ \ \ \forall\ \ (x,t)\in \R^{N+1}.
\end{equation*}
This together with $\underline{u}_{0}\leq \tilde{u}(x,t)\leq \overline{u}_{0}$ implies that
\begin{equation*}
a-\chi\overline{u}_{0}\leq (b-\chi)\underline{u}_{0} \quad \text{and} \quad (b-\chi)\overline{u}_{0}\leq a-\chi\underline{u}_{0}.
\end{equation*}
These last inequalities are exactly  the ones established in Lemma \ref{Asymp1Lem1}. Therefore, following the arguments of Theorem \ref{Main Theorem 3}, we obtain  that $\underline{u}_{0}=\overline{u}_{0}=\frac{a}{b}$. This complete the proof of the claim.\\
It follows from above that $\tilde{u}(0,0)=\frac{a}{b}.$ But by \eqref{aux-eq1},
 $$
 |\tilde u(0,0)-\frac{a}{b}|\ge \delta,
 $$
 which is a contradiction. Thus
\[
\lim_{t\rightarrow\infty}\sup_{|x|\leq ct}|u(x,t)-\frac{a}{b}|=0
\]
 for all $0\leq c< c^{\ast}.$  This  together with Lemma \ref{Asym of v} (i) implies \eqref{A_Eq8} with any $0<c_{\rm low}^*\le c^*$.

(2) We prove \eqref{A_Eq9}.   Let us denote by $\overline{U}(x,t)$ the classical solution of the Initial Value Problem
\begin{equation*}
\begin{cases}
L\overline{U}= \bar F^{2}(\overline{U}) \ \ \ x\in \R^N, \ t>0\nonumber\\
\overline{U}(x,0)=u_{0}(x)\ \ \ x\in \R^N
\end{cases}
\end{equation*}
where $L\overline{U}$ is given by \eqref{Definition of L} and $\bar F^2(u)=u(a+d-(b-\chi)u)$ where $d\gg 1$ is chosen such that $\|u_{0}\|_{\infty}<\frac{a+d}{b-\chi}$. By the comparison principle for parabolic equations,
$$
u(x,t)\leq \overline{U}(x,t)\leq U(t,\|u_{0}\|_{\infty})
$$
for all $x\in\R^{N}$ and $t\geq 0$.

 It follows from \eqref{A_Eq02} and the fact that $v\ge0$  that
$$
 \lim_{R\rightarrow \infty }\sup_{t>R, |x|>R}\left(  4\partial_{u}\bar F^{2}(0)-\chi^{2}\|\nabla v(x,t)\|^{2} \right) \geq
\lim_{R\rightarrow \infty }\sup_{t>R, |x|>R}\left(4\partial_{u}F^{1}(x,t,0)-\chi^{2}\|\nabla v(x,t)\|^{2} \right) >0.
$$

Since $ \bar F^{2} $ is of KPP type with $F^{2}(0)=F^{2}(\frac{a+d}{b-\chi})=0,$ by Theorem 1 in \cite{Henri1},  there exist two  compact sets $  {\underline{S}}\subset{\overline{S}}$ with non-empty interiors such that
\begin{equation*}
\begin{cases}
\text{for all compact set } K\subset \text{int}\underline{S}, \ \lim_{t\rightarrow\infty}\{ \sup_{x\in tK}|\overline{U}(x,t)- \frac{a+d}{b-\chi}|\}=0,\\
\text{for all closed set } F\subset  \mathbb{R}^{N}\setminus\overline{S}, \ \lim_{t\rightarrow\infty}\{ \sup_{x\in tF}|\overline{U}(x,t)|\}=0.
\end{cases}
\end{equation*}
Take $c_{\rm up}^{\ast}$ to be the diameter of $\overline{S}.$ For every $c>c_{\rm up}^{\ast}$ we have that $F:=\{x \ : \ |x|\geq c\}\subset\R^{N}\setminus\overline{S}$ and closed. Hence
\[
\lim_{t\rightarrow \infty}\sup_{|x|>ct}u(x,t)=0
\] whenever $c > c^{\ast}_{\rm up}.$
This together with  Lemma \ref{Asym of v} implies \eqref{A_Eq9}. By \eqref{A_Eq8} and \eqref{A_Eq9}, it is clear that $c_{\rm up}^*(u_0)\ge c_{\rm low}^*(u_0)$.
\end{proof}

\end{document}